\newtheorem{lemma}{Lemma}[section]
\newtheorem{proposition}[lemma]{Proposition}
\newtheorem{corollary}[lemma]{Corollary}
\newtheorem{theorem}[lemma]{Theorem}
\newtheorem{definition}[lemma]{Definition}
\renewcommand{\imath}{\mathrm{i}}
\newcommand*{\doublenabla}{%
  \nabla\mkern-12mu\nabla
}   
\newcommand{\CG}{\hbox{{$\mathcal G$}}}
\newcommand{\CJ}{\hbox{{$\mathcal J$}}}
\newcommand{\C}{\mathbb{C}}
\newcommand{\R}{\mathbb{R}}
\newcommand{\Hom}{\mathrm{Hom}}
\newcommand{\del}{\partial}
\newcommand{\extd}{\mathrm{d}}
\newcommand{\isom}{{\cong}}
\newcommand{\tens}{\mathop{{\otimes}}}
\newcommand{\id}{\mathrm{id}}
\newcommand{\im}{\mathrm{im}}
\newcommand{\<}{\langle}
\renewcommand{\>}{\rangle}
\begin{document}

\author{Shahn Majid and Francisco Sim\~{a}o }
\address{School of Mathematical Sciences\\ Queen Mary University of London \\ Mile End Rd, London E1 4NS }
\email{s.majid@qmul.ac.uk, f.castelasimao@qmul.ac.uk}
\thanks{Ver.2}

\title{Quantum Jet Bundles}
	\begin{abstract} We formulate a notion of jet bundles over a possibly noncommutative algebra $A$ equipped with a torsion free connection. Among the conditions needed for 3rd-order jets and above is that the connection also be flat and its `generalised braiding tensor' $\sigma:\Omega^1\tens_A\Omega^1\to \Omega^1\tens_A
	\Omega^1$ obey the Yang-Baxter equation or braid relations. We also cover the case of jet bundles of a given `vector bundle' over $A$ in the form of a bimodule $E$ with a flat bimodule connection with its braiding $\sigma_E$ obeying the coloured braid relations. Examples include the permutation group $S_3$ with its 2-cycles calculus, $M_2(\C)$, the bicrossproduct model quantum spacetime in two dimensions and $\C_q[SL_2]$ for $q$ a 4th root of unity. \end{abstract}

\keywords{noncommutative geometry, quantum Riemannian geometry, differential graded algebra, Yang-Baxter equation, braided Hopf algebra, quantum symmetric algebra, braided symmetric, quantum spacetime, quantum gravity}
\maketitle
\section{Introduction}
\label{sec:Intro}

Noncommutative geometry is the idea that geometric constructions can be extended to the case where the `coordinate algebra' $A$ is potentially noncommutative. Originally motivated by the quantisation of phase spaces in quantum mechanics, a more recent motivation is the hypothesis that spacetime itself is better modelled by noncommutative coordinates due to quantum gravity effects. The most well-established approach, coming out  of operator algebras, the Gelfand-Naimark theorem and cyclic cohomology is that of A. Connes~\cite{Con}. Since the 1980s there have also emerged more algebraic-geometry like approaches such as \cite{Sta}, as well as a constructive approach motivated by, but not limited to, the differential geometry of quantum groups and models of quantum spacetime as in \cite{BegMa}. In this work we use this last approach, ensuring good contact with examples in the literature. The starting point here is a differential graded algebra (DGA) $(\Omega,\extd)$ with $A$ appearing as degree 0 (in fact we will only need this to the degree 2 or 2-form level) and a bimodule connection $\nabla:\Omega^1\to \Omega^1\tens_A\Omega^1$, $\sigma:\Omega^1\tens_A\Omega^1\to \Omega^1\tens_A\Omega^1$. More generally, one can also consider other vector bundles via their sections as bimodules $E$, and bimodule connections $\nabla_E$ on them. In the bimodule setting, one has both left and right Leibniz rules and referring the latter back to the left requires~\cite{DVM,Mou} the existence of a bimodule `generalised braiding' map $\sigma_E:E\tens_A\Omega^1\to \Omega^1\tens_AE$. Details are recalled in the preliminary Section~\ref{secpre}.

The problem we partially solve using such data is a long-standing one in noncommutative geometry, namely to find a reasonable notion of `jet bundle' over a noncommutative algebra $A$. Jet bundles underly the mathematical formulation of Lagrangian field theory, Noether theorems and the Euler-Lagrange equations in physics, hence quantum jet bundles would be a critical first step down the road of physics on quantum spacetime. The mathematical problem is, moreover, clear enough for first order jet bundles where classical algebraic geometry already suggests that it is natural to take $\CJ^1_A=A\oplus \Omega^1$ as explained in \cite{LakTho}. This previous work, however, used a notion of `balanced derivations' in the noncommutative case to define jet bundles (in fact a `jet algebra') whereas we insist on a standard DGA as our main input data. Our new proposal for jets of all orders is to first define the sub-bimodule 
\[ \Omega_S^k\subseteq \Omega^1\tens_A\cdots\tens_A\Omega^1\]
  ($k$ copies) defined as the joint kernel of the wedge products between adjacent copies. Then $\CJ^k_A\coloneq \oplus_{j=0}^k\Omega_S^j$ with the underlying bimodule structure inherited from $\Omega^1$. This much, i.e. the jet bundle itself, depends only on the DGA. However,  we also need a `jet prolongation' linear map $j^k\colon A\to \CJ^k_A$ which classically expands a function into its Taylor series to degree $k$ and a second product $\bullet_k$ by $A$ (classically taken from the left or the right on sections of the jet bundle, but in our case as a bimodule with $\bullet_k$ on both sides) such that $j^k$ is a bimodule map. We then have a limit $\CJ_A^\infty$ as the limit  of an increasing sequence of $A$-bimodules
\[
\begin{tikzcd}
 & & {\ \atop \displaystyle\cdots}  & & & A  \arrow[dllll,"j^k",swap,bend right=17] \arrow[dlll,"j^{k-1}",swap, bend right=7]     \arrow[dll, draw=none, swap,"\dots"]\arrow[dl,"j^1",swap] \arrow[d,"\id"]\\
 \cdots \arrow[r] & \CJ^k_A \arrow[r] & \CJ^{k-1}_A \arrow[r] & \cdots\arrow[r] & \CJ^1_A\arrow[r] & \CJ^0_A=A
\end{tikzcd}
\]
with the bullet product in each degree and commuting with the  $j^k$ maps. This occupies most of the paper, with Section~\ref{sec:JA123} for $k=1,2,3$ and Section~\ref{sec:JAgeneral} for general $k$. The low degree cases are developed first in explicit detail so that we can see the required quantum geometric data progressively emerging. They also provide the template for the general case with no further conditions on the geometric data needed for $k>3$. By default, $\CJ^k_A$ refers to the $\bullet_k$ structure on both sides, but our construction respects the underlying inherited  bimodule structure by which $\CJ^k_A$ is regarded as a bundle, with the result that one also has a bimodule in the two mixed cases where we use the bullet product from one side and the inherited bundle product from the other. The $j^k$ maps mean that we are constructing {\em split} jet bundles, but these are the relevant ones for physics since, classically, the splitting is needed to define the contact forms  for the variational double complex \cite{And}. Here  $j^k$ splits the surjection $\CJ^k_A\to A$ going along the bottom.
  
 The further geometric data needed beyond a DGA is  as follows. For the bullet products $\bullet_k$ we need a bimodule map $\sigma:\Omega^1\tens_A\Omega^1\to \Omega^1\tens_A\Omega^1$ and for the higher degree theory this should obey the braid relations in the monoidal category of $A$-bimodules, which is an innovative feature of the present work, and also be compatible with the wedge product. Classically, this $\sigma$ would just be the `flip map' and would not appear as additional data. In the split case that we are interested in, i.e. for the jet prolongation maps, we ask that these $\sigma$ arise as part of a torsion-free bimodule connection $\nabla,\sigma$, and for $k\ge 3$ this should moreover be flat. Requiring the existence of a connection with nice properties is not without precedent in the constructive approach to noncommutative geometry, as a proxy for non-existent local coordinates. For example, a bimodule equipped with a flat  bimodule connection can be seen as an algebraic model of  a sheaf \cite[Chap. 4]{BegMa} and similarly a background connection is used in the construction of an algebra of differential operators in \cite[Chap. 6]{BegMa}, which classically would be dual to the jet bundle. Likewise, the Hopf-Galois or `local triviality' property of a quantum principal bundle can be expressed as existence of a connection on the principal bundle. Moreover,  one should {\em not  expect} that the jet bundle {\em and} prolongation maps can all be constructed from the DGA alone. As already clear in Connes approach to noncommutative geometry, the DGA alone is not enough to recover a manifold in the commutative case, one needs additional structure (in Connes case this is provided by a spectral triple or `Dirac operator', which entails a DGA but contains much more information).  Nevertheless, the requirement for $k\ge 3$ of a flat torsion free connection is a significant restriction of the present work, corresponding in the classical case to an affine structure of some kind \cite{Bok}. It also means classically that the vector fields form a Vinberg or pre-Lie algebra.  This assumption constitutes therefore an interesting class of `nice' quantum manifolds which deserve more study and to which the present paper is initially restricted. 
 
 This is not the end of the story and it should be possible to drop the flatness assumption in a future extension of the theory (this is discussed further in the conclusions Section~\ref{seccon}). Moreover, for $\CJ^1_A$, the construction does not depend on $\nabla,\sigma$ up to isomorphism, while for $\CJ^2_A$ it depends up to isomorphism only on $\sigma$ which classically would be the flip map and not additional data. These results are proven in Section~\ref{sec:JA123}. For higher $k$ we are not aware of an isomorphism if we want to remain compatible with the prolongation maps but otherwise, as noted, the construction of $\CJ^k_A$ as a bimodule depends only on $\sigma$ with various compatibility properties rather than on $\nabla$ directly, the latter only being necessary for the jet prolongation maps $j^k$. It is also of interest that the latter  enter through the `braided integer' maps $[n,\sigma]$ and associated braided binomials previously used in the theory of braided-linear spaces (such as the quantum plane) as Hopf algebras in a braided category \cite[Chap.~10]{Ma:book}. In our case, they will be key to a certain higher-order derivation property in Lemma~\ref{lem:nabla3leib} and its generalisation Lemma~\ref{lem:leibrule}. Braided techniques also provide a certain `braided shuffle product' on the space of symmetric cotensors making this into an algebra $\Omega_S$ used in the jet bundle construction. In some cases, we also have a  `reduced jet bundle' similarly built on the braided-symmetric algebra $S_\sigma(\Omega^1)$ as explained in Section~\ref{secSsigma}.

 Although not the main part of the work, we also cover the case of jet bundles $\CJ^k_E$ relevant to a vector bundle (viewed as a bimodule of sections) over $A$, in Section~\ref{secJE}. Here the order 1 case is a noncommutative version of the Atiyah exact sequence \cite{Ati}, with
\[ 0\to \Omega^1\tens_A E\to \CJ^1_E\to E\to 0\]
with bullet actions built from a given bimodule map $\sigma_E$. As before, this  is not additional data in the classical case as it would just be the flip map. We will see that splittings $j^1_E\colon E\to \CJ^1_E$ of this sequence are then in 1-1 correspondence with bimodule connections $\nabla_E$ with generalised braiding the given $\sigma_E$.  Moreover, Lemma~\ref{lemJ1Eisom} shows that different choices of $\nabla_E,\sigma_E$ then give isomorphic split Atiyah sequences. For higher jets, we only consider split jet bundles where the jet prolongation map $j^k_E\colon E\to \CJ^k_E$ is part of the construction  and depends on a choice of $\nabla_E$ similarly to the above.

Section~\ref{secex} addresses the important task of showing that our construction is compatible with several standard examples of quantum differential geometries as in \cite{BegMa}. These include $M_2(\C)$ the algebra of $2\times 2$ matrices regarded as a `noncommutative coordinate algebra' with its standard 2-dimensional $\Omega^1$, the commutative algebra $\C(S_3)$ of functions on the permutation group of 3 elements with its standard 3-dimensional  $\Omega^1$ (here 1-forms do not commute with functions so we are still doing noncommutative geometry), the 2-dimensional bicrossproduct model Minkowski spacetime with relations $[r,t]=\lambda r$ and its standard 2-dimensional $\Omega^1$ and the quantum group $\C_q[SL_2]$ at $q$ a fourth root of unity with its standard 3-dimensional $\Omega^1$. A striking observation in most of these examples is that by the time we have imposed that $\nabla$ is torsion free, flat and obeys the braid relations for its $\sigma$, the other conditions needed for our theory -- extendability as in \cite{BegMa:bia}, $\wedge$-compatibility \cite[Chap.~8.1]{BegMa}, and the new notion introduced now of `Leibniz compatibility' in Lemma~\ref{lem:nabla3leib}, all hold automatically.

Before turning to the noncommutative constructions, we briefly remind the reader of the key ideas, for orientation purposes only, in the classical theory of jet bundles.  There are many works with more details and we refer, for example, to \cite{ForRom, MusHro}, where the former also contains their use in Lagrangian field theory. If $E\to M$ is a vector bundle (for this classical discussion we denote its sections by $\Gamma(E)$, not simply by $E$ as above), the jet bundle of degree up to order $k$ is a certain affine bundle $J_E^k\to E$, which we also regard as a bundle over $M$ via the projection $E\to M$, together with a map $j^k_E\colon \Gamma(E)\to \Gamma(J^k_E)$  that, roughly speaking, provides the Taylor series expansion of $s\in \Gamma(E)$ at each point of $M$. In physics the sections $\Gamma(E)$ would typically be  `matter fields' of which the simplest `scalar field' case is provided by taking $E$ trivial with fibre $\mathbb{R}$ or $\mathbb{C}$, so that $\Gamma(E)=C^\infty(M)$ is the space of (real or complex valued) functions on the manifold.  Less well-known, but clear by the end of our algebraic version (as discussed in Section~\ref{seccon}) is that in the scalar field case $\Gamma(J^k)$ is just the space of symmetric tensors of degree $(0,i)$ for $i\le k$, i.e symmetric tensor powers of the bundle of 1-forms \cite{Saunders}. This means that we can loosely think of the colimit of the $\Gamma(J^k)$ as something like the algebra $C^\infty_{poly}(TM)$ of functions on the tangent bundle polynomial in the fibre direction, but a lot bigger in allowing powerseries. Now, given $s\in C^\infty(M)$, the jet prolongation map to a function on $TM$ is via the Taylor series at each point of $M$,
\[ s\mapsto (s, \del_i s,\del_i\del_j s,\cdots),\]
where $\del_i$ are partial derivatives in a local coordinate chart dual to local 1-forms $\extd x^i$. Let $v^i$ correspondingly parametrize the tangent space $T_xM$ (e.g. a vector field would have the form $v^i(x)\del_i$, but in our case we consider only one point $x\in M$). Then the Taylor expansion provides functions $v^i\del_is, v^iv^j\del_i\del_j s$ etc evaluated at $x$, as functions of increasing degree on $T_xM$. It is this construction which in an algebraic form we extend to noncommutative geometry in Sections \ref{sec:JA123}, \ref{sec:JAgeneral}.

\subsection*{Acknowledgements}

We thank B. Noohi and the authors of \cite{FMW} for helpful comments on our first arXiv preprint version in relation to \cite{Ati} and to connection-independence. The work \cite{FMW} appeared some months later and provides an interesting more general categorical construction as `jet endofunctors'. 

\section{Preliminaries}\label{secpre}

The general results in the paper work over an field $k$. All algebras are assumed unital. Here we recall the elements we need from noncommutative geometry in the constructive bimodule-based approach. More details are in \cite{BegMa} and in the wider literature cited therein.

By a first order differential algebra, we mean $A$ equipped with an $A$-bimodule $\Omega^1$ (so the left and right actions commute) together with a bimodule derivation $\extd\colon A\to \Omega^1$, meaning that it satisfies the Leibniz rule $\extd(ab)=a\extd b+ (\extd a)b$ for all $a,b\in A$. We also ask that $\Omega^1$ is spanned by elements of the form $a\extd b$ for $a,b\in A$. We generally ask this to extend to a differential graded algebra $(\Omega,\extd)$, where $\Omega$ is a graded algebra and $\extd$ increases degree by 1 and obeys a graded Leibniz rule. Such an extension always exists but the universal such (called the `maximal prolongation') is usually much bigger than desired to match the classical limit and one may specify a quotient of it by further relations. We also require surjectivity so that $\Omega$ is generated by $A$, $\Omega^1$ or equivalently by $A$, $\extd A$. Its product among elements of degree bigger than 0 is denoted by $\wedge$.

If $E$ is a left $A$-module, a left connection is defined as $\nabla_E:E\to \Omega^1\tens_A E$ obeying $\nabla_E(ae)=\extd a\tens e+a\nabla_E e$ for all $e\in E$ and $a\in A$. Here the left output can be evaluated against a `vector field' right module map $X:\Omega^1\to A$ to define a `covariant derivative' along $X$, but we do not need to do that here. When $E$ is a bimodule we want a compatible Leibniz rule for right products and the framework we adopt is that of a left {\em bimodule connection} \cite{DVM,Mou}
\[ \nabla_E(ea)=(\nabla_Ee)a+ \sigma_E(e\tens\extd a)\]
for some bimodule map $\sigma_E:E\tens_A\Omega^1\to \Omega^1\tens_A E$ called the `generalised braiding'. The latter is needed to bring $\extd a\in \Omega^1$ to the left to be consistent with the other terms. This map is not, however, additional data since, if it exists, it is uniquely determined by the above; being a bimodule connection is a property that a left connection on a bimodule can have. The collection of bimodules equipped with bimodule connections has a monoidal category structure, where the tensor product of $(E,\nabla_E)$ and $(F,\nabla_F)$ is built on $E\tens_A F$ with
\[ \nabla_{E\tens F}=\nabla_E\tens\id+(\sigma_E\tens\id)(\id\tens\nabla_F)\]
with $\sigma_E$ used to bring the $\Omega^1$ part of $\nabla_F$ to the left to be consistent with other terms. The corresponding generalised braiding is $\sigma_{E\tens F} = (\sigma_E \tens \id)(\id \tens \sigma_F)$ \cite{BegMa}. We do not usually write the $\tens_A$ with the subscript in the context of elements or morphisms (to avoid notational clutter), but it should be understood. A particularly nice additional property that one can require of a bimodule connection is the notion of extendability of $\sigma_E$ to a map $E\tens_A\Omega^2\to\Omega^2\tens_AE$ in the obvious way \cite[Chap 4]{BegMa}.

The curvature of any left connection is defined as
\[ R_E:E\to \Omega^2\tens_A E,\quad R_E=(\extd\tens \id- \id\wedge\nabla_E)\nabla_E\]
and is a left-module map. When $E=\Omega^1$ itself, we simply denote the bimodule connection $\nabla$ and its `generalised braiding' $\sigma$. In this case we say that $\nabla$ is torsion-free if the torsion tensor
\[ T_\nabla:\Omega^1\to \Omega^2,\quad T_\nabla=\wedge\nabla-\extd\]
vanishes, in which case it can be shown that $\wedge\circ(\id+\sigma)=0$. We will not need a quantum metric in the present paper, but for context this is defined as $g\in \Omega^1\tens_A\Omega^1$ which is nondegenerate in a suitable sense. The strongest sense is the existence of a bimodule inverse $(\ ,\ ):\Omega^1\tens_A\Omega^1\to A$ making $\Omega^1$ its own left and right dual in the monoidal category of $A$-bimodules. One usually asks for $g$ to be `quantum symmetric' in the sense $\wedge g=0$ and this is one of the motivations behind our formulation of a space of `quantum symmetric $k$-forms' $\Omega^k_S$ in Sections~\ref{sec:JA123} and \ref{sec:JAgeneral}. A quantum Levi-Civita connection (QLC) for a given quantum metric is $\nabla$ which is torsion free and obeys $\nabla g=0$ using here the tensor product connection \cite{BegMa:gra,BegMa}. However, we do not assume a metric in general and the connection $\nabla$ has a different and more auxiliary role
as discussed in the Introduction.

Another ingredient we will need is the notion of a `braided integer'. These were introduced as part of the construction of certain Hopf algebras in braided categories \cite{Ma:book,Ma:hod}. In a braided category it is useful to denote morphisms as strings flowing down the page, tensor product of the category by omission \cite{Ma:alg}, and the braiding natural transformation $\psi=\includegraphics{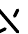}$ between two objects by a braid crossing. Then, if $V$ is an object of a braided category with braiding $\psi:V^{\tens 2}\to V^{\tens 2}$, we define morphisms $[n,\psi]:V^n\to V^n$ by
\[ \includegraphics[scale=0.6]{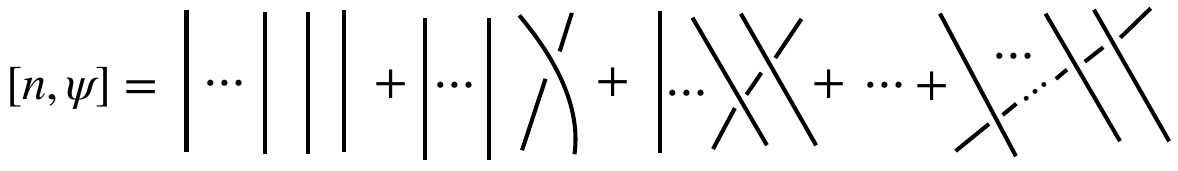}\]
as a generalisation of $q$-integers $[n]_q=1+q+\cdots+q^{n-1}$. We similarly define braided binomial morphisms $\left[{n\atop m},\psi\right]$ iteratively and recovering $[n,\psi]$ when $m=1$. When braided integers etc of different sizes are composed, we fill in with identity strands in a manner specified (usually from one side or the other).

In what follows, we will not employ braided category theory in a formal way, hence we have kept the above discussion minimal, but we will make use of string diagrams notations more generally. They should be regarded as no more than a visualisation device to denote the composition of maps, but generally strands in our case will begin and end on $A$-bimodules and juxtaposition denotes $\tens_A$. For example, the $\wedge$ product $\Omega^1\tens_A\Omega^1\to \Omega^2$ is denoted \includegraphics{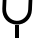} and is a morphism in the category of $A$-bimodules. The $\sigma$ and $\sigma_E$ maps are likewise morphisms (bimodule maps) in this category and these particular ones will be denoted as braid crossings \includegraphics{braid.pdf} as above. They do not necessarily obey braid relations but they can be composed and diagrams such as $[n,\sigma]$ are again bimodule maps. If $\sigma$ obeys the braid relations with tensor products over $A$ then it makes the subcategory generated by sums and tensor products of $\Omega^1$ onto a braided subcategory of the monoidal category of $A$-bimodules, but {\em a priori}, we do not assume this. It will also be convenient to denote a connection $\nabla$ or $\nabla_E$ the other way as a splitting \includegraphics{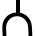}, but note that this is {\em not} a bimodule or even a left module map. Nor is $\extd: \Omega^1\to \Omega^2$. The use of diagrams here is still possible as in \cite{BegMa:bia,BegMa}, but one has to check that while each diagram in a sum of terms may not by itself be well-defined, the condition expressed when all terms are put on one side defines a well defined map on $\tens_A$ between the strands and typically at least a left-module map. The diagrammatic representation of zero torsion and curvature in Fig.~\ref{fig1}(a),(b) respectively, are examples.

\section{Jet bundles $\CJ^k_A$ over a noncommutative algebra to order 3}\label{sec:JA123}

In this section we construct jet bundles or more precisely their sections as `jet bimodule' $\CJ^k_A$ for $k\le 3$  over a potentially noncommutative algebra $A$ equipped with a differential graded algebra $(\Omega,\extd)$. These low order cases allow us to see explicitly how the progressively stronger geometric data emerge and also provide a template for the general case in Section~\ref{sec:JAgeneral}.   For $\CJ^2_A$, we will need to equip $\Omega^1$ with a torsion free connection $\nabla$ while for $\CJ^3_A$ and above, this will need to be flat and obey further conditions. In Section~\ref{secJE}, we will extend this construction further to jet bimodules over a given bimodule $E$ equipped with a flat bimodule connection $\nabla_E$. In physical terms, we restrict in the present section to `scalar fields' where $E=A$ and $\nabla_E=\extd$.

\subsection{First and second order jet bundles over a noncommutative algebra}
\label{secJA12}

We define the 1st-order jet bimodule and jet prolongation map as
\begin{align*}
    &\CJ^1_A = A \oplus \Omega^1,&
    &j^1(s) = s + \extd s,
\end{align*}
where $s\in A$ is prolonged to $j^1(s)\in\CJ^1_A$. Here, the first order derivatives of $s$ are encoded in $j^1(s)$ though $\extd s$. The left and right bullet actions of $a\in A$ on $\CJ^1_A$ are defined on each degree as
\begin{align*}
    &a\bullet_1 s = (a + \extd a) s,&
    &s\bullet_1a = s (a + \extd a),\\
    &a\bullet_1 \omega  = a \omega,&
    &\omega \bullet_1 a = \omega a,
\end{align*}
where $\omega\in\Omega^1$. Here and in general, we use $\bullet_k$ to denote the bimodule actions of $A$ on $\CJ^k_A$. It follows immediately from the Leibniz rule  that $\CJ^1_A$ becomes a bimodule and $j^1$ a bimodule map where $A$ is a bimodule by left and right action on itself. There is also clearly a bimodule map surjection $\pi_1:\CJ^1_A\to A$ given by projecting out the $\Omega^1$ component and such that $\pi_1 \circ j^1=\id$.

For $k=2$, we suppose that $\nabla\colon \Omega^1 \rightarrow \Omega^1 \tens_A \Omega^1$ is a torsion free bimodule connection on $\Omega^1$ with generalised braiding $\sigma\colon \Omega^1 \tens_A \Omega^1 \rightarrow \Omega^1 \tens_A \Omega^1$ and define the sub-bimodule of \emph{quantum symmetric 2-forms} as
\begin{align*}
    \Omega^2_S\coloneq \ker\wedge\subseteq \Omega^1\tens_A\Omega^1
\end{align*}
which classically corresponds to the symmetric rank $(0,2)$-tensors. We set
\begin{align*}
    &\CJ^2_A = A \oplus \Omega^1  \oplus \Omega^2_S,&
    &j^2(s) = j^1(s)+ \nabla \extd s.
\end{align*}
The usual formula for $j^2(s)$ in terms of symmetric 2nd partial derivatives appears in a central basis, see (\ref{jbasis}) later.  We define the left and right bullet actions of $a\in A$ on $\CJ^2_A$ as
\begin{align*}
    &a\bullet_2 s = a \bullet_1 s + (\nabla \extd a)s,&
    &s\bullet_1a = s \bullet_1 a + s \nabla \extd a,\\
    &a\bullet_2 \omega  = a \bullet_1 \omega + [2,\sigma] (\extd a \tens \omega),&
    &\omega \bullet_2 a = \omega \bullet_1 a + [2,\sigma] (\omega \tens \extd a),\\
    &a \bullet_2 \omega^1 \tens \omega^2 = a \omega^1 \tens \omega^2,&
    &\omega^1 \tens \omega^2 \bullet_2 a = \omega^1 \tens \omega^2 a
\end{align*}
for $s\in A$, $\omega \in \Omega^1$ and $\omega^1 \tens \omega^2 \in \Omega^2_S$. Here  $[2,\sigma] :=\id+\sigma$ is a `braided integer' bimodule map on $\Omega^{1\tens_A 2}$. Note also that $\bullet_2$ is the recursively defined via $\bullet_1$.

\begin{proposition}\label{propJ12} Let $\nabla$ be a torsion free bimodule connection on $\Omega^1$. Then $\CJ_A^2$ is an $A$-bimodule and $j^2:A\to \CJ^2_A$ is a bimodule map. Moreover,  $\pi_2:\CJ_A^2\to \CJ_A^1$ given by quotienting out $\Omega^2_S$ is a bimodule surjection with $\pi_2\circ j^2=j^1$. \end{proposition}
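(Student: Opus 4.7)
The plan is to verify the three assertions by direct computation, reducing everything to one key identity for $\nabla\extd$ that encodes how the bimodule connection $\nabla$ behaves on products.

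First I would check well-definedness of the bullet formulas, i.e.\ that the summands claimed to land in $\Omega^2_S$ actually do so. For $\nabla\extd a$, the torsion-free condition $\wedge\nabla=\extd$ together with $\extd^2=0$ gives $\wedge\nabla\extd a=0$, so $\nabla\extd a\in\ker\wedge=\Omega^2_S$. For the term $[2,\sigma](\extd a\tens\omega)=(\id+\sigma)(\extd a\tens\omega)$, the identity $\wedge\circ(\id+\sigma)=0$ (noted in the preliminaries as a consequence of torsion-freeness) shows the image lies in $\Omega^2_S$. Both maps are also bimodule maps on $\Omega^1\tens_A\Omega^1$, which will be used freely.

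The central computation is to establish, for all $a,b\in A$,
\[
\nabla\extd(ab) \;=\; (\nabla\extd a)\,b \;+\; [2,\sigma](\extd a\tens\extd b) \;+\; a\,\nabla\extd b.
\]
This follows by applying $\nabla$ to the Leibniz expansion $\extd(ab)=(\extd a)b+a\,\extd b$ and using both halves of the bimodule connection structure: the left Leibniz rule $\nabla(a\omega)=\extd a\tens\omega+a\nabla\omega$ handles the second summand, while the right Leibniz rule $\nabla(\omega a)=(\nabla\omega)a+\sigma(\omega\tens\extd a)$ applied to $(\extd a)b$ produces $(\nabla\extd a)b+\sigma(\extd a\tens\extd b)$; combining the two $\extd a\tens\extd b$ contributions yields the $[2,\sigma]$ term.

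With this identity in hand, I would verify the left-module axiom $a\bullet_2(b\bullet_2 x)=(ab)\bullet_2 x$ by splitting into the three cases according to the degree of $x$. For $x=s\in A$, the degree-$0$ and degree-$1$ components match by the ordinary bimodule structure of $A$ and the Leibniz rule for $\extd$, while the degree-$2$ component is exactly the key identity. For $x=\omega\in\Omega^1$, the degree-$1$ match is obvious and the degree-$2$ match rewrites $[2,\sigma](\extd a\tens b\omega)$ as $[2,\sigma]((\extd a)\cdot b\tens\omega)$ using the bimodule-map property of $[2,\sigma]$, then uses $\extd(ab)=(\extd a)b+a\,\extd b$ to recombine. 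For $x\in\Omega^2_S$, the axiom reduces to the inherited bimodule structure from $\Omega^1\tens_A\Omega^1$. The right action and the left--right compatibility follow by symmetric computations; again only the key identity and the two Leibniz rules are needed. The bimodule map property of $j^2$ is then immediate: expanding $j^2(as)=as+\extd(as)+\nabla\extd(as)$ via Leibniz for $\extd$ and via the key identity for $\nabla\extd$ reproduces the degree-by-degree expansion of $a\bullet_2 j^2(s)$, and similarly on the right. Finally, $\pi_2$ is a bimodule surjection because the definition of $\bullet_2$ agrees with $\bullet_1$ modulo $\Omega^2_S$, and $\pi_2\circ j^2=j^1$ holds by construction.

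I expect no real obstacle: the only substantive step is the key identity, and once that is in place every remaining verification is routine bookkeeping with the two Leibniz rules. The mild point to be careful about is to always remember that $\nabla$ itself is not a bimodule map, so the Leibniz rules must be invoked rather than any naive $A$-linearity, and that $[2,\sigma]$ is a bimodule map so expressions like $[2,\sigma](\extd a\tens b\omega)$ and $[2,\sigma]((\extd a)b\tens \omega)$ may be freely interchanged.
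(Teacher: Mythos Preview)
Your proposal is correct and takes essentially the same approach as the paper: both isolate the second-order Leibniz identity $\nabla\extd(ab)=(\nabla\extd a)b+[2,\sigma](\extd a\tens\extd b)+a\nabla\extd b$ as the key computation, then verify the bimodule axioms and the bimodule-map property of $j^2$ degree by degree in an inductive manner relative to $\bullet_1$. The only cosmetic difference is that the paper writes out the bimodule compatibility $(a\bullet_2 s)\bullet_2 b=a\bullet_2(s\bullet_2 b)$ explicitly on degree~$0$, whereas you defer it to ``symmetric computations''; this is harmless since, as you correctly observe, it again reduces to the key identity and the bimodule-map property of $[2,\sigma]$.
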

\begin{proof}
 Since we assume $\nabla$ to be torsion free, we have ${\rm im}(\id + \sigma) = {\rm im} [2,\sigma] \subseteq \ker \wedge = \Omega^2_S$, ensuring that the bullet action of $A$ on $\CJ^2_A$ indeed lands on $\CJ^2_A$. Before addressing the bimodule action axioms for $k=2$, it is helpful to note the 2nd-order Leibniz rule for $\nabla \extd$
\begin{align}
    \nabla \extd(ab) &= \nabla ((\extd a) b + a \extd b)
    = (\nabla \extd a) b + \sigma(\extd a \tens \extd b) + \extd a \tens \extd b + a\nabla \extd b\nonumber\\
    &= (\nabla \extd a) b + [2,\sigma] (\extd a \tens \extd b) + a\nabla \extd b.
    \label{eq:2ndOrderLeibniz}
\end{align}
It is also useful to consider these calculations in an inductive manner, using the results for $\bullet_1$ when computing the properties of $\bullet_2$. We again compute these degree by degree
\begin{align*}
    a \bullet_2 &(b \bullet_2 s)
    = a \bullet_2 (b \bullet_1 s)
    + a (\nabla \extd b)  s
    = a \bullet_1 (b \bullet_1 s)
    + (\nabla\extd a) bs + [2,\sigma] (\extd a \tens (\extd b) s) + a (\nabla \extd b)  s\\
    &=(ab) \bullet_1 s +( \nabla\extd(ab)) s
    = (ab)\bullet_2 s.
\end{align*}
In the second equality we wrote $b \bullet_1 s = bs + (\extd b) s$ and found extra terms that this picks up when acted upon by $a\bullet_2$. We then used (\ref{eq:2ndOrderLeibniz}) and that the generalised braiding $\sigma$ is a bimodule map to write $\sigma(\extd a \tens (\extd b) s) = \sigma(\extd a \tens \extd b) s$ to recognise the answer. The proof of the right action is strictly similar and omitted, while for a bimodule
\begin{align*}
  a \bullet_2 (s \bullet_2 b)
    &=a \bullet_2 (s \bullet_1 b)
    + a s \nabla\extd b
    = a \bullet_1 (s \bullet_1 b)
    + (\nabla \extd a) sb + (\id+\sigma) \extd a \tens s\extd b
    + a s \nabla\extd b\\
    &= (a \bullet_1 s) \bullet_2 b
    + ((\nabla \extd a)s) \bullet_2 b
    = (a \bullet_2 s) \bullet_2 b.
\end{align*}
Next, the additional term in $j^2$  lands in $\CJ^2_A = A\oplus \Omega^1 \oplus \Omega^2_S$ as $\nabla \extd s$ is quantum symmetric due to torsion freeness of the connection  so that $\wedge \nabla \extd s = \extd^2 s = 0$. The computations to show that $j^2$ is a bimodule map can again be done in an inductive manner using (\ref{eq:2ndOrderLeibniz}) as
\begin{align*}
    j^2(as)
    &= j^1(as) + \nabla\extd(as)
    = a \bullet_1 j^1(s) + (\nabla\extd a) s + [2,\sigma](\extd a \tens \extd s) + a\nabla\extd s\\
    &= a \bullet_2 j^1(s) + a\bullet_2\nabla\extd s
    = a\bullet_2 j^2(s),\\
    j^2(sa)
    &= j^1(sa) + \nabla(sa)
    = j^1(s) \bullet_1 a
    + (\nabla\extd s) a + [2,\sigma](\extd s \tens \extd a) + s\nabla\extd a\\
    &= j^1(s) \bullet_2 a + (\nabla\extd s) \bullet_2 a
    = j^2(s) \bullet_2 a.
\end{align*}
The projection and its stated properties are clear. It also follows that $j^2$ is split by $\pi_1\circ \pi_2\colon \CJ^2_A\to A$ given that $j^1$ was split by $\pi_1$.
\end{proof}

Finally, it is natural to ask to what extent the 2nd-order jet bimodule $\CJ^2_A$ depend on choice of the connection $(\nabla,\sigma)$. Following the classical notion of jet equivalence, consider two jet bimodules $\CJ^2_A,\tilde{\CJ}^2_A$ with respective jet prologantion maps $j^2\colon A \to \CJ^2_A$ and $\tilde j^2\colon A \to \tilde{\CJ}^2_A$ and bullet actions $\bullet_2$, $\tilde \bullet_2$. We then say that $\CJ^2_A, \tilde{\CJ}^2_A$ are equivalent as jet bimodules if there is a isomorphism $\varphi \in {}_A\Hom_A(\CJ^2_A,\tilde{\CJ}^2_A)$ w.r.t. the bullet actions such that the following diagram commutes
\[
\begin{tikzcd}
\CJ^2_A \arrow{rr}{\varphi} && \tilde{\CJ}^2_A\\
& \arrow{ul}{j^2}  A \arrow{ur}[swap]{\tilde j^2} 
\end{tikzcd}.
\]

\begin{lemma}
\label{lemma:equivalenceJ2A}
Consider $\CJ^2_A, \tilde{\CJ}^2_A$ as constructed from the connections $(\nabla,\sigma)$, $(\tilde \nabla, \sigma)$ on $\Omega^1$ respectively. Then $\CJ^2_A \simeq \tilde{\CJ}^2_A$ as  $A$-bimodules w.r.t. the bullet actions.
\end{lemma}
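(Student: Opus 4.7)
My plan is to construct the isomorphism explicitly from the difference of the two connections. Since both $(\nabla,\sigma)$ and $(\tilde\nabla,\sigma)$ share the same generalised braiding, the map
\[ \Lambda \coloneq \nabla - \tilde\nabla : \Omega^1 \to \Omega^1\tens_A\Omega^1 \]
is a bimodule map: the inhomogeneous Leibniz terms $\extd a\tens(-)$ and $\sigma((-)\tens\extd a)$ on either side cancel in the difference, leaving $\Lambda(ae)=a\Lambda(e)$ and $\Lambda(ea)=\Lambda(e)a$. Moreover, $\wedge\circ\Lambda = \wedge\nabla - \wedge\tilde\nabla = \extd - \extd = 0$ by torsion freeness of both connections, so $\Lambda$ lands in $\Omega^2_S$.

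Using this, I would define $\varphi \colon \CJ^2_A \to \tilde\CJ^2_A$ as the identity on $A\oplus \Omega^1$ and as $\tau \mapsto \tau - \Lambda(\omega)$ on the $\Omega^2_S$ component when $\omega$ is the $\Omega^1$-component; explicitly,
\[ \varphi(s, \omega, \tau) = (s,\, \omega,\, \tau - \Lambda(\omega)). \]
The inverse is obviously $(s,\omega,\tau)\mapsto(s,\omega,\tau+\Lambda(\omega))$, so $\varphi$ is a bijection of the underlying vector spaces. Compatibility with jet prolongations is a one-line check:
\[ \varphi(j^2(s)) = \varphi(s,\extd s,\nabla\extd s) = (s,\extd s,\nabla\extd s - \Lambda(\extd s)) = (s,\extd s,\tilde\nabla\extd s) = \tilde j^2(s). \]

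The main (but still routine) step is verifying that $\varphi$ intertwines the bullet actions. Using the explicit formulas for $\bullet_2$ and $\tilde\bullet_2$, the only component where $\varphi(a\bullet_2(s,\omega,\tau))$ and $a\,\tilde\bullet_2\,\varphi(s,\omega,\tau)$ could differ is the $\Omega^2_S$ one, where the comparison boils down to
\[ (\nabla - \tilde\nabla)(\extd a)\,s \;-\; \Lambda(a\omega + (\extd a)s) \;+\; a\Lambda(\omega) \;=\; 0, \]
which is immediate from bimodule linearity of $\Lambda$: $\Lambda(a\omega)=a\Lambda(\omega)$ and $\Lambda((\extd a)s)=\Lambda(\extd a)s=(\nabla-\tilde\nabla)(\extd a)\,s$. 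The right-action identity is analogous. The potential obstacle --- that the $[2,\sigma]$ terms in the bullet actions might obstruct the intertwining --- does not arise precisely because we assume the same $\sigma$ for both connections, so those terms cancel on both sides and only the $\nabla\extd a$ terms remain, which are exactly absorbed by $\Lambda$.
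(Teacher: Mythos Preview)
Your proof is correct and follows essentially the same approach as the paper: the isomorphism $\varphi$ you define coincides with the paper's (your $\tau-\Lambda(\omega)$ equals the paper's $\omega_2+(\tilde\nabla-\nabla)\omega_1$), and the bimodule intertwining check is the same computation organised slightly differently. You additionally make explicit that $\Lambda$ lands in $\Omega^2_S$ by torsion freeness, which the paper leaves implicit.
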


\begin{proof}
Consider the map $\varphi\colon \CJ^2_A \to \tilde \CJ^2_A$ defined on $a + \omega_1 + \omega_2 \in \CJ^2_A$ as 
\[
\varphi(a + \omega_1 + \omega_2) = a + \omega_1 + (\tilde \nabla - \nabla) \omega_1+ \omega_2.
\]
To see that $\varphi$ fulfils $\varphi \circ j^2 = \tilde j^2$ we compute
\[
\varphi( j^2(a) ) = \varphi(a+\extd a+ \nabla \extd a) 
= a+\extd a + (\tilde \nabla - \nabla) \extd a + \nabla \extd a = \tilde j^2(a).
\]
To check that $\varphi$ is indeed a bimodule map note that
\begin{align*}
&\varphi(b \bullet_2 (a + \omega_1 + \omega_2) ) 
= \varphi(ba + (\extd b) a + (\nabla \extd b )a + b\omega_1 + [2,\sigma] \extd b \tens \omega_1 + b \omega_2)\\
&= ba + (\extd b) a + (\tilde \nabla - \nabla)( (\extd b) a) + (\nabla \extd b )a + b\omega_1 + (\tilde \nabla - \nabla ) (b\omega_1)+ [2,\sigma] \extd b \tens \omega_1 + b \omega_2\\
&= ba + (\extd b) a + (\tilde \nabla \extd b)a + b\omega_1 + b\tilde \nabla \omega_1 - b \nabla \omega_1+ [2,\sigma] \extd b \tens \omega_1 + b \omega_2\\
&= b \tilde \bullet_2 a + b \tilde \bullet_2 \omega_1 + b \tilde \bullet_2 (\tilde \nabla \omega_1 - \nabla \omega_1 + \omega_2) = b \tilde \bullet_2 \varphi(a + \omega_1 + \omega_2) 
\end{align*}
and
\begin{align*}
&\varphi((a+\omega_1 + \omega_2) \bullet_2 b)
= \varphi(ab + a \extd b + a\nabla \extd b + \omega_1b +[2,\sigma] (\omega_1 \tens \extd b) + \omega_2 b)\\
&= ab + a\extd b +(\tilde \nabla - \nabla)(a\extd b) +a\nabla\extd b + \omega_1 b + (\tilde \nabla - \nabla)(\omega_1 b) + [2,\sigma] (\omega_1 \tens \extd b) + \omega_2 b\\
&= ab + a\extd b +a (\tilde \nabla - \nabla)\extd b + a\nabla\extd b + \omega_1 b + (\tilde \nabla - \nabla)(\omega_1) b + [2,\sigma] (\omega_1 \tens \extd b) + \omega_2 b\\
&= a \tilde \bullet_2 b + \omega_1 \tilde \bullet_2 b + ( (\tilde \nabla - \nabla)(\omega_1) + \omega_2) \tilde \bullet_2 b
= \varphi(a + \omega_1 + \omega_2) \tilde \bullet_2 b
\end{align*}
\end{proof}

Therefore our 2nd-order jet construction only depends on the choice of braiding $\sigma$. It is interesting to note that, if we consider $\CJ^2_A, \tilde{\CJ}^2_A$ with the right bullet action and the inherited action from $\Omega^1$ on the left, then they are isomorphic via the same $\varphi$ as above, even if one takes two connections $(\nabla,\sigma)$, $(\tilde\nabla,\tilde\sigma)$ with different braidings. This is due to the identity
\[
(\tilde \nabla - \nabla)(\omega_1 b) + [2,\sigma] (\omega_1 \tens \extd b) = (\tilde \nabla - \nabla)(\omega_1) b + [2,\tilde \sigma] (\omega_1 \tens \extd b)
\]
which can be used in the second line of the last computation above to show that $\varphi$ is still a right-module map in this case. One can also check that $\varphi$ is a left module map in this case.

\subsection{Third order jets bundle over a noncommutative algebra}
\label{sec:JA3}
We now construct $\CJ^3_A$ which is considerably more involved than $\CJ^{1}_A,\CJ^2_A$, and will lay the groundwork for the general case. Here we have to make further assumptions, which we will illustrate and motivate during this subsection. These are presented in Fig.~\ref{fig1} using the diagrammatic notion where maps are read downwards, as discussed at the end of Section~\ref{secpre}. Recall that we denoted the wedge product by a join,  $\nabla$ by a splitting and  its associated $\sigma$ by a braid crossing.

 From now on, we will use the notation $\wedge_i$,$\sigma_i$ for the wedge product and braiding respectively applied to the $i,i+1$ tensor factors of $\Omega^1 \tens_A \Omega^1 \tens_A \Omega^1$. We define sub-module of \emph{quantum symmetric 3-forms} as $\Omega^3_S=\ker \wedge_1 \cap \ker \wedge_2$, the joint kernel of $\Omega^1 \tens_A \Omega^1 \tens_A \Omega^1$ by applying $\wedge$ in the adjacent places. We continue with a torsion free $\nabla\colon \Omega^1\to \Omega^1\tens_A\Omega^1$ as needed for $\CJ^2_A$ and now denote by
\begin{align*}
    &\nabla_2\colon \Omega^1 \tens_A \Omega^1 \rightarrow \Omega^1 \tens_A \Omega^1 \tens_A \Omega^1,&
    &\nabla_2 = (\nabla \tens \id) + \sigma_1 (\id \tens \nabla)
\end{align*}
the tensor product connection on $\Omega^1 \tens_A \Omega^1$, with braiding
\begin{align*}
    &\sigma_{\nabla_2}\colon \Omega^1 \tens_A \Omega^1 \tens_A \Omega^1 \rightarrow \Omega^1 \tens_A \Omega^1 \tens_A \Omega^1,&
    &\sigma_{\nabla_2} = \sigma_1 \sigma_2.
\end{align*}
The 3rd-order jet bimodule and jet prolongation bimodule map will be defined as
\begin{align*}
    &\CJ^3_A = A \oplus \Omega^1 \oplus \Omega^2_S \oplus \Omega^3_S,&
    &j^3(s) = s + \extd s + \nabla \extd s + \nabla_2 \nabla \extd s
\end{align*}
for certain bimodule bullet actions $\bullet_3$ to be determined.

We first analyse the conditions for the proposed $j^3$ to land in the right place, namely that $\nabla_2\nabla \extd s \in \Omega^3_S$. For this we need flatness and the $\wedge$-compatibility condition as shown in Fig.~\ref{fig1}(b) and~\ref{fig1}(c) respectively.
\begin{figure}
 \[ \includegraphics[scale=0.75]{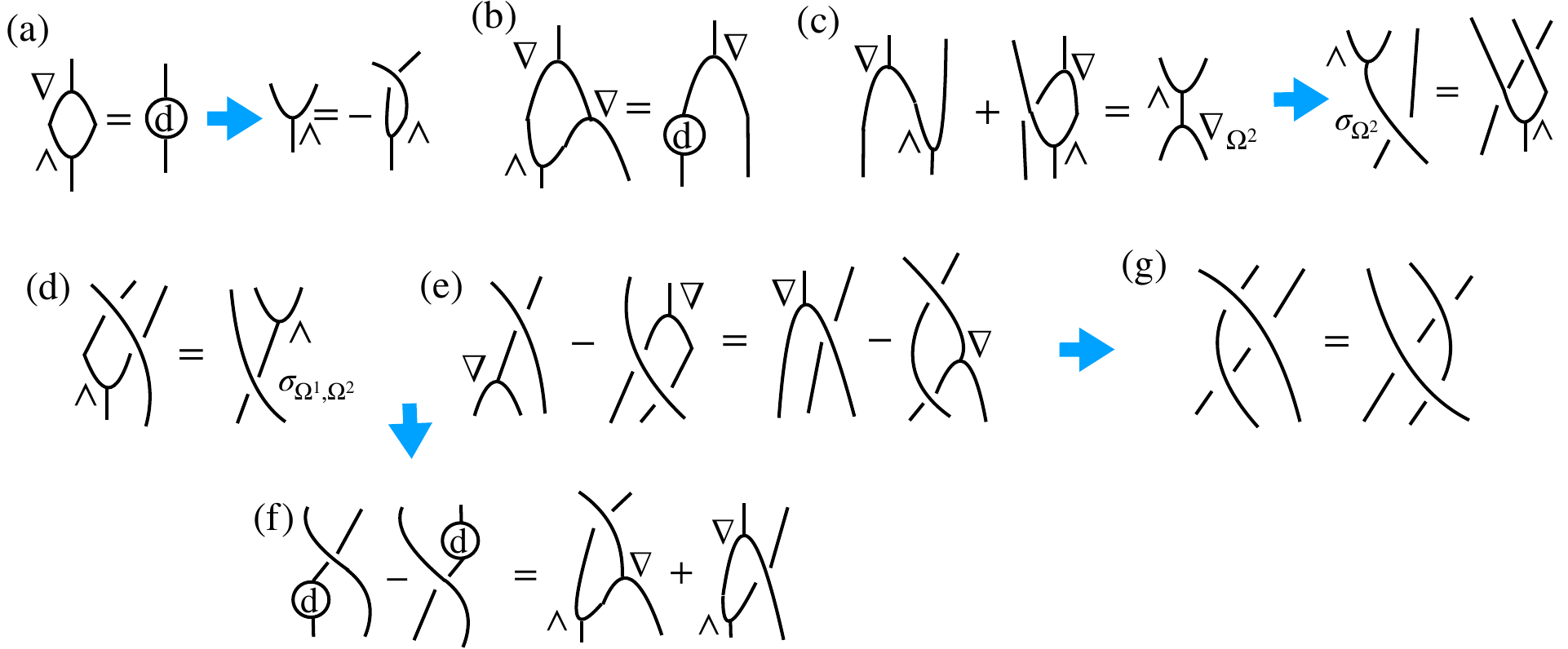}\]
 \caption{\label{fig1} Conditions for the connection $\nabla$: (a) torsion-free and its consequence, (b) flat, (c) $\wedge$-compatibility and its consequence, (d) extendability (e) Leibniz compatibility $\nabla_2\sigma_1=\sigma_2\nabla_2$ in Lemma~\ref{lem:nabla3leib}, which with (a), (d) implies condition (f) in Lemma~\ref{lemR} for the curvature to be a bimodule map. It also implies the Yang-Baxter or braid relations  (g).}
 \end{figure}

\begin{lemma}\label{nablaOmegaS} Suppose that $\nabla$ is torsion free, flat and {\em $\wedge$-compatible} in the sense that
\[ \nabla_{\Omega^2}(\eta\wedge\omega):=(\nabla\eta)\wedge\omega+ \wedge_2 \sigma_1(\eta\tens \nabla\omega)\]
for all $\eta,\omega\in \Omega^1$ is a well-defined bimodule connection on $\Omega^2$. Then $\nabla_2$ restricts to a bimodule connection on $\Omega^2_S$ and $\nabla_2\nabla\extd s\in \Omega^3_S$ for all $s\in A$. \end{lemma}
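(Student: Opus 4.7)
The plan splits into two parts matching the two assertions. For the first (that $\nabla_2$ restricts to a bimodule connection on $\Omega^2_S$), my aim would be to show $\nabla_2(\Omega^2_S)\subseteq \Omega^1\tens_A\Omega^2_S$, equivalently that $\wedge_2\nabla_2$ vanishes on $\ker\wedge$. The key observation is that on an arbitrary elementary tensor $\eta\tens\omega$, immediate expansion of $\nabla_2=(\nabla\tens\id)+\sigma_1(\id\tens\nabla)$ followed by $\wedge_2$ gives
\[
\wedge_2\nabla_2(\eta\tens\omega)=(\nabla\eta)\wedge\omega+\wedge_2\sigma_1(\eta\tens\nabla\omega),
\]
which is literally the right-hand side of the defining formula for $\nabla_{\Omega^2}(\eta\wedge\omega)$. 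Hence $\wedge_2\circ\nabla_2=\nabla_{\Omega^2}\circ\wedge$ as maps $\Omega^1\tens_A\Omega^1\to\Omega^1\tens_A\Omega^2$, the $\wedge$-compatibility assumption being exactly what makes both sides well defined. Restricting to $\ker\wedge=\Omega^2_S$ yields $0$, as required. The bimodule-connection Leibniz axioms for the restriction are inherited from those of $\nabla_2$ on $\Omega^1\tens_A\Omega^1$, since $\Omega^2_S$ is a sub-bimodule.

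For the second assertion, I would first note that $\nabla\extd s\in\Omega^2_S$ by torsion-freeness, since $\wedge\nabla\extd s=T_\nabla(\extd s)+\extd^2 s=0$. Combining with the first part gives $\nabla_2\nabla\extd s\in\Omega^1\tens_A\Omega^2_S=\ker\wedge_2$, which is one of the two defining conditions of $\Omega^3_S$. It remains to show $\wedge_1\nabla_2\nabla\extd s=0$. Expanding,
\[
\wedge_1\nabla_2=(\wedge\nabla\tens\id)+\wedge_1\sigma_1(\id\tens\nabla),
\]
and using the torsion-free identities $\wedge\nabla=\extd$ and $\wedge(\id+\sigma)=0$ (so $\wedge_1\sigma_1=-(\wedge\tens\id)$) recalled in Section~\ref{secpre}, one finds
\[
\wedge_1\nabla_2\nabla\extd s=(\extd\tens\id)(\nabla\extd s)-(\wedge\tens\id)(\id\tens\nabla)(\nabla\extd s)=R(\extd s),
\]
which vanishes by flatness.

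Neither step poses a real obstacle: the proof amounts to the observation that the two natural projections of $\nabla_2\nabla\extd s$ onto $\Omega^1\tens_A\Omega^2$ and $\Omega^2\tens_A\Omega^1$ reproduce, respectively, $\nabla_{\Omega^2}(\wedge\nabla\extd s)=\nabla_{\Omega^2}(\extd^2 s)$ and $R(\extd s)$, both zero under the stated hypotheses. Diagrammatically, the two computations are simply the identities depicted in Fig.~\ref{fig1}(a)--(c) threaded through $\nabla_2\nabla\extd s$; the only thing one really has to verify by hand is that the formula for $\wedge_2\nabla_2$ on elementary tensors agrees with the $\wedge$-compatibility formula, which is routine.
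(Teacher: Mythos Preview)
Your proposal is correct and follows essentially the same approach as the paper: the identity $\wedge_2\circ\nabla_2=\nabla_{\Omega^2}\circ\wedge$ (which is precisely the $\wedge$-compatibility condition, cf.\ equation~(\ref{wedgecompatcon})) handles the restriction to $\Omega^2_S$ and the $\wedge_2$-part, while the $\wedge_1$-part is identified with $R_\nabla(\extd s)$ via the torsion-free identities exactly as you do. The paper merely presents the two computations in the opposite order.
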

\begin{proof} For wedge acting on the first factors, we have
\begin{align*}
    \wedge_1 \nabla_2 \nabla \omega
    &= \wedge_1(\nabla \tens \id + \sigma_1(\id \tens \nabla))\nabla \omega= (\wedge \nabla \tens \id + \wedge_1 \sigma_1(\id \tens \nabla))\nabla \omega\\
    &= (\extd\tens \id - \wedge_1 (\id \tens \nabla))\nabla \omega= (\extd\tens \id - (\id \wedge \nabla))\nabla \omega= R_\nabla(\omega)=0,
\end{align*}
where we used $\wedge \nabla = \extd$, $\im(\id + \sigma) \subseteq \ker \wedge$ and that the curvature $R_\nabla$ of $\nabla$ vanishes. We actually only needed $R_\nabla\extd s=0$, but since $R_\nabla$ is a left module map, this is equivalent. For $\wedge$ to vanish in the second position, we have
\begin{align*}
    \wedge_2 \nabla_2 \nabla \omega
    &= \wedge_2(\nabla \tens \id + \sigma_1(\id \tens \nabla))\nabla \omega=\nabla_{\Omega^2}(\wedge\nabla\omega)=\nabla_{\Omega^2}\extd \omega,
\end{align*}
since $\nabla$ is assumed to be $\wedge$-compatible and torsion free. This vanishes if $\omega=\extd s$. In fact, we have proven the stronger property
\begin{equation}\label{wedgecompatcon} \wedge_2\nabla_2(\Omega^2_S)=0\end{equation}
so that $\nabla_2$ restricts to $\nabla_{\Omega^2_S}\colon \Omega^2_S\to \Omega^1\tens_A\Omega^2_S$ as also required.
It is shown in \cite[p. 575]{BegMa} that (\ref{wedgecompatcon}) holds iff $\nabla$ is $\wedge$-compatible, i.e. the converse is also true.
\end{proof}

The $\wedge$-compatibility condition used in Lemma~\ref{nablaOmegaS} also entails the existence of a braiding for $\nabla_{\Omega^2}$, which is necessarily given by
 \[ \sigma_{\Omega^2}: \Omega^2\tens_A
 \Omega^1 \to \Omega^1\tens_A\Omega^2,\quad \sigma_{\Omega^2}(\omega\wedge\eta\tens\zeta)=\wedge_2\sigma_1\sigma_2(\omega\tens\eta\tens\zeta)\]
as  shown  in Fig.~1(c).

Next, we will need the notion that $\nabla$ is {\em extendable} \cite[Def. 4.10]{BegMa} in the sense that $\sigma$ extends to a well defined bimodule map
 \[ \sigma_{\Omega^1,\Omega^2}:\Omega^1\tens_A
 \Omega^2 \to \Omega^2\tens_A\Omega^1,\quad  \sigma_{\Omega^1,\Omega^2}(\omega\tens\eta\wedge\zeta)=\wedge_1\sigma_2\sigma_1(\omega\tens\eta\tens\zeta)\]
 for all $\omega,\eta,\zeta\in\Omega^1$. This is shown in Fig.~\ref{fig1}(d) and restores a certain symmetry to our assumptions. It is also motivated geometrically as part of the conditions for an object in the monoidal category ${}_A\CG_A$ in \cite{BegMa}, which consists of bimodules with extendable connections of which the curvatures are bimodule maps.

 \begin{lemma}\label{lemR} An extendable connection $\nabla$ on $\Omega^1$ has curvature a bimodule map iff the condition in Fig.~\ref{fig1}(f) holds.  In this case the curvature of $\nabla_2$ is
 \[ R_{\nabla_2}=R_\nabla\tens\id+( \sigma_{\Omega^1,\Omega^2}\tens\id)(\id\tens R_\nabla).\]
 \end{lemma}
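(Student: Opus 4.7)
My plan is to prove the two claims of the lemma separately: first the iff characterisation for $R_\nabla$ to be a bimodule map, then the tensor product formula for $R_{\nabla_2}$.

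For the first claim, the curvature $R_\nabla=(\extd\tens\id-\id\wedge\nabla)\nabla$ is automatically left $A$-linear by the usual cancellation coming from the Leibniz rule, so only right $A$-linearity needs to be checked. I would compute $R_\nabla(\omega a)$ by applying the bimodule Leibniz rule $\nabla(\omega a)=(\nabla\omega)a+\sigma(\omega\tens\extd a)$ and then $(\extd\tens\id-\id\wedge\nabla)$. The first summand contributes $R_\nabla(\omega)a$ plus a cross term produced when $\extd$ lands on the right-most $a$. The second summand $(\extd\tens\id-\id\wedge\nabla)\sigma(\omega\tens\extd a)$ is rewritten using bimodule-linearity of $\sigma$ together with extendability, which by Fig.~\ref{fig1}(d) is precisely what allows one to pull $\sigma$ past $\wedge$ and collect the $\wedge_1\sigma_2\sigma_1$ strings as $\sigma_{\Omega^1,\Omega^2}$. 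After regrouping, the obstruction $R_\nabla(\omega a)-R_\nabla(\omega) a$ is exactly the diagram of Fig.~\ref{fig1}(f), yielding both implications of the iff.

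For the formula, I would compute $R_{\nabla_2}$ directly from $\nabla_2=(\nabla\tens\id)+\sigma_1(\id\tens\nabla)$ by substituting into the definition $R_{\nabla_2}=(\extd\tens\id\tens\id-\id\wedge\nabla_2)\nabla_2$ and expanding. The $(\nabla\tens\id)\circ(\nabla\tens\id)$ contribution, together with the matching wedge acting on the leading $\Omega^1$, assembles into $R_\nabla\tens\id$. The remaining contribution, coming from the $\sigma_1(\id\tens\nabla)$ second summand of the outer $\nabla_2$ combined with the $(\id\tens\nabla)$ piece of the inner $\nabla_2$, has a $\sigma_1$ to the left of $(\id\tens R_\nabla)$ with a wedge passing through the right factor of $\sigma$; the extendability assumption (Fig.~\ref{fig1}(d)) is precisely what converts this composition $\wedge_2\sigma_1$ of strings into $\sigma_{\Omega^1,\Omega^2}\tens\id$. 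The cross terms $(\nabla\tens\id)\sigma_1(\id\tens\nabla)$ and $\sigma_1(\id\tens\nabla)(\nabla\tens\id)$, once paired with the corresponding $(\id\wedge\nabla_2)$ parts, cancel (this is the same combinatorics as the proof of the general tensor product curvature formula).

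The main obstacle is the usual well-definedness issue over $\tens_A$: neither $\extd\tens\id$ nor $\id\tens\nabla$ alone descend, and only their combinations entering the curvature do. I would therefore keep the summands of $\nabla_2$ paired with their corresponding $(\id\wedge-)$ counterparts throughout, so that at each intermediate step one has a bona fide bimodule map on $\tens_A$. The most delicate bookkeeping step is identifying the composite $\wedge_2\sigma_1\sigma_2$ that appears as the $\sigma_{\Omega^1,\Omega^2}$ from Fig.~\ref{fig1}(d), which is what forces the extendability hypothesis to enter explicitly in both parts of the lemma.
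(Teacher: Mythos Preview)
Your approach matches the paper's, but there is a gap in the first part. Your computation of $R_\nabla(\omega a)-R_\nabla(\omega)a$ only shows that right $A$-linearity of $R_\nabla$ is equivalent to condition~(f) evaluated on elements $\omega\tens\extd a$. To obtain the iff with~(f) on all of $\Omega^1\tens_A\Omega^1$, you must check that the difference $C(\omega,\eta)$ of the two sides of~(f) is well-defined over $\tens_A$, i.e.\ that $C(\omega a,\eta)=C(\omega,a\eta)$; only then does vanishing on $\omega\tens\extd b$ extend to $\omega\tens a\extd b=\omega a\tens\extd b$ and hence to all of $\Omega^1$. The paper does this explicitly, and it is precisely here that extendability (Fig.~\ref{fig1}(d)) is used, together with the bimodule connection property of $\nabla$ and the graded Leibniz rule for $\extd$. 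Your invocation of extendability to ``rewrite the second summand and collect $\wedge_1\sigma_2\sigma_1$ as $\sigma_{\Omega^1,\Omega^2}$'' is misplaced: no $\sigma_{\Omega^1,\Omega^2}$ appears in the obstruction for part one; extendability enters only in the well-definedness step.

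For the second part your direct computation is fine (the paper simply cites \cite{BegMa:bia,BegMa}), but note that the stated formula only makes sense as a map on $\Omega^1\tens_A\Omega^1$ once $R_\nabla\tens\id$ is well-defined there, which is exactly the bimodule-map hypothesis on $R_\nabla$ from part one; you should flag where this is used rather than only invoking extendability for the $\sigma_{\Omega^1,\Omega^2}$ identification.
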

 \begin{proof} For the first part, the operator $(\extd\tens \id-\id\wedge\nabla)$ is a left module map as $\nabla$ is a left connection, which in turn makes $R_\nabla$ a left module map as is well-known, using $\extd^2=0$. On the other side
 \begin{align*}R_\nabla(\omega a)&=(\extd\tens\id-\wedge_1\nabla)((\nabla\omega)a+\sigma(\omega\tens\extd a))\\
 &=R_\nabla(\omega)a-\wedge_1(\id\tens\sigma)(\nabla\omega\tens \extd a)+(\extd\tens_A\id-\wedge_1\nabla)\sigma(\omega\tens\extd a)
 \end{align*}
 for all $\omega\in\Omega^1$ and $a\in A$. So we have a bimodule map iff Fig.~\ref{fig1}(f) holds when applied to $\omega\tens_A\extd a$, since $\extd^2a=0$. Writing $C(\omega,\eta)$ for the left minus the right side of (f), it is easy to see from $\nabla$ a bimodule connection, the graded Leibniz rule for $\extd$, and extendability in Fig.~\ref{fig1}(d) that $C(\omega,a\eta)=C(\omega a,\eta)$ for all $a\in A$. Hence if $C(\omega,\extd b)=0$ for all $b,\omega$ then $C(\omega,a\extd b)=0$ and hence $C(\omega,\eta)=0$ for all $\omega,\eta\in\Omega^1$. One can similarly prove that $C(\omega,\eta a)=C(\omega, \eta)a$ and more easily that $C(a\omega,\eta)=aC(\omega,\eta)$. Hence the condition actually makes sense as the vanishing of a certain bimodule map from $\Omega^1\tens_A\Omega^1$ to itself.  The last part is a special case of the proof in \cite{BegMa:bia,BegMa} that ${}_A\CG_A$ has tensor products, so we omit details. \end{proof}

 We are mainly interested in the case of zero curvature $R_\nabla=0$ in view of Lemma~\ref{nablaOmegaS}, and zero is a bimodule map so the above will also ensure that $\nabla_2$ is flat. It also means that the stated diagram (f) must hold in the flat extendable case. It will be useful, however, whether or not the connection is flat, to impose a novel condition, shown in Fig.~\ref{fig1}(e), which in the torsion free extendable case implies (f). To further understand its significance, we note that the `braided integer' and `co-braided integer' maps on $(\Omega^{1})^{\tens_A 3}$ restrict to
\begin{align}
\label{3sig}
 [3,\sigma]:= \id+\sigma_2+\sigma_1\sigma_2:\Omega^2_S\tens_A\Omega^1\to \Omega^3_S,\quad
[3,\sigma]':=\id+\sigma_1+\sigma_2\sigma_1: \Omega^1\tens_A\Omega^2_S\to\Omega^3_S
\end{align}
where the indices on $\sigma$ refer to the position. That $[3,\sigma],[3,\sigma]'$ land in the right place as shown assumes that the $\wedge$-compatibility and extendability in Fig.~\ref{fig1}(c),(d) hold. We also note that
\begin{equation}\label{32} [3,\sigma]([2,\sigma]\tens\id)=[3,\sigma]'(\id\tens [2,\sigma])\end{equation}
holds if and only if the Yang-Baxter or braid relations shown in Fig.~\ref{fig1}(g) holds. The proof is just a matter of writing out both sides as 6 terms and comparing. This is part of the theory of Hopf algebras in braided categories but applied now in the category of $A$-bimodules with tensor product over $A$.

\begin{lemma}\label{lem:nabla3leib} Let $\nabla$ be a bimodule connection on $\Omega^1$. The 3rd-order Leibniz rule
\[ \nabla_2\nabla\extd(ab)=(\nabla_2\nabla\extd a)b +[3,\sigma](\nabla\extd a\tens \extd b)+[3,\sigma]'(\extd a\tens\nabla\extd b)+ a(\nabla_2\nabla\extd b)\]
holds for all $a,b\in A$ if and only if the {\em Leibniz-compatibility} condition $\nabla_2\sigma_1=\sigma_2\nabla_2$ in Fig.~\ref{fig1}(e) holds. In this case, $\sigma$ obeys the braid relations and if $\nabla$ is torsion free and extendable then $R_\nabla$ is a bimodule map.
\end{lemma}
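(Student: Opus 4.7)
The plan is to expand $\nabla_2\nabla\extd(ab)$ directly and compare to the claimed formula. First I would apply the 2nd-order Leibniz rule (\ref{eq:2ndOrderLeibniz}) to write
\[
\nabla\extd(ab) = (\nabla\extd a)b + \extd a\tens\extd b + \sigma(\extd a\tens\extd b) + a\,\nabla\extd b,
\]
and then apply $\nabla_2$ to each summand, using that $\nabla_2$ is itself a bimodule connection with braiding $\sigma_{\nabla_2}=\sigma_1\sigma_2$. Three of the four summands expand immediately; the only obstruction to a closed form is the term $\nabla_2\sigma(\extd a\tens\extd b)$. Substituting the Leibniz-compatibility $\nabla_2\sigma=\sigma_2\nabla_2$ at this step and collecting the six middle terms by the two types $\nabla\extd a\tens\extd b$ and $\extd a\tens\nabla\extd b$ produces the coefficient morphisms $\id+\sigma_2+\sigma_1\sigma_2=[3,\sigma]$ and $\id+\sigma_1+\sigma_2\sigma_1=[3,\sigma]'$ of (\ref{3sig}), giving the claimed formula. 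Conversely, the same calculation shows that the 3rd-order Leibniz rule forces $\nabla_2\sigma=\sigma_2\nabla_2$ on all inputs of the form $\extd a\tens\extd b$.

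To upgrade this pointwise statement to the full map identity (e), set $D=\nabla_2\sigma-\sigma_2\nabla_2$ and check that $D$ is left $A$-linear. Using the left Leibniz rule for $\nabla_2$ and bimodule-ness of both $\sigma$ and $\sigma_2$, the two copies of $\extd a\tens\sigma\eta$ produced on either side cancel and one gets $D(a\eta)=aD(\eta)$. Since $\Omega^1=A\cdot\extd A$, the elements $\extd a\tens\extd b$ generate $\Omega^1\tens_A\Omega^1$ as a left $A$-module, so vanishing of $D$ on these generators gives $D\equiv 0$, completing the iff.

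The remaining two claims fall out from the same framework. For the Yang--Baxter relation, I would compute the right-Leibniz defect of $D$: using the right Leibniz rule for $\nabla_2$ with its braiding $\sigma_1\sigma_2$ and bimodule-ness of $\sigma_2$, a short calculation gives
\[
D(\eta a)-D(\eta)a=(\sigma_1\sigma_2\sigma_1-\sigma_2\sigma_1\sigma_2)(\eta\tens\extd a).
\]
With $D\equiv 0$, the braid defect vanishes on all $\eta\tens\extd a$, and since it is itself a bimodule map it vanishes on all of $(\Omega^1)^{\tens_A 3}$, giving (g). Finally, for $R_\nabla$ to be a bimodule map under torsion-freeness and extendability, I would apply $\wedge_1$ to both sides of $\nabla_2\sigma=\sigma_2\nabla_2$ and evaluate on $\omega\tens\extd a$. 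Torsion-freeness reduces $\wedge_1\nabla_2$ to $\extd\tens\id-\id\wedge\nabla$ (using $\wedge\nabla=\extd$ and $\wedge\sigma=-\wedge$); extendability identifies $\wedge_1\sigma_2\sigma_1$ with $\sigma_{\Omega^1,\Omega^2}(\id\tens\wedge)$, whose $\wedge\nabla\extd a=\extd^2 a$ contribution vanishes, leaving exactly condition (f) of Fig.~\ref{fig1}. Lemma~\ref{lemR} then concludes. The main delicacy throughout is the right-Leibniz computation that produces the braid defect, which needs careful bookkeeping of how the inner braiding $\sigma_1\sigma_2$ of $\nabla_2$ interacts with the outer $\sigma_2$ applied to its output.
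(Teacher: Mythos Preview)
Your proposal is correct and follows essentially the same route as the paper: expand $\nabla_2\nabla\extd(ab)$, identify the obstruction as $D=\nabla_2\sigma-\sigma_2\nabla_2$ evaluated on exact tensors, use left $A$-linearity of $D$ together with the fact that $\{\extd a\tens\extd b\}$ generates $\Omega^1\tens_A\Omega^1$ as a left module to get the full identity~(e), and then apply $\wedge_1$ with torsion-freeness and extendability to reach condition~(f) and invoke Lemma~\ref{lemR}. The only cosmetic difference is in how you derive the braid relations: the paper packages $D$ as the covariant derivative $\doublenabla(\sigma)$ and cites \cite[p.~302]{BegMa} for the general fact that $\doublenabla$ of a bimodule map is a right-module map iff the map intertwines the relevant braidings, whereas you compute the right-Leibniz defect $D(\eta a)-D(\eta)a=(\sigma_1\sigma_2\sigma_1-\sigma_2\sigma_1\sigma_2)(\eta\tens\extd a)$ directly; these are the same argument, with yours being the unpacked version.
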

\begin{proof} Computing the left hand side using $\extd(ab)=(\extd a)b+a\extd b$ and the iterated Leibniz properties of $\nabla$ and $\nabla_2$, and  comparing, equality of the stated 3rd-order Leibniz rule needs
\[ [3,\sigma](\nabla\extd a\tens \extd b)=\sigma_{\nabla_2}(\nabla\extd a\tens\extd b)-\sigma_1(\extd a\tens\nabla\extd b)-\sigma_2\sigma_1(\extd a\tens\nabla\extd b)+\nabla_2[2,\sigma](\extd a\tens\extd b).\]
Dissecting $[3,\sigma]$ and $\nabla_2$ and cancelling terms requires the condition in Fig.~\ref{fig1}(e). More precisely, if we write $C(\omega,\eta) = \nabla_2 \sigma_1(\omega \tens \eta) - \sigma_2 \nabla_2(\omega \tens \eta)$ for the left hand side minus the right hand side, we need $C(\extd a,\extd b)$ to vanish. But by the Leibniz properties of a connection, one can see that $C:\Omega^{1\tens_A 2}\to \Omega^{1\tens_A 3}$ is a well-defined left module map. It follows that its vanishing on $\extd a,\extd b$ is equivalent to its vanishing on all $\omega,\eta$. In fact, the map $C=\doublenabla(\sigma)$ is the covariant derivative of $\sigma$ in the notation of \cite{BegMa} and as such is a right module map iff $\sigma$ intertwines the braiding before and after $\sigma$ by \cite[p302]{BegMa}, which is iff the braid-relations (g) given that the braiding on the bimodule $\Omega^1\tens_A\Omega^1$ is itself $\sigma_1\sigma_2$.  Moreover, zero is a right module map, hence condition (e) implies (g). Also, applying $\wedge_1$ to the condition (e) in the torsion free case extendable case gives the condition (f) and we then use Lemma~\ref{lemR}. Incidentally, applying instead $\wedge_2$ gives something which is automatic in the $\wedge$-compatible case, i.e. a weaker form of $\wedge$-compatibility.  \end{proof}

With these preliminary observations, we are now ready to state and prove our main result of this section.

\begin{theorem}\label{thmJ3}(Construction of $\CJ^3_A$).  Let $\nabla$ be torsion free, flat, $\wedge$-compatible, extendable and Leibniz-compatible. Then
\begin{align*}
  &a \bullet_3 s = a \bullet_2 s + (\nabla_2\nabla \extd a) s,&
  &s\bullet_3a=s \bullet_2 a +s (\nabla_2\nabla \extd a) ,\\
  &a \bullet_3 \omega =a \bullet_2 \omega+[3,\sigma](\nabla \extd a\tens\omega) ,&
  &\omega\bullet_3 a=\omega \bullet_2  a+[3,\sigma]'(\omega\tens\nabla \extd a) ,\\
  &a \bullet_3 (\omega^1 \tens \omega^2) = a \bullet_2 (\omega^1 \tens \omega^2) +[3,\sigma]'( \extd a \tens \omega^1 \tens \omega^2),&
  &(\omega^1 \tens \omega^2)\bullet_3a=  (\omega^1 \tens \omega^2)\bullet_2 a +[3,\sigma]( \omega^1 \tens \omega^2\tens \extd a), \\
  &a \bullet_3 (\omega^1 \tens \omega^2 \tens \omega^3)
  =  a \omega^1 \tens \omega^2 \tens \omega^3, &
  &(\omega^1 \tens \omega^2 \tens \omega^3)\bullet_3a = \omega^1 \tens \omega^2 \tens \omega^3a
\end{align*}
for  $s\in A$, $\omega\in \Omega^1$, $\omega^1\tens \omega^2\in \Omega^2_S$ and $\omega^1\tens \omega^2 \tens \omega^3 \in \Omega^3_S$,  makes $\CJ^3_A$ a bimodule and $j^3$ a bimodule map. Quotienting out $\Omega^3_S$ gives a bimodule surjection  $\pi_3: \CJ^3_A\to \CJ^2_A$  such that $\pi_3\circ j^3=j^2$.
\end{theorem}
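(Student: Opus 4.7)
The plan is to follow the inductive template established in the proof of Proposition~\ref{propJ12} for $\CJ^2_A$, treating $\bullet_3$ as $\bullet_2$ plus correction terms landing in $\Omega^3_S$, and checking the bimodule and prolongation axioms degree by degree.

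First I would verify well-definedness: the codegree-3 correction terms must land in $\Omega^3_S$. The term $\nabla_2\nabla\extd a$ is in $\Omega^3_S$ by Lemma~\ref{nablaOmegaS}; the terms built from $[3,\sigma]$ and $[3,\sigma]'$ land in $\Omega^3_S$ by (\ref{3sig}), which requires both $\wedge$-compatibility (for $[3,\sigma]$) and extendability (for $[3,\sigma]'$).

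Next I would check the three bimodule axioms $a\bullet_3(b\bullet_3 x)=(ab)\bullet_3 x$, $x\bullet_3(ab)=(x\bullet_3 a)\bullet_3 b$ and $a\bullet_3(x\bullet_3 b)=(a\bullet_3 x)\bullet_3 b$ at each of the four degrees of $\CJ^3_A$. Because the $\bullet_2$-structure already satisfies its analogue of these axioms (Proposition~\ref{propJ12}), the argument reduces to matching the newly introduced $\nabla_2$- and $[3,\sigma]/[3,\sigma]'$-terms. At degree $0$, expanding $a\bullet_3(b\bullet_3 s)$ produces exactly $(\nabla_2\nabla\extd a)bs+[3,\sigma](\nabla\extd a\tens(\extd b)s)+[3,\sigma]'(\extd a\tens(\nabla\extd b)s)+a(\nabla_2\nabla\extd b)s$, which by the bimodule-map property of $[3,\sigma],[3,\sigma]'$ and the 3rd-order Leibniz rule of Lemma~\ref{lem:nabla3leib} collects into $(\nabla_2\nabla\extd(ab))s=(ab)\bullet_3 s - (ab)\bullet_2 s$. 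At degrees $2$ and $3$ the checks are straightforward because the new terms only use left/right $A$-multiplications, $[2,\sigma]$ or $[3,\sigma]$, $[3,\sigma]'$, all of which are bimodule maps, so the identities follow from $A$-linearity of the tensor products over $A$.

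The main obstacle is the degree-1 middle case $a\bullet_3(\omega\bullet_3 b)=(a\bullet_3\omega)\bullet_3 b$. Here one side generates a $\Omega^3_S$-contribution of the form $[3,\sigma]'(\extd a\tens[2,\sigma](\omega\tens\extd b))$ while the other produces $[3,\sigma]([2,\sigma](\extd a\tens\omega)\tens\extd b)$; their equality is exactly the identity (\ref{32}), which the paper explains is equivalent to the Yang--Baxter equation for $\sigma$, and this is guaranteed by the Leibniz-compatibility hypothesis via Lemma~\ref{lem:nabla3leib}. All remaining degree-1 terms pair off by left/right $A$-linearity of $[2,\sigma]$, $[3,\sigma]$ and $[3,\sigma]'$.

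Having established that $\CJ^3_A$ is a bimodule, I would verify that $j^3$ is a bimodule map by expanding $j^3(ab)$ using the ordinary Leibniz rule for $\extd$, the 2nd-order Leibniz rule (\ref{eq:2ndOrderLeibniz}) for $\nabla\extd$, and the 3rd-order Leibniz rule of Lemma~\ref{lem:nabla3leib} for $\nabla_2\nabla\extd$, and then comparing term by term with $a\bullet_3 j^3(b)$ and with $j^3(a)\bullet_3 b$; the four groups of terms (one per degree) match on the nose with the defining formulas for $\bullet_3$. Finally, the projection $\pi_3:\CJ^3_A\to\CJ^2_A$ is clearly a bimodule surjection since every correction term that distinguishes $\bullet_3$ from $\bullet_2$ lies in the $\Omega^3_S$-summand being quotiented, and $\pi_3\circ j^3=j^2$ follows immediately from the definition of $j^3=j^2+\nabla_2\nabla\extd$.
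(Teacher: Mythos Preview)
Your approach matches the paper's essentially step for step: decompose $\bullet_3=\bullet_2+\text{(degree-3 correction)}$, verify landing via Lemma~\ref{nablaOmegaS} and (\ref{3sig}), invoke Lemma~\ref{lem:nabla3leib} at degree~$0$, and use identity (\ref{32}) at degree~$1$.

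There is, however, one oversight in your analysis. You identify the degree-$1$ \emph{bimodule} check $a\bullet_3(\omega\bullet_3 b)=(a\bullet_3\omega)\bullet_3 b$ as the single place where (\ref{32}) is needed, but the paper's proof shows that the degree-$1$ \emph{left-action} check $a\bullet_3(b\bullet_3\omega)=(ab)\bullet_3\omega$ (and symmetrically the right-action check) also requires it. Expanding $a\bullet_3(b\bullet_3\omega)$ produces a term $[3,\sigma]'(\extd a\tens[2,\sigma](\extd b\tens\omega))$, while $(ab)\bullet_3\omega$ produces $[3,\sigma]([2,\sigma](\extd a\tens\extd b)\tens\omega)$ from the $[2,\sigma]$-part of the second-order Leibniz rule for $\nabla\extd(ab)$; equating these is again (\ref{32}). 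So your claim that ``all remaining degree-$1$ terms pair off by $A$-linearity'' is not quite right---the same braid identity intervenes a second time. Since you already have (\ref{32}) available this does not break the argument, but your accounting of where the nontrivial input is used is incomplete.
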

\begin{proof} (1) All the stated action maps land in one of the components $\Omega^i_S$ of $\CJ^3_A$  given Lemma~\ref{nablaOmegaS} and given where the $[3,\sigma]$ and $[3,\sigma]'$ land.

(2) We check that we indeed have an action when acting on each degree. On degree 0,
\begin{align*} a\bullet_3(b\bullet_3 s)&=a\bullet_3(b\bullet_2 s+(\nabla_2\nabla\extd b)s)\\
&=a\bullet_2(b\bullet_2 s)+(\nabla_2\nabla\extd a)b s+[3,\sigma](\nabla\extd a\tens(\extd b)s)+[3,\sigma]'(\extd a\tens(\nabla\extd b)s)+ a(\nabla_2\nabla\extd b)s
\end{align*}
where we decompose $b\bullet_2 s=bs+ (\extd b)s+(\nabla\extd b)s$, apply the stated $a\bullet_3$ and recombine all the $a\bullet_2 $ parts of these as the first term of the result. The next three terms are the other terms from $a\bullet_3$ in this process. For this to equal $(ab)\bullet_3 s$, we need  the 3rd-order Leibniz rule in Lemma~\ref{lem:nabla3leib}.

Next, on degree 1,  we have
\begin{align*} a\bullet_3(b\bullet_3 \omega)&=a\bullet_3(b\bullet_2\omega+[3,\sigma](\nabla\extd b\tens\omega))\\
&=a\bullet_2(b\bullet_2 \omega)+[3,\sigma](\nabla\extd a\tens b\omega)+  [3,\sigma]'(\extd a\tens[2,\sigma](\extd b\tens\omega))+[3,\sigma](a\nabla\extd b\tens\omega),\\
(ab)\bullet_3\omega&=(ab)\bullet_2\omega+[3,\sigma](a\nabla\extd b+\extd a\tens\extd b+(\nabla\extd a)b+\sigma(\extd a\tens\extd b)\tens\omega)
\end{align*}
where for the second equality we decomposed $b\bullet_2\omega=b\omega+[2,\sigma](\extd b\tens\omega)$, applied $a\bullet_3$ to each term and then recombined the $a\bullet_2$ parts of these. For the last equality, we used the Leibniz properties on $ab$. For these expressions to be equal, we need the braid relations (g) according to equation (\ref{32}), but these are implied by Leibniz compatibility.

On degree 2, we have
\begin{align*} a\bullet_3(b\bullet_3 (\omega^1\tens \omega^2))&=a\bullet_3(b\bullet_2(\omega^1\tens \omega^2)+[3,\sigma]'(\extd b\tens \omega^1\tens \omega^2))\\
&=a\bullet_2(b\bullet_2 (\omega^1\tens \omega^2)+[3,\sigma]'(\extd a\tens  b\omega^1\tens  \omega^2)+a[3,\sigma]'(\extd b\tens \omega^1\tens \omega^2)\\
&=(ab)\bullet_2 (\omega^1\tens \omega^2)+[3,\sigma]'(((\extd a)b+a\extd b)\tens \omega^1\tens \omega^2)=(ab)\bullet_3 (\omega^1\tens \omega^2)
\end{align*}
without further conditions needed. The action on degree 3 works automatically also.

(3) The right actions work the same way without further conditions being needed. For example, on degree 0 we again need Lemma~\ref{lem:nabla3leib}.

(4) Now we check that these actions commute, so that we have a bimodule. On degree 0,
\begin{align*}(a\bullet_3s)\bullet_3 b&=(a\bullet_2 s+ (\nabla_2\nabla\extd a)s)\bullet_3 b\\
&=(a\bullet_2 s)\bullet_2 b+ as\nabla_2\nabla\extd b+[3,\sigma]'((\extd a)s\tens \nabla\extd b)+[3,\sigma]((\nabla\extd a)s\tens \extd b)+ (\nabla_2\nabla\extd a)sb,\\
a\bullet_3(s\bullet_3 b)&=a\bullet_3(s\bullet_2 b+s\nabla_2\nabla\extd b)\\
&=a\bullet_2(s\bullet_2 b)+(\nabla_2\nabla\extd a)sb+[3,\sigma](\nabla\extd a\tens  s\extd b)+[3,\sigma]'(\extd a\tens  s\nabla\extd b)+as\nabla_2\nabla\extd b
\end{align*}
by the same method as above, i.e. decomposing $a\bullet_2 s$, $s\bullet_2 b$, applying the other $\bullet_3$ and recombining. Comparing, we see that these are equal without further conditions needed.

On degree 1, we have
\begin{align*}(a\bullet_3\omega)\bullet_3 b&=(a\bullet_2 \omega+[3,\sigma](\nabla\extd a\tens \omega))\bullet_3 b\\
&=(a\bullet_2\omega)\bullet_2 b+[3,\sigma]'(a\omega\tens \nabla\extd b)+[3,\sigma]([2,\sigma](\extd a\tens \omega)\tens \extd b)+[3,\sigma](\nabla\extd a\tens \omega b),\\
a\bullet_3(\omega\bullet_3 b)&=a\bullet_3(\omega\bullet_2 b+[3,\sigma]'(\omega\tens \nabla\extd b))\\
&=a\bullet_2(\omega\bullet_2 b)+[3,\sigma](\nabla\extd a\tens \omega b)+[3,\sigma]'(\extd a\tens [2,\sigma](\omega\tens \extd b))+[3,\sigma]'(a\omega\tens \nabla\extd b)
\end{align*}
by the same method as before. For these expressions to be equal, we need the braid relations (g) according to equation (\ref{32}).

Similarly on degree 2,
\begin{align*}(a\bullet_3(\omega^1\tens \omega^2))\bullet_3 b&=(a\bullet_2 (\omega^1\tens \omega^2))\bullet_3b+[3,\sigma]'(\extd a\tens \omega^1\tens \omega^2)b\\
&=a\omega^1\tens \omega^2b+[3,\sigma](a\omega^1\tens \omega^2\tens \extd b)+[3,\sigma]'(\extd a\tens \omega^1\tens \omega^2b)\\
&=a\bullet_3((\omega^1\tens \omega^2)\bullet_2 b)+[3,\sigma](a\omega^1\tens \omega^2\tens \extd b)=a\bullet_3((\omega^1\tens \omega^2)\bullet_3 b)
\end{align*}
without further conditions needed. The bimodule property on degree 3 works automatically also.

(5) Finally, it remains to check that $j^3$ is a bimodule map (in fact, the stated $\bullet_3$ action was discovered by reverse-engineering this requirement). Similarly to the methods above, we have
\begin{align*}
    j^3(as)
    &=j^2(as)+\nabla_2\nabla\extd(as)\\
    &=a\bullet_2 j^2(s)+(\nabla_2\nabla\extd a)s+[3,\sigma](\nabla\extd a\tens \extd s)+[3,\sigma]'(\extd a\tens \nabla\extd s)+ a\nabla_3\nabla\extd s\\
    &=a\bullet_3(j^2(s)+\nabla_2\nabla\extd s)
    =a\bullet_3j^3(s)
\end{align*}
precisely by the $\nabla_2\nabla\extd$ Leibniz property in Lemma~\ref{lem:nabla3leib}. Similarly for the action from the other side. The properties of $\pi_3$ are clear and imply that $\pi_1\circ\pi_2\circ\pi_3$ splits $j^3$ given that $\pi_1\circ \pi_2$ split $j^2$ in Proposition~\ref{propJ12}.
\end{proof}

We are in fact forced to the above $\bullet_3$ actions by the requirement for $j^3$ to be a bimodule map and an assumption that the actions factor through the iterated derivatives of $a$. This then lays the groundwork for the general case.

\section{General jet bundle $\CJ^k_A$ over a noncommutative algebra}
\label{sec:JAgeneral}

We are now ready to construct bimodules $\CJ^k_A$ over a possibly noncommutative algebra $A$ for any $k$. Following the pattern in Section~\ref{sec:JA123}, we define the \emph{space of quantum symmetric k-forms} $\Omega^k_S$ as the joint kernel of $\Omega^{1 \tens_A k} = \Omega^1\tens_A\cdots\tens_A\Omega^1$ when applying $\wedge$ in any two adjacent places
\begin{align*}
    \Omega^k_S \coloneq \bigcap_{i<k} \ker \left\{\wedge_i\colon \Omega^{1\tens_A k} \rightarrow \Omega^{1\tens_A (i-1)} \tens_A \Omega^{2} \tens_A \Omega^{1\tens_A (k-i-1)}\right\}
\end{align*}
where $\wedge_i$ is taken to act on the $i,i+1$ tensor factors in $\Omega^{1\tens_A k}$. We set $\Omega^0_S=A$, $\Omega^1_S=\Omega^1$ and
 define the $k$-th order jet bimodule as
\[ \CJ^k_A\coloneq \bigoplus_{i=0}^{k}\Omega^i_S=A\oplus\Omega^1\oplus\Omega^2_S\oplus \cdots \oplus\Omega^k_S,\]
together with the  bullet $A$-actions $\bullet_k$ which we will need to define.

As before, we assume a bimodule connection $\nabla\colon \Omega^1 \rightarrow \Omega^{1}\tens_A \Omega^1$ on $\Omega^1$ with generalised braiding $\sigma\colon \Omega^{1}\tens_A \Omega^1 \rightarrow \Omega^{1}\tens_A \Omega^1$. This automatically extends to tensor product connections $\nabla_n$ on higher tensor powers of $\Omega^1$, which can be characterised recursively by
\begin{align*}
    &\nabla_n\colon \Omega^{1 \tens_A n} \rightarrow \Omega^1 \tens_A \Omega^{1 \tens_A n},&
    &\nabla_n = \nabla \tens \id^{n-1} + \sigma_1 (\id \tens \nabla_{n-1}),
\end{align*}
where $\id^k$ denotes the identity on $\Omega^{1\tens_A k}$ and we set $\nabla_1 = \nabla$, $\nabla_0 = \extd$. These connections are bimodule connections with generalised braidings
\begin{align*}
    &\sigma_{\nabla_n}\colon \Omega^{1 \tens_A n} \tens_A \Omega^1 \rightarrow \Omega^1 \tens_A \Omega^{1 \tens_A n},&
    &\sigma_{\nabla_n} = (\sigma \tens \id^n)(\id \tens \sigma_{\nabla_n}) = \sigma_1 \dots \sigma_n,
\end{align*}
where we recall that $\sigma_i$ denotes $\sigma$ acting on the $i,i+1$ tensor factors. It is useful to note that $\nabla_n$ can also be written as
\begin{align}
    \nabla_n &= \sum^{n-1}_{k=0} \sigma_1 \dots \sigma_k (\id^k \tens \nabla \tens \id^{n-1-k})
    = \nabla_l \tens \id^{n-l} + \sigma_1\dots \sigma_l (\id^{l} \tens \nabla_{n-l})
    \label{eq:Splitnablan}
\end{align}
for $l=1,\cdots,n-1$, where in the first expression we think of $\nabla_n$ as acting via $\nabla$ on each factor in $\Omega^{1 \tens_A n} = \Omega^1 \tens_A \dots \tens_A \Omega^1$ followed by swapping the new $\Omega^1$ to the far left  by repeatedly applying $\sigma$. In the second expression, we split the domain of $\nabla_n$ as $\Omega^{1 \tens_A n} = \Omega^{1\tens_A l} \tens_A \Omega^{1\tens_A (n-l)}$ and think of $\nabla_n$ as the tensor product connection on this.

If $\nabla$ is extendable then, similarly to the expression for $R_{\nabla_2}$ in Lemma~\ref{lemR}, we can find a recursive expression for the curvature $R_{\nabla_n} \colon \Omega^{1\tens_A n} \rightarrow \Omega^2 \tens_A \Omega^{1\tens_A n}$ of $\nabla_n$ as
\begin{align}
    R_{\nabla_n}  = R_{\nabla} \tens \id + (\sigma_{\Omega^1,\Omega^2} \tens \id) (\id \tens R_{\nabla_{n-1}})
    = \sum^{n-1}_{k=0} \sigma_{\Omega^1,\Omega^2,1}\dots \sigma_{\Omega^1,\Omega^2,k} (\id^k \tens R_\nabla \tens \id^{n-1-k}).
    \label{eq:HigherCurvatures}
\end{align}
Hence, $R_\nabla = 0$ directly implies $R_{\nabla_n} = 0$ for all $n$. The proof is deferred until later, see Lemma~\ref{lem:TensorProductCurvature}. Armed with these tensor product connections $\nabla_n$, we now define
\[ j^k\colon A\to \CJ^k_A,\quad j^k(s)=\sum_{i=0}^k \nabla^i s;\quad \nabla^i\colon A \rightarrow \Omega^{1 \tens_A i},\quad \nabla^i s = \nabla_{i-1} \nabla^{i-1} s = \nabla_{i-1} \nabla_{i-2} \dots \nabla \extd s\]
where $\nabla^0=\id$ and $\nabla^1=\extd$. We will need to show that the higher order derivatives $\nabla^i$ land in the right places so that $\im \, j^k\subseteq \CJ^k_A$ and that $j^k$  is a bimodule map for the $\bullet_k$ actions on $\CJ^k_A$ and left and right multiplication on $A$.
This will entail extensive use of braided binomials $[{n \atop k},\sigma]$ cf \cite{Ma:book},  in line with the braided integers $[n,\sigma]$ encountered already in Section~\ref{sec:JA123}.

\subsection{Higher order Leibniz rules}
\label{sec:HigherConnectionsA}

In this section, we study the higher order derivatives $\nabla^n$ and establish their key properties.

\begin{lemma}
\label{lem:higherderivetivessymmetric}
    Let $\nabla$ be a torsion free, $\wedge$-compatible bimodule connection. Then $\nabla_n$ restricts to a bimodule connection on $\Omega^n_S$. If in addition $\nabla$ is flat then  $ \im \,\nabla^n \subseteq \Omega^n_S$.
\end{lemma}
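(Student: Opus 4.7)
The plan is to prove both parts by induction on $n$, exploiting the recursive decomposition $\nabla_n=\nabla\tens\id^{n-1}+\sigma_1(\id\tens\nabla_{n-1})$ of the tensor product connection. Once one shows $\nabla_n(\Omega^n_S)\subseteq\Omega^1\tens_A\Omega^n_S$, the restriction of $\nabla_n$ is automatically a bimodule connection on $\Omega^n_S$ (the Leibniz property is inherited, and uniqueness of the generalised braiding forces it to land correctly on the restriction).

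For Part 1, the base case $n=1$ is trivial since $\Omega^1_S=\Omega^1$, and $n=2$ is precisely Lemma~\ref{nablaOmegaS}. For $n\geq 3$, the goal is to show $\wedge_j\nabla_n\omega=0$ for all $\omega\in\Omega^n_S$ and all $j=2,\ldots,n$. The approach is to establish, for each such $j$, a commutation identity
\[\wedge_j\nabla_n=\widetilde{\nabla}^{(j)}\wedge_{j-1}\]
as bimodule maps out of $\Omega^{1\tens_A n}$, where $\widetilde{\nabla}^{(j)}$ is the tensor product connection on $\Omega^{1\tens_A(j-2)}\tens_A\Omega^2\tens_A\Omega^{1\tens_A(n-j)}$ built from $\nabla$ on the $\Omega^1$-factors and the induced $\nabla_{\Omega^2}$ on the middle $\Omega^2$-factor (using the braiding $\sigma_{\Omega^2}$ furnished by $\wedge$-compatibility). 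Granted this identity, $\omega\in\Omega^n_S\subseteq\ker\wedge_{j-1}$ immediately yields $\wedge_j\nabla_n\omega=0$. The key case $j=2$ follows from decomposing $\nabla_n=\nabla_2\tens\id^{n-2}+\sigma_1\sigma_2(\id^2\tens\nabla_{n-2})$, applying $\wedge_2$, and combining the $\wedge$-compatibility identity $\wedge_2\nabla_2=\nabla_{\Omega^2}\wedge$ with the identity $\wedge_2\sigma_1\sigma_2=\sigma_{\Omega^2}(\wedge\tens\id)$ that defines $\sigma_{\Omega^2}$ in Fig.~\ref{fig1}(c). For $j\geq 3$, $\wedge_j$ acts strictly after the new factor produced by $\nabla$, so it commutes past $\nabla\tens\id^{n-1}$ and past $\sigma_1$, reducing the computation to $\wedge_{j-1}\nabla_{n-1}$ inside the second summand, whereupon the inductive hypothesis closes the loop.

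For Part 2, I again induct on $n$, with base case $n=1$ trivial since $\extd s\in\Omega^1=\Omega^1_S$. Assuming $\nabla^{n-1}s\in\Omega^{n-1}_S$, Part 1 immediately gives $\nabla^n s=\nabla_{n-1}\nabla^{n-1}s\in\Omega^1\tens_A\Omega^{n-1}_S$, which handles $\wedge_j\nabla^n s=0$ for $j=2,\ldots,n$. For the remaining $\wedge_1$, write $\nabla^n s=\nabla_{n-1}\nabla_{n-2}(\nabla^{n-2}s)$ and use torsion-freeness (in the form $\wedge_1\sigma_1=-\wedge_1$) together with $\wedge\nabla=\extd$ to compute
\[\wedge_1\nabla_{n-1}\nabla_{n-2}=(\extd\tens\id^{n-2}-\wedge_1(\id\tens\nabla_{n-2}))\nabla_{n-2}=R_{\nabla_{n-2}},\]
the curvature of $\nabla_{n-2}$. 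By equation~(\ref{eq:HigherCurvatures}) (deferred to Lemma~\ref{lem:TensorProductCurvature}), flatness of $\nabla$ propagates to $R_{\nabla_{n-2}}=0$, so $\wedge_1\nabla^n s=0$ and the induction closes.

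The main obstacle is the bookkeeping in Part 1, specifically verifying that the commutation identity makes sense at the level of bimodule maps: individual summands such as $\nabla\tens\id^{n-1}$ or $\wedge_1(\id^2\tens\nabla_{n-2})$ are not themselves bimodule maps, but the specific combinations that actually arise (producing $\nabla_{\Omega^2}$ and $\sigma_{\Omega^2}$) are, precisely because $\nabla$ is $\wedge$-compatible. Without this hypothesis there is no induced connection on $\Omega^2$ past which one could commute a wedge, and the strategy would break down.
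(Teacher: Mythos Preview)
Your proposal is correct and follows essentially the same route as the paper. The only stylistic difference is that you package Part~1 as explicit commutation identities $\wedge_j\nabla_n=\widetilde{\nabla}^{(j)}\wedge_{j-1}$ valid on all of $\Omega^{1\tens_A n}$, whereas the paper argues the vanishing directly on $\Omega^n_S$: for $j\ge 3$ it uses the basic recursion $\nabla_n=\nabla\tens\id^{n-1}+\sigma_1(\id\tens\nabla_{n-1})$ and the inductive hypothesis on $\nabla_{n-1}$, while for the critical case $j=2$ it uses exactly your $l=2$ splitting and the same two ingredients ($\wedge_2\nabla_2=\nabla_{\Omega^2}\wedge$ and the $\sigma_{\Omega^2}$ identity from Fig.~\ref{fig1}(c)). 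Part~2 is identical in both.
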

\begin{proof} We take $n\ge 3$ as earlier cases are clear or covered in Section~\ref{sec:JA123}. For the 1st assertion, we have $\wedge_i\nabla_n=0$ on $\Omega^n_S$ for $i\ge 3$ by using the original definition of $\nabla_n$; for the first term this acts commutes to act on the later factors of $\Omega^n_S$ and for the 2nd term it acts on the output of $\nabla_{n-1}$ which as induction hypothesis we assume lands in $\Omega^1\tens_A\Omega^{n-1}_S$. For $\wedge_2$, we use (\ref{eq:Splitnablan}) with $l=2$. Then $\wedge_2$ vanishes on the first term as $\nabla_2$ has its output in $\Omega^1\tens\Omega^2_S$ and vanishes on the 2nd term using the 2nd part of $\wedge$-compatibility in Fig.~\ref{fig1}(c).

Now assume that $R_\nabla=0$. Then the higher curvatures also vanish by \eqref{eq:HigherCurvatures} and hence
\begin{align*}
    \wedge_1 \nabla^n a &= \wedge_1 \nabla_{n-1} \nabla^{n-1} a
    = \wedge_1 (\nabla \tens \id + \sigma_1( \id \tens \nabla_{n-2}))\nabla_{n-2}\nabla^{n-2} a \\
    &= (\extd \tens \id - \id \wedge \nabla_{n-2})\nabla_{n-2}\nabla^{n-2} a
    = R_{n-2}(\nabla^{n-2} a) = 0.
\end{align*}
For $i\ge 2$, we have $\wedge_i\nabla^na=\wedge_i\nabla_{n-1}\nabla^{n-1}a=0$ since, as inductive hypothesis, $\nabla^{n-1}a$ has image in $\Omega^{n-1}_S$ and by our first assertion, $\nabla_{n-1}$ acts on this. \end{proof}

The higher order Leibniz rules for $\nabla^n (a b)$, as in the  classical case, will need a notion of `binomials'. Motivated by Lemma~\ref{lem:nabla3leib}, we use braided binomials defined recursively as follows. 
\begin{definition}(cf \cite{Ma:book,Ma:hod})
\label{def:binomials}
The braided binomial morphisms $\left[{n \atop k},\sigma \right]\colon \Omega^{1 \tens_A n} \rightarrow \Omega^{1 \tens_A n}$ for $k=0,\dots,n$ are recursively defined as
\begin{align*}
    &\left[{n \atop 0},\sigma \right] = \left[{n \atop n},\sigma \right] = \id^n,&
    &\left[{n \atop k},\sigma \right] = \left(\id \tens \left[{n-1 \atop k-1},\sigma \right]\right) \sigma_1 \dots \sigma_{n-k} +  \left(\id \tens \left[{n-1 \atop k},\sigma \right]\right).
  \end{align*}
Furthermore we define the braided integers, co-braided integers and braided factorials
\begin{align*}
    &[n,\sigma]\coloneq \left[{n \atop 1},\sigma \right]
    = \id + \sigma_{n-1} + \sigma_{n-2} \sigma_{n-1} + \dots + \sigma_{1} \dots \sigma_{n-1},\\
    &[n,\sigma]'\coloneq \left[{n \atop n-1},\sigma \right]
    = \id + \sigma_{1} + \sigma_{2} \sigma_{1} + \dots + \sigma_{n-1} \dots \sigma_{1},\\
    &[n,\sigma]! = [n,\sigma]([n-1,\sigma]! \tens \id).
\end{align*}\end{definition}
For example,
\begin{align*}
    &\left[2,\sigma \right] = \id^2 + \sigma_1,&
    &\left[3,\sigma \right] = \id^3 + \sigma_2 + \sigma_1 \sigma_2,&
    &\left[3,\sigma \right]' = \id^3 + \sigma_1 + \sigma_2 \sigma_1
\end{align*}
are exactly the braided integers we have encountered in Section~\ref{sec:JA123} when dealing with $\CJ^2_A$ and $\CJ^3_A$ respectively.  The present context and conventions are different from \cite{Ma:book,Ma:hod} and also we do not {\em a priori} assume that $\sigma$ obeys the braid relations. We assume $n\ge 1$, albeit the case $n=0$ can be understood formally as the identity on $A$.

\begin{lemma}
\label{lem:binomialssymmetricforms}
If $\nabla\colon \Omega^1 \rightarrow \Omega^1 \tens_A \Omega^1$ is  torsion free, $\wedge$-compatible and extendable then the braided binomials $\left[{n \atop k},\sigma\right]$ obey \begin{align*}
  \left[{n \atop k},\sigma \right]\left(\Omega^{n-k}_S \tens_A \Omega^{ k}_S\right) \subseteq \Omega^{n}_S,\quad
    [n,\sigma]! \left(\Omega^{1\tens n}\right) \subseteq \Omega^n_S.
\end{align*}
\end{lemma}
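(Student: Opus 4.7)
The plan is: the second inclusion follows from the first with $k=1$ by iterating the recursion, so I focus on the first. I would induct on $n$: the cases $k \in \{0, n\}$ are trivial and the base $n = 2$ is the torsion-free identity $\wedge(\id+\sigma)=0$. For the inductive step $n \ge 3$ and $1 \le k \le n-1$, write the stated recursion as $\left[{n \atop k},\sigma\right] = \alpha + \beta$ with $\alpha = (\id\tens\left[{n-1 \atop k-1},\sigma\right])\sigma_1\cdots\sigma_{n-k}$ and $\beta = \id\tens\left[{n-1 \atop k},\sigma\right]$; the goal is to show $\wedge_i \left[{n \atop k},\sigma\right](\Omega^{n-k}_S \tens \Omega^k_S) = 0$ for each $i = 1,\dots,n-1$.

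For $i \ge 2$, I would show each of $\alpha$ and $\beta$ maps individually into $\Omega^1 \tens \Omega^{n-1}_S$. For $\beta$: use $\Omega^{n-k}_S \subseteq \Omega^1 \tens \Omega^{n-k-1}_S$, split off the leading factor, and apply the induction hypothesis. For $\alpha$: the map $\sigma_1\cdots\sigma_{n-k}$ is the generalised braiding $\sigma_{\nabla_{n-k}}$ of the tensor-product connection on $\Omega^{1\tens(n-k)}$; by Lemma~\ref{lem:higherderivetivessymmetric} and uniqueness of generalised braidings, it restricts to a bimodule map $\Omega^{n-k}_S \tens \Omega^1 \to \Omega^1 \tens \Omega^{n-k}_S$, and combining with $\Omega^k_S \subseteq \Omega^1 \tens \Omega^{k-1}_S$ and the induction hypothesis for $\left[{n-1 \atop k-1},\sigma\right]$ yields the claim. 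Since $\wedge_i$ annihilates on the $\Omega^{n-1}_S$ tail for $i \ge 2$, this settles these wedges.

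The hard part will be $i = 1$, which requires cancellation between $\alpha$ and $\beta$. My plan is to expand $\left[{n-1 \atop k-1},\sigma\right]$ and $\left[{n-1 \atop k},\sigma\right]$ once more via the recursion, producing four terms indexed by the appearing sub-binomial. The two cross-terms involving $\left[{n-2 \atop k-1},\sigma\right]$ combine into $(\id^2 \tens \left[{n-2 \atop k-1},\sigma\right])(\id+\sigma_1)(\sigma_2\cdots\sigma_{n-k})$, whose $\wedge_1$-image vanishes by torsion-freeness $\wedge_1(\id + \sigma_1) = 0$. The term with $\left[{n-2 \atop k},\sigma\right]$ commutes with $\wedge_1$, which annihilates on input by $\wedge_1 x = 0$ for $x \in \Omega^{n-k}_S$ (assuming $n-k \ge 2$). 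For the remaining term with $\left[{n-2 \atop k-2},\sigma\right]$, I would commute $\sigma_1$ past $\sigma_j$ for $j \ge 3$ to extract $\sigma_2\sigma_1$ on the left; extendability recasts $\wedge_1\sigma_2\sigma_1$ as $\sigma_{\Omega^1,\Omega^2}(\id\tens\wedge)$, and the remaining composition $(\sigma_3\cdots\sigma_{n-k+1})(\sigma_2\cdots\sigma_{n-k})$ is the generalised braiding $\sigma_{V,\Omega^{1\tens 2}}$ with $V = \Omega^{1\tens(n-k-1)}$. Naturality of this braiding then gives $(\wedge\tens\id_V)\sigma_{V,\Omega^{1\tens 2}} = \sigma_{V,\Omega^2}(\id_V\tens\wedge)$, which reduces the whole expression to something proportional to $\wedge_1 y$, and this vanishes since $y \in \Omega^k_S$. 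Edge cases where a sub-binomial degenerates to identity ($k = 1$, $k = n-1$, or $n - k = 1$) follow by a simpler direct variant of the same argument.
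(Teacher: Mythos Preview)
Your proposal is correct and follows essentially the same strategy as the paper: reduce the factorial statement to the $k=1$ case, induct on $n$, separate the analysis into $\wedge_i$ for $i\ge 2$ (where each of $\alpha,\beta$ individually lands in $\Omega^1\tens_A\Omega^{n-1}_S$) and $\wedge_1$ (where a double expansion of the recursion produces four terms, two of which combine via $\wedge(\id+\sigma)=0$, one of which vanishes directly on $\Omega^{n-k}_S$, and one of which is handled by extendability).

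The one substantive presentational difference is in how you show $\sigma_1\cdots\sigma_{n-k}(\Omega^{n-k}_S\tens_A\Omega^1)\subseteq\Omega^1\tens_A\Omega^{n-k}_S$ for the $\alpha$ term when $i\ge 2$. The paper does this by a direct string-diagram check using $\wedge$-compatibility (Fig.~\ref{fig:binomialssymmetricforms}(a)). You instead invoke Lemma~\ref{lem:higherderivetivessymmetric}: since $\nabla_{n-k}$ restricts to a bimodule connection on the sub-bimodule $\Omega^{n-k}_S$, its generalised braiding $\sigma_{\nabla_{n-k}}=\sigma_1\cdots\sigma_{n-k}$ must restrict as well. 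This is a clean shortcut and is legitimate since that lemma precedes the present one and uses only torsion-freeness and $\wedge$-compatibility. For the $\wedge_1$ analysis of the $\left[{n-2\atop k-2},\sigma\right]$ term, your ``naturality'' argument is really iterated extendability pushing $\wedge$ rightward until it hits the $\Omega^k_S$ input, which is exactly what the paper does via Fig.~\ref{fig:binomialssymmetricforms}(b); just be aware that this is not categorical naturality (we are not in a braided category here) but repeated application of the extendability identity in Fig.~\ref{fig1}(d).
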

\begin{proof} $n\le 2$ are clear or covered Section~\ref{sec:JA3}, hence we assume $n\ge 3$. For the 1st stated result, the $k=0,n$ cases are immediate so we also assume $1\le k\le n-1$. We first prove that
\begin{equation}\label{partiallem}\left[{n \atop k},\sigma \right](\Omega^{n-k}_S \tens_A \Omega^{k}_S) \subseteq \Omega^1 \tens_A \Omega^{n-1}_S\end{equation}
 by induction on $k$. We use the recursive Definition~\ref{def:binomials} of the braided binomials and split $\Omega^{n-k}_S \tens_A \Omega^{k}_S$ in a preferred way for each term. Looking at the first term in the recursive definition, we note that
\begin{align*}
  \sigma_1 \dots \sigma_{n-k} (\Omega^{n-k}_S \tens_A \Omega^{k}_S)\subseteq \sigma_1 \dots \sigma_{n-k}(\Omega^{n-k}_S \tens_A \Omega^1 \tens_A \Omega^{k-1}_S) \subseteq  \Omega^1 \tens_A \Omega^{n-k}_S \tens_A \Omega^{k-1}_S
\end{align*}
as easily seen from the string diagram in Fig.~\ref{fig:binomialssymmetricforms}(a), applying $\wedge_i$ with $i=2,\dots,n-1$  other than $i=n-k+1$ and using $\wedge$-compatibility shown in Fig.~\ref{fig1}(c). We then apply $\id\tens \left[{n-1 \atop k-1},\sigma \right]$ see that the first term lands in $\Omega^1\tens_A\Omega^{n-1}_S$. We assumed the 1st stated assertion for $n=1$ as an induction hypothesis. Now going back to the second term of the recursive definition, we similarly consider $\Omega^{n-k}_S \tens_A \Omega^{k}_S\subseteq
    \Omega^1 \tens_A \Omega^{n-1-k}_S \tens_A  \Omega^{k}_S$ and apply $\id\tens \left[{n-1 \atop k},\sigma \right]$ using our inductive hypothesis to again land in the $\Omega^1\tens_A\Omega^{n-1}_S$. This proves (\ref{partiallem}).

To finish the proof of the first stated result, we have to consider the action of $\wedge_1$. We first assume  $n-2\geq k \geq 2$ and defer $k=1,n$ till later. Using the recursive definition of the braided binomials twice now gives
\begin{align*}
    \wedge_1 &\left[{n \atop k},\sigma \right]
    = \wedge_1 \left(\id \tens  \left[{n-1 \atop k-1},\sigma \right]\right) \sigma_1 \dots \sigma_{n-k}
    +  \wedge_1 \left(\id \tens  \left[{n-1 \atop k},\sigma \right]\right)\\
    &=\wedge_1  \left(\id^2\tens  \left[{n-2 \atop k-2},\sigma \right]\right) \sigma_2 \dots \sigma_{n-k}\sigma_1 \dots \sigma_{n-k}
    + \wedge_1  \left(\id^2 \tens  \left[{n-2 \atop k-1},\sigma \right]\right)\sigma_1 \dots \sigma_{n-k}\\
    &\quad+\wedge_1 \left(\id^2 \tens  \left[{n-2 \atop k-1},\sigma \right]\right) \sigma_2 \dots \sigma_{n-k}
    +\wedge_1  \left(\id^2 \tens  \left[{n-2 \atop k},\sigma \right]\right).
\end{align*}
In all terms, we can move $\wedge_1$ to the right as it commutes with the braided binomial in the other tensor factors. Then the last term vanishes as we assume our expression acts on $\Omega^{n-k}_S \tens_A \Omega^{k}_S$ with at most $k=n-2$. The 2nd and third expressions combine to the right factor $(\id + \sigma_1) \sigma_2 \dots \sigma_{n-k}$ which is again killed by $\wedge_1$.  To see that the first term vanishes, we use the extendability property in Fig.~\ref{fig1}(d) as shown in Fig.~\ref{fig:binomialssymmetricforms}(b) to move $\wedge_1$ to the right where it again vanishes when acting on $\Omega^{n-k}_S \tens_A \Omega^{k}_S$.

It is left to show how $\wedge_1$ acts on $\left[{n \atop k},\sigma \right]$ for $k=1,n-1$, i.e. on the braided integers $[n,\sigma]$ and co-braided integers $[n,\sigma]'$. When applying $\wedge_1$ to $\left[n,\sigma \right]$, we can swap the wedge product with all the terms up to $\sigma_3\dots \sigma_{n-1}$, where it then acts directly on $\Omega^{n-1}_S \tens_A \Omega^1$. The last two terms can be written as $\wedge(\id + \sigma_1) \sigma_2\dots \sigma_{n-1}$ and therefore vanish. In the case of $\left[n,\sigma \right]'$, the first two terms will vanish due to $\wedge(\id + \sigma_1) = 0$. For the rest, we again use extendability (Fig.~\ref{fig1}(d)), to swap $\wedge_1$ with the braidings, which turns it into $\wedge_2$ and makes it act directly on $\Omega^1 \tens_A \Omega^{n-1}_S$. This completes the proof of the first stated result.

Finally, setting $k=1$, we have $[n,\sigma](\Omega_S^{n-1}\tens_A\Omega^1)\subseteq \Omega^n_S$. Iterating this gives the second stated result, using the definition of the braided factorials.
\end{proof}

\begin{figure}
 \[ \includegraphics[scale=.8]{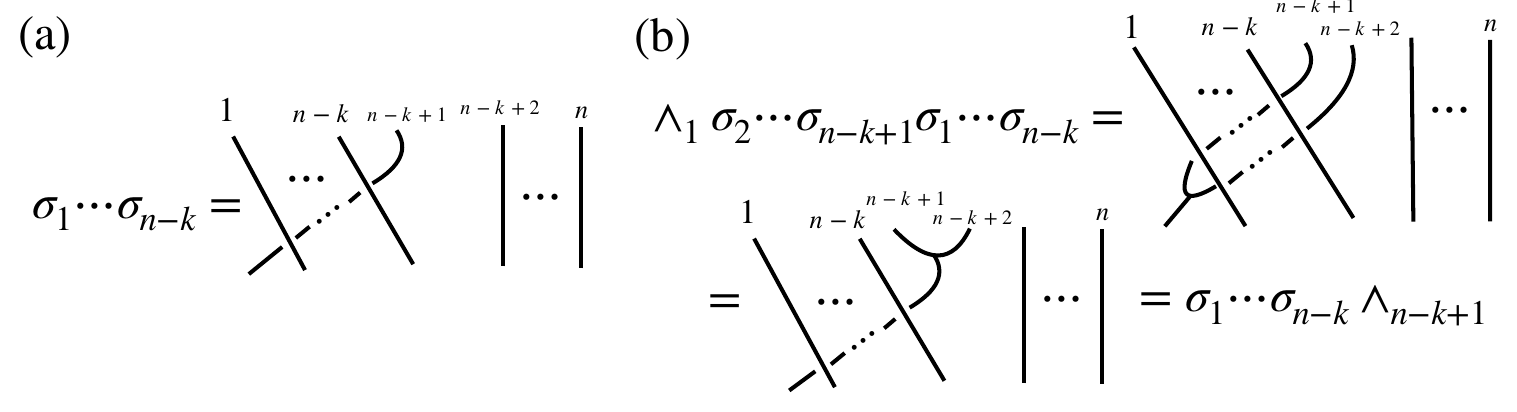}\]
 \caption{\label{fig:binomialssymmetricforms} String diagramss used in the proof of Lemma~\ref{lem:binomialssymmetricforms}.}
\end{figure}

We will also need a further property of the braided binomials, which is a generalisation of  \cite[Thm~10.4.12]{Ma:book},  recovered as the case $m=1$.

\begin{lemma}
\label{lem:propertybimoduleactions}
If $\sigma\colon \Omega^1 \tens_A \Omega^1\rightarrow\Omega^1 \tens_A \Omega^1$ satisfies the braid relations $\sigma_i \sigma_{i+1} \sigma_i= \sigma_{i+1} \sigma_i \sigma_{i+1}$, then the braided binomials satisfy the following relation
for all $n,k,m$ with $n \geq k \geq m$
\begin{align*}
    \left[{n \atop k},\sigma \right] \left( \id^{n-k}\tens \left[{k \atop m},\sigma \right]\right)
    = \left[{n \atop m},\sigma \right]
    \left(\left[{n-m \atop k-m},\sigma \right] \tens \id^{m}\right).
\end{align*}
\end{lemma}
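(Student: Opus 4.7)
\medskip
\noindent\textbf{Proof proposal.} The identity is purely a statement about the morphisms $[{n \atop k},\sigma]$ built from $\sigma$, so the only tool at our disposal is the recursive Definition~\ref{def:binomials} together with the braid relations. The case $m=1$ is precisely \cite[Thm~10.4.12]{Ma:book}, which we take as the base of an induction. I propose a double induction, with outer induction on $n$ and, for each $n$, an inner induction on $m$ with base $m=0$ (trivial, both sides equal $[{n \atop k},\sigma]$) and $m=1$ (the cited reference). The boundary cases $k=m$ and $k=n$ are also immediate because one of the factors collapses to an identity.

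For the inductive step, first expand the LHS using the recursion applied to $[{n \atop k},\sigma]$:
\begin{align*}
\left[{n \atop k},\sigma\right]\!\left(\id^{n-k}\tens\left[{k\atop m},\sigma\right]\right)
&= \Bigl(\id\tens\left[{n-1\atop k-1},\sigma\right]\Bigr)\sigma_1\cdots\sigma_{n-k}\!\left(\id^{n-k}\tens\left[{k\atop m},\sigma\right]\right)\\
&\quad + \Bigl(\id\tens\left[{n-1\atop k},\sigma\right]\Bigr)\!\left(\id^{n-k}\tens\left[{k\atop m},\sigma\right]\right).
\end{align*}
The second summand is simply $\id\tens\bigl([{n-1\atop k},\sigma](\id^{n-1-k}\tens[{k\atop m},\sigma])\bigr)$, to which the induction hypothesis at $n-1$ applies, giving $\id\tens\bigl([{n-1\atop m},\sigma]([{n-1-m\atop k-m},\sigma]\tens\id^m)\bigr)$. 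Expand the RHS similarly by recursion on $[{n \atop m},\sigma]$: the two resulting summands have the same shape, and the analogous ``second term'' matches what we just obtained, so the whole problem reduces to showing that the two ``first summands'' are equal, i.e.
\begin{equation*}
\Bigl(\id\tens\left[{n-1\atop k-1},\sigma\right]\Bigr)\sigma_1\cdots\sigma_{n-k}\!\left(\id^{n-k}\tens\left[{k\atop m},\sigma\right]\right)
= \Bigl(\id\tens\left[{n-1\atop m-1},\sigma\right]\Bigr)\sigma_1\cdots\sigma_{n-m}\!\left(\left[{n-m\atop k-m},\sigma\right]\tens\id^m\right).
\end{equation*}

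The essential combinatorial content, and the main obstacle, is a ``braided commutation lemma'' that lets $\sigma_1\cdots\sigma_{n-k}$ migrate through $\id^{n-k}\tens[{k\atop m},\sigma]$. Specifically, using the braid relations one shows
\[
\sigma_1\cdots\sigma_{n-k}\!\left(\id^{n-k}\tens\left[{k\atop m},\sigma\right]\right)
= \left(\id\tens\left[{k\atop m},\sigma\right]\tens\id^{n-k-1}\right)'\,\sigma_1\cdots\sigma_{n-k},
\]
where the right-hand braided binomial is understood to act on the strands that the chain $\sigma_1\cdots\sigma_{n-k}$ has left in the $m$-special positions. This is the only place where the braid relations are actually needed, and it is proved by induction on the length of the $\sigma$-chain using $\sigma_i\sigma_{i+1}\sigma_i=\sigma_{i+1}\sigma_i\sigma_{i+1}$ to pass $\sigma_i$ through an adjacent $\sigma_{i+1}$ appearing inside $[{k\atop m},\sigma]$. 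A symmetric commutation moves $\sigma_1\cdots\sigma_{n-m}$ through $[{n-m\atop k-m},\sigma]\tens\id^m$ on the right-hand side.

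Once these two commutations are in place, both sides are reduced to expressions of the form $(\id\tens Y)\,\sigma_1\cdots\sigma_{j}$ with $Y$ a product/sum of braided binomials acting on $n-1$ strands, and the outer induction hypothesis at $n-1$, combined with the inner induction hypothesis at $m-1$ (to handle the $[{n-1\atop k-1},\sigma]$ versus $[{n-1\atop m-1},\sigma]$ prefactors), identifies the two $Y$'s. The upshot is that every use of the braid relations is packaged into the single commutation lemma above; the rest is a bookkeeping exercise in matching summands produced by the two recursions. I expect the commutation lemma to be the only genuinely delicate step, since the induction itself is straightforward once it is available.
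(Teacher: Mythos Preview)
Your overall strategy---expand by the recursion, isolate a commutation lemma where the braid relations enter, and proceed by induction on $n$---is exactly the paper's approach, and your ``symmetric commutation'' on the RHS is precisely the key identity the paper uses (their equation~\eqref{eq:2ndconditionbimoduleactions}, cf.\ \cite[Lem.~10.4.11]{Ma:book}):
\[
\sigma_1\cdots\sigma_{n-m}\Bigl(\left[{n-m\atop k-m},\sigma\right]\tens\id^m\Bigr)
=\Bigl(\id\tens\left[{n-m\atop k-m},\sigma\right]\tens\id^{m-1}\Bigr)\sigma_1\cdots\sigma_{n-m}.
\]

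The gap is in your proposed \emph{LHS} commutation. The chain $\sigma_1\cdots\sigma_{n-k}$ and the operator $\id^{n-k}\tens[{k\atop m},\sigma]$ overlap in position $n-k+1$, and there is no clean identity of the form you wrote: after pushing the chain through, the binomial would have to act on non-contiguous strands, which your primed notation papers over. Moreover, even granting some such identity, the $\sigma$-chains on the two sides would have different lengths ($n-k$ versus $n-m$), so the two ``first terms'' cannot be matched as $(\id\tens Y)\sigma_1\cdots\sigma_j$ with common $j$.

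The paper resolves this by \emph{also} expanding $[{k\atop m},\sigma]$ via its recursion inside the first summand. This produces a factor $\sigma_{n-k+1}\cdots\sigma_{n-m}$ which concatenates with $\sigma_1\cdots\sigma_{n-k}$ to give exactly $\sigma_1\cdots\sigma_{n-m}$; the residual binomial $[{k-1\atop m-1},\sigma]$ now sits on strands disjoint from the chain and commutes trivially. One then has three terms, applies the induction hypothesis at $n-1$ to each, and recombines using the RHS commutation above and the recursion in reverse. In particular no inner induction on $m$ is needed---a single induction on $n$ suffices.
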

\begin{proof} This proceeds similarly to the proof for the $m-1$ case and will need a result  (cf. \cite[Lem.~10.4.11]{Ma:book}) that if $\sigma$ obeys the braid relations then
\begin{align}
    \label{eq:2ndconditionbimoduleactions}
    \sigma_1 \dots \sigma_{n-m} \left(\left[{n - m \atop k-m},\sigma \right] \tens \id^{m}\right)
    =\left(\id \tens \left[{n - m \atop k-m},\sigma \right] \tens \id^{m-1}\right) \sigma_1 \dots \sigma_{n-m}.
\end{align}
The proof is analogous to that in \cite{Ma:book} so we omit details. This expresses that the braided binomial can be taken through braid crossings and is the only place where we use the braid relations.

Also observe that if $m=0,k$ (appropriately understood) then there is nothing to prove. So we assume $1\le m\le k-1$. Next, using the Definition~\ref{def:binomials} for the first binomial and in one case also for the 2nd binomial,  we have
\begin{align*}
    &\left[{n \atop k},\sigma \right]
    \left( \id^{n-k}\tens  \left[{k \atop m},\sigma \right]\right)
    = \left\{\left(\id \tens  \left[{n-1 \atop k-1},\sigma \right] \right)
    \sigma_1 \dots \sigma_{n-k}
    +
    \left(\id \tens  \left[{n-1 \atop k},\sigma \right] \right)\right\}
    \left( \id^{n-k}\tens  \left[{k \atop m},\sigma \right]\right)\\
    &=  \left(\id \tens  \left[{n-1 \atop k-1},\sigma \right] \right)
    \sigma_1 \dots \sigma_{n-k}
    \left( \id^{n-k}\tens  \id \tens  \left[{k-1 \atop m-1},\sigma \right]\right)
    \sigma_{n-k+1} \dots \sigma_{n-m}\\
    &\quad+
    \left(\id \tens  \left[{n-1 \atop k-1},\sigma \right] \right)
    \sigma_1 \dots \sigma_{n-k}
    \left( \id^{n-k}\tens  \id \tens  \left[{k-1 \atop m},\sigma \right]\right)
    +
    \left(\id \tens  \left[{n-1 \atop k},\sigma \right] \right)
    \left( \id^{n-k}\tens  \left[{k \atop m},\sigma \right]\right)\\
     &=\left\{ \id\tens   \left[{n-1 \atop k-1},\sigma \right]
    \left( \id^{n-k}\tens   \left[{k-1 \atop m-1},\sigma \right]\right)\right\}\sigma_1 \dots  \sigma_{n-m}\\
    &\quad+
    \left\{ \id\tens\left[{n-1 \atop k-1},\sigma \right]
    \left( \id^{n-k}\tens  \left[{k-1 \atop m},\sigma \right]\right) \right\}\sigma_1 \dots \sigma_{n-k}
    +
    \id\tens  \left[{n-1 \atop k},\sigma \right]
    \left( \id^{n-k}\tens  \left[{k \atop m},\sigma \right]\right)
\end{align*}
where we commute the $\sigma_1\cdots\sigma_{n-k}$ arising from the inductive definition of the first braided binomial to the right since it acts on different tensor factors from the relevant parts of the other braided binomial. In each of the three terms we now use the stated result for $n-1$ as an inductive hypothesis and then recombine in the reverse process to the above. Thus, our expression is
\begin{align*}
        &=\left\{\id \tens \left(\left[{n-1 \atop m-1},\sigma \right] \right)
    \left(\left[{n - m \atop k-m},\sigma \right] \tens  \id^{m-1}\right)\right\}\sigma_1 \dots \sigma_{n-m}\\
    &\quad+
    \left\{\id\tens  \left[{n-1 \atop m},\sigma \right]\left( \left[{n-1-m \atop k-1-m},\sigma \right] \tens  \id^{m}\right)\right\}\sigma_1 \dots \sigma_{n-k}
    + \id\tens  \left[{n-1 \atop m},\sigma \right]\left(\left[{n-1-m \atop k-m},\sigma \right]\tens \id^m\right)\\
    &=\left(\id \tens  \left[{n-1 \atop m-1},\sigma \right] \right)
    \left(\id \tens  \left[{n - m \atop k-m},\sigma \right] \tens  \id^{m-1}\right) \sigma_1 \dots \sigma_{n-m}\\
    &\quad+
    \left(\id\tens  \left[{n-1 \atop m},\sigma \right]\right)\left\{\left( \id \tens  \left[{n-1-m \atop k-1-m},\sigma \right] \tens  \id^{m}\right) \sigma_1 \dots \sigma_{n-k}
    + \left(\id \tens  \left[{n-1-m \atop k-m},\sigma \right]\tens \id^m\right)\right\}\\
    &=\left(\id \tens  \left[{n-1 \atop m-1},\sigma \right] \right)
    \left(\id \tens  \left[{n - m \atop k-m},\sigma \right] \tens  \id^{m-1}\right) \sigma_1 \dots \sigma_{n-m}
    +
    \left(\id\tens  \left[{n-1 \atop m},\sigma \right]\right)\left(\left[{n-m \atop k-m},\sigma \right] \tens  \id^{m}\right)\\
    &=
    \left(\id \tens  \left[{n-1 \atop m-1},\sigma \right] \right)
    \sigma_1 \dots \sigma_{n-m}
    \left(\left[{n - m \atop k-m},\sigma \right] \tens  \id^{m}\right)
    +
    \left(\id\tens  \left[{n-1 \atop m},\sigma \right]\right)\left(\left[{n-m \atop k-m},\sigma \right] \tens  \id^{m}\right)\\
    &=
    \left[{n \atop m},\sigma \right]
    \left(\left[{n - m \atop k-m},\sigma \right] \tens  \id^{m}\right).
\end{align*}
For the 3rd equality we used the inductive Definition~\ref{def:binomials} in reverse and for the 4th we used our initial observation.  The fifth is our inductive Definition~\ref{def:binomials} in reverse again.
\end{proof}

An immediate consequence of Lemmas~\ref{lem:binomialssymmetricforms},~\ref{lem:propertybimoduleactions} is the following algebra structure on the same vector space as the jet bundle and which can be used to express its $\bullet_k$ bimodule structure.

\begin{corollary}
\label{cor:AlgebraQuantumSymmetricForms}
If $\nabla\colon \Omega^1 \rightarrow \Omega^1 \tens_A \Omega^1$ is torsion free, $\wedge$-compatible and extendable, and $\sigma$ obeys the braid relations then the braided binomials restrict to define a unital associative product
\begin{align*}
    &\odot=\left[{i+j \atop j},\sigma \right]\colon \Omega^{i}_S \tens_A \Omega^{j}_S \rightarrow \Omega^{i+j}_S
\end{align*}
making $\Omega_S \coloneq \bigoplus^\infty_{k=0} \Omega^k_S$ into a graded algebra of `quantum symmetric forms'.
\end{corollary}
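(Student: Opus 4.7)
The proof proposal is essentially to assemble the three preceding lemmas into the three standard axioms for a graded algebra, with no new computation required.

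First, I would verify that $\odot := \left[{i+j \atop j},\sigma\right]$, originally defined on $\Omega^{1\tens_A(i+j)}$, actually restricts to a bimodule map $\Omega^i_S \tens_A \Omega^j_S \to \Omega^{i+j}_S$. The target statement is exactly Lemma~\ref{lem:binomialssymmetricforms} with $n=i+j$, $k=j$, so nothing further is needed. That the map respects the $A$-bimodule structure (and in particular descends across $\tens_A$ in the middle) follows because each $\sigma_\ell$ is a bimodule map, and $\odot$ is built from compositions, tensor products with identities, and sums of such maps.

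Next, unitality is immediate from the base case in Definition~\ref{def:binomials}: for any $n$, one has $\left[{n \atop 0},\sigma\right] = \left[{n \atop n},\sigma\right] = \id^n$. Hence for $\omega \in \Omega^n_S$, both $1 \odot \omega$ and $\omega \odot 1$ reduce to $\omega$, so $1 \in A = \Omega^0_S$ is a two-sided unit.

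The main (and only substantive) step is associativity. Given $\omega \in \Omega^i_S$, $\eta \in \Omega^j_S$, $\zeta \in \Omega^k_S$, unwinding the definition gives
\begin{align*}
\omega \odot (\eta \odot \zeta) &= \left[{i+j+k \atop j+k},\sigma\right]\bigl(\id^i \tens \left[{j+k \atop k},\sigma\right]\bigr)(\omega \tens \eta \tens \zeta),\\
(\omega \odot \eta) \odot \zeta &= \left[{i+j+k \atop k},\sigma\right]\bigl(\left[{i+j \atop j},\sigma\right] \tens \id^k\bigr)(\omega \tens \eta \tens \zeta).
\end{align*}
Setting $n = i+j+k$ and applying Lemma~\ref{lem:propertybimoduleactions} with the lemma's $k$ and $m$ equal to $j+k$ and $k$ respectively (so that $n-k = i$, $n-m = i+j$, $k-m = j$), the two expressions coincide. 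This is the only place where the braid relations for $\sigma$ are used, and they are available precisely because we have assumed Leibniz-compatibility (which by Lemma~\ref{lem:nabla3leib} forces the braid relations) as part of the hypotheses of this section; alternatively one may simply take the braid relations as an assumption here, since all three input lemmas already depend on them implicitly or explicitly.

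There is no real obstacle: the construction has been engineered so that the braided Pascal-type identity of Lemma~\ref{lem:propertybimoduleactions} \emph{is} associativity of $\odot$. The only minor care required is to track that the inputs and outputs of the two composites above lie in the appropriate $\Omega^\bullet_S$ sub-bimodules so that the compositions make sense, and that is ensured stepwise by Lemma~\ref{lem:binomialssymmetricforms}.
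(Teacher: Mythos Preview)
Your proposal is correct and follows essentially the same approach as the paper: well-definedness of $\odot$ via Lemma~\ref{lem:binomialssymmetricforms}, associativity by a direct application of Lemma~\ref{lem:propertybimoduleactions} with exactly the substitution you give, and unitality from the base case of the braided binomials. One small over-statement: the braid relations are an explicit hypothesis of the corollary, so there is no need to invoke Leibniz-compatibility here (and Lemma~\ref{lem:binomialssymmetricforms} does not in fact use the braid relations).
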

\begin{proof}
The product here is to take the tensor product and then use to the braided-binomials to symmetrize. That the product then lands in the right space is clear from Lemma~\ref{lem:binomialssymmetricforms}. Let $\omega \in \Omega^i_S, \eta \in \Omega^j_S, \rho \in \Omega^k_S$ and denoting the algebra product by $\odot$, we have
\begin{align*}
    &(\omega \odot \eta) \odot \rho
    = \left( \left[{i+j \atop j},\sigma \right] \omega \tens \eta\right) \odot\rho
    = \left[{i+j+k \atop k},\sigma \right]\left(\left[{i+j \atop j},\sigma \right]\tens \id^k\right) \omega \tens \eta \tens \rho\\
    &=
    \left[{i+k+j \atop k+j},\sigma \right]\left(\id^i\tens \left[{j+k \atop k},\sigma \right]\right) \omega \tens \eta \tens \rho
    =
    \omega\odot \left(\left[{j+k \atop k},\sigma \right] \eta \tens \rho\right)
    = \omega\odot \left(\eta \odot \rho\right).
\end{align*}
where the 3rd equality is Lemma~\ref{lem:propertybimoduleactions}. The unit is that of $A$ in degree 0. \end{proof}

The algebra $\Omega_S$ is a subalgebra (namely, the restriction to quantum-symmetric forms) of a `braided shuffle algebra' ${\rm Sh}_\sigma(\Omega^1)$  defined as the tensor algebra $T_A(\Omega^1)$ with the braided-binomials as product. The case over a field is the shuffle braided-Hopf algebra ${\rm Sh}(V)$ in \cite{MaTao:dua}.  Next, we want to see how the Leibniz compatibility condition $\nabla_2 \sigma_1 = \sigma_2 \nabla_2$  translates to the connections $\nabla_n$.

\begin{lemma}
\label{lem:Nabla&Binomials}
 Assuming Leibniz compatibility Fig.~\ref{fig1}(e), we have for $i = 1,\dots,n-1$
 \begin{align*}
    \nabla_n \sigma_i = \sigma_{i+1} \nabla_n,\quad
    \nabla_n\left[{n \atop k},\sigma \right] = \left(\id \tens  \left[{n \atop k},\sigma \right]\right)\nabla_n.
 \end{align*}
\end{lemma}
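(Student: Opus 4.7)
My plan is to prove the two identities in order, deducing the second from the first.

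For the first identity $\nabla_n\sigma_i=\sigma_{i+1}\nabla_n$, I would argue by induction on $n$, splitting into the cases $i\ge 2$ and $i=1$. For $i\ge 2$, I would use the recursive decomposition $\nabla_n=\nabla\tens\id^{n-1}+\sigma_1(\id\tens\nabla_{n-1})$. The key observation is that $\sigma_i$ for $i\ge 2$ acts on tensor factors disjoint from the single slot hit by $\nabla$, so $(\nabla\tens\id^{n-1})\sigma_i=\sigma_{i+1}(\nabla\tens\id^{n-1})$ holds trivially, while $(\id\tens\nabla_{n-1})\sigma_i=\sigma_{i+1}(\id\tens\nabla_{n-1})$ follows from the inductive hypothesis applied to $\nabla_{n-1}$ (with the index shift). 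Combining, and using that $\sigma_1$ commutes with $\sigma_{i+1}$ when $i\ge 2$ because they act on disjoint factors, yields $\nabla_n\sigma_i=\sigma_{i+1}\nabla_n$.

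The case $i=1$ is the genuine one. Here I would use instead the $l=2$ splitting \eqref{eq:Splitnablan}, namely $\nabla_n=\nabla_2\tens\id^{n-2}+\sigma_1\sigma_2(\id^2\tens\nabla_{n-2})$. Applying this on the right of $\sigma_1$, the first summand produces $(\nabla_2\sigma_1)\tens\id^{n-2}$, which by the Leibniz-compatibility hypothesis (Fig.~\ref{fig1}(e)) equals $(\sigma_2\nabla_2)\tens\id^{n-2}$. The second summand, after commuting $\sigma_1$ past the disjoint factor $\id^2\tens\nabla_{n-2}$, becomes $\sigma_1\sigma_2\sigma_1(\id^2\tens\nabla_{n-2})$. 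Computing $\sigma_2\nabla_n$ the same way gives $(\sigma_2\nabla_2)\tens\id^{n-2}+\sigma_2\sigma_1\sigma_2(\id^2\tens\nabla_{n-2})$, so matching requires $\sigma_1\sigma_2\sigma_1=\sigma_2\sigma_1\sigma_2$. This is precisely the braid relation, which Lemma~\ref{lem:nabla3leib} guarantees under Leibniz compatibility. This is the main (and only substantive) obstacle, and it is exactly where the extra hypothesis bites.

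For the second identity, I would argue that once the first identity is established, any bimodule endomorphism $f$ of $\Omega^{1\tens_A n}$ which is a $\Z$-linear combination of finite compositions of the $\sigma_i$'s automatically satisfies $\nabla_n f=(\id\tens f)\nabla_n$. Indeed, repeated application of $\nabla_n\sigma_i=\sigma_{i+1}\nabla_n$ lets one push $\nabla_n$ past each individual $\sigma_i$ in such a composition, converting the subscripts $i$ into $i+1$; this index shift is precisely what $\id\tens(-)$ encodes on the output. Since the braided binomial $\left[{n\atop k},\sigma\right]$ is, by its inductive Definition~\ref{def:binomials}, exactly such a combination of compositions of the $\sigma_i$ on $\Omega^{1\tens_A n}$, the claim $\nabla_n\left[{n\atop k},\sigma\right]=(\id\tens\left[{n\atop k},\sigma\right])\nabla_n$ follows. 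One can also formalise this as a second induction on $n$ and $k$ directly from the recursion for the braided binomials, applying the first identity to commute $\nabla_n$ past the factors $\sigma_1\cdots\sigma_{n-k}$ and invoking the inductive hypothesis on the smaller binomials in the other tensor slot.
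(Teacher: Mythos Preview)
Your proof is correct. The second identity is handled identically to the paper (braided binomials are sums of words in the $\sigma_i$, so the first identity pushes $\nabla_n$ through term by term). For the first identity you use the same two ingredients as the paper---Leibniz compatibility $\nabla_2\sigma_1=\sigma_2\nabla_2$ and the braid relation $\sigma_1\sigma_2\sigma_1=\sigma_2\sigma_1\sigma_2$ (the latter via Lemma~\ref{lem:nabla3leib})---but you organise the argument differently: you induct on $n$ and split into the cases $i\ge 2$ (reducing to $\nabla_{n-1}$) and $i=1$ (using the $l=2$ decomposition of \eqref{eq:Splitnablan}). The paper instead avoids induction altogether by taking, for each fixed $i$, the three-term splitting $\nabla_n=\nabla_{i-1}\tens\id^{n-i+1}+\sigma_1\cdots\sigma_{i-1}(\id^{i-1}\tens\nabla_2\tens\id^{n-i-1})+\sigma_1\cdots\sigma_{i+1}(\id^{i+1}\tens\nabla_{n-i-1})$ adapted to the position of $\sigma_i$; the middle term absorbs Leibniz compatibility and the last term uses the braid relation in the form $\sigma_1\cdots\sigma_m\sigma_l=\sigma_{l+1}\sigma_1\cdots\sigma_m$. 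The paper's route is slightly cleaner in treating all $i$ uniformly without a case split, while yours makes the inductive structure explicit; both are equally valid.
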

\begin{proof} The second part is an immediate corollary of the first part since the braided binomial is built from sums and products of braids $\sigma_i$ with $i$ in the relevant range. To prove the first part, we split the domain of $\nabla_n$ as $\Omega^{1 \tens_A n} = \Omega^{1\tens_A (i-1)} \tens_A \Omega^{1\tens_A 2} \tens_A \Omega^{1\tens_A (n-i-1)}$ in a similar manner to~\eqref{eq:Splitnablan}, so that
\begin{align*}
    \nabla_n \sigma_i = [(\nabla_{i-1}\tens  \id^2 \tens  \id^{n-i-1})
    + \sigma_1 \dots \sigma_{i-1} (\id^{i-1}\tens  \nabla_2 \tens  \id^{n-i-1})
    + \sigma_1 \dots \sigma_{i+1} (\id^{i-1}\tens  \id^2 \tens  \nabla_{n-i-1})]\sigma_i.
\end{align*}
In the first term  $\sigma_i$ and $\nabla_{i-1}$ do not act on the same factors of $\Omega^{1\tens_A n}$ so we can pass $\sigma_i$ to the left, keeping in mind that it turns into $\sigma_{i+1}$ since $\nabla_{i-1}$ adds an additional $\Omega^1$ to the tensor product. For the second term we use Leibniz compatibility $\nabla_2 \sigma_1 = \sigma_2 \nabla_2$ but remembering the placement of $\nabla_2$ this appears now as $\nabla_2\sigma_i=\sigma_{i+1}\nabla_2$ and we then move $\sigma_{i+1}$ again to the far left as it acts on different factors. Finally, for the 3rd term, we can pass $\sigma_i$ left through $\nabla_{n-i-1}$ as it acts on different spaces and then
use $\sigma_1 \dots \sigma_n \sigma_l = \sigma_{l+1}\sigma_1 \dots \sigma_n$ for $n > l \geq 1$, which follows from the braid relations, which by Lemma~\ref{lem:nabla3leib} are implied by Leibniz compatibility. Hence for all three terms, we can move $\sigma_i$ to the far left as $\sigma_{i+1}$, as required. \end{proof}

With this in hand, we are ready to tackle the higher order Leibniz rules.

\begin{lemma}
\label{lem:leibrule}
Let  $\nabla$ be a Leibniz compatible bimodule connection on $\Omega^1$ as in Fig.~\ref{fig1}(e). Then $\nabla^n\colon A\rightarrow \Omega^{1\tens_A n}$ obeys the $n$-th order Leibniz rule
\begin{align*}
 \nabla^n(ab) = \sum^n_{k=0} \left[{n\atop k},\sigma\right] \nabla^{n-k} a \tens  \nabla^k b,
\end{align*}
for all $a,b \in A$.
\end{lemma}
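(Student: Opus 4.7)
I would prove this by induction on $n$, with the base cases $n=0,1$ being immediate ($n=1$ is just the Leibniz rule for $\extd$, since $\left[{1\atop 0},\sigma\right] = \left[{1\atop 1},\sigma\right] = \id$) and $n=2,3$ already verified in \eqref{eq:2ndOrderLeibniz} and Lemma~\ref{lem:nabla3leib}. For the inductive step, I would start from $\nabla^n(ab) = \nabla_{n-1}\nabla^{n-1}(ab)$ and apply the inductive hypothesis inside, obtaining
\[
\nabla^n(ab) = \nabla_{n-1}\sum_{k=0}^{n-1}\left[{n-1\atop k},\sigma\right]\nabla^{n-1-k}a\tens\nabla^k b.
\]

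Next I would push $\nabla_{n-1}$ past the braided binomial using Lemma~\ref{lem:Nabla&Binomials}, which gives $\nabla_{n-1}\left[{n-1\atop k},\sigma\right] = \bigl(\id\tens\left[{n-1\atop k},\sigma\right]\bigr)\nabla_{n-1}$; this is the step that requires Leibniz compatibility. Then I would expand $\nabla_{n-1}$ acting on $\nabla^{n-1-k}a\tens\nabla^k b \in \Omega^{1\tens_A(n-1-k)}\tens_A\Omega^{1\tens_A k}$ via the splitting \eqref{eq:Splitnablan} with $l = n-1-k$, namely
\[
\nabla_{n-1} = \nabla_{n-1-k}\tens\id^k + \sigma_1\cdots\sigma_{n-1-k}(\id^{n-1-k}\tens\nabla_k),
\]
and use $\nabla_{m}\nabla^{m} = \nabla^{m+1}$ on each piece to rewrite everything in terms of $\nabla^{n-k}a\tens\nabla^k b$ and $\nabla^{n-1-k}a\tens\nabla^{k+1}b$.

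This yields two sums which, after reindexing the second by $j=k+1$, can be written as
\[
\nabla^n(ab) = \sum_{k=0}^{n-1}\bigl(\id\tens\left[{n-1\atop k},\sigma\right]\bigr)\nabla^{n-k}a\tens\nabla^k b + \sum_{k=1}^{n}\bigl(\id\tens\left[{n-1\atop k-1},\sigma\right]\bigr)\sigma_1\cdots\sigma_{n-k}\bigl(\nabla^{n-k}a\tens\nabla^k b\bigr).
\]
Finally, I would combine the two sums term by term. For $1\le k\le n-1$ the coefficient of $\nabla^{n-k}a\tens\nabla^k b$ is
$
\bigl(\id\tens\left[{n-1\atop k},\sigma\right]\bigr) + \bigl(\id\tens\left[{n-1\atop k-1},\sigma\right]\bigr)\sigma_1\cdots\sigma_{n-k},
$
which is exactly $\left[{n\atop k},\sigma\right]$ by the recursive Definition~\ref{def:binomials}. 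The boundary cases $k=0$ and $k=n$ each give $\id^n = \left[{n\atop 0},\sigma\right] = \left[{n\atop n},\sigma\right]$, with $\sigma_1\cdots\sigma_0$ interpreted as $\id$ when its range is empty.

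The main obstacle is purely bookkeeping: the tensor-product splitting of $\nabla_{n-1}$ introduces a string of braidings $\sigma_1\cdots\sigma_{n-1-k}$ that must line up, after reindexing, with the right branch of the recursive definition of the braided binomials. The key structural inputs are (i) Lemma~\ref{lem:Nabla&Binomials} to commute $\nabla_{n-1}$ past binomials, and (ii) the splitting \eqref{eq:Splitnablan} of the tensor product connection; braid relations themselves are used only indirectly (via Lemma~\ref{lem:nabla3leib}) through Leibniz compatibility.
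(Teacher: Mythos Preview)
Your proof is correct and follows essentially the same route as the paper: induction on $n$, commuting $\nabla_{n-1}$ past the braided binomial via Lemma~\ref{lem:Nabla&Binomials}, splitting $\nabla_{n-1}$ on the tensor product via \eqref{eq:Splitnablan}, reindexing one sum, and recombining via Definition~\ref{def:binomials}. The only cosmetic difference is that the paper separates out the boundary terms $k=0$ and $k=n$ explicitly (handling them by the left/right bimodule connection rules for $\nabla_{n-1}$) before reabsorbing them into the sums, whereas you treat all $k$ uniformly by extending the splitting formula to the endpoints with the convention that $\sigma_1\cdots\sigma_0=\id$; both amount to the same computation.
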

\begin{proof} This is clear for $n=0$ and $n=1$ (where it is the bimodule derivation property of $\extd$). We proceed by induction, assuming the result for $n$. Then by Lemma~\ref{lem:Nabla&Binomials},
\begin{align*}
    \nabla^{n+1}(ab) = \nabla_n \nabla^n (ab)
    = \nabla_n \left(\sum^n_{k=0} \left[{n\atop k},\sigma\right] \nabla^{n-k} a \tens  \nabla^k b\right)
   = \sum^n_{k=0} \left(\id \tens  \left[{n\atop k},\sigma\right]\right)  \nabla_n(\nabla^{n-k} a \tens  \nabla^k b).
\end{align*}
The $k=0,n$ terms have $\nabla_n$ acting on $ (\nabla^n a) b$, $a\nabla^n b$ respectively and are computed by the bimodule connection property (where  $\sigma_{\nabla_n} = \sigma_1 \dots \sigma_n$), while for the intermediate terms we use (\ref{eq:Splitnablan}) with $l=n-k$. This gives
\begin{align*}
    \nabla^{n+1}(ab)
    &= \nabla_{n}((\nabla^{n} a) b)+  \nabla_{n}(a (\nabla^{n} b) )
    + \sum^{n-1}_{k=1} \left(\id \tens  \left[{n\atop k},\sigma\right]\right) \nabla_{n-k}(\nabla^{n-k} a) \tens  \nabla^k b \\
    &\quad + \sum^{n-1}_{k=1} \left(\id \tens  \left[{n\atop k},\sigma\right]\right) \sigma_1 \dots \sigma_{n-k} \nabla^{n-k} a \tens  \nabla_k(\nabla^k b) \\
    & = (\nabla^{n+1} a) b
    +\sigma_1 \dots \sigma_n \nabla^{n} a \tens  \extd b+ \extd a \tens   \nabla^{n} b + a  \nabla^{n+1} b.
\\
    &\quad + \sum^{n-1}_{k=1} \left(\id \tens  \left[{n\atop k},\sigma\right]\right) \nabla^{n+1-k} a \tens  \nabla^k b + \sum^{n-1}_{k=1} \left(\id \tens  \left[{n\atop k},\sigma\right]\right) \sigma_1 \dots \sigma_{n-k} \nabla^{n-k} a \tens  \nabla^{k+1} b\\
   &    = (\nabla^{n+1} a) b
    + \sum^{n}_{k=1} \left(\id \tens  \left[{n\atop k},\sigma\right]\right) \nabla^{n+1-k} a \tens  \nabla^k b \\
    &\quad+ \sum^{n-1}_{k=0} \left(\id \tens  \left[{n\atop k},\sigma\right]\right) \sigma_1 \dots \sigma_{n-k} \nabla^{n-k} a \tens  \nabla^{k+1} b)
    + a  \nabla^{n+1} b\\
    &=(\nabla^{n+1} a) b
    + \sum^{n}_{k=1} \left[{n+1\atop k},\sigma\right]
    \nabla^{n+1-k} a \tens  \nabla^k b
    + a  \nabla^{n+1} b
 \end{align*}
where the 3rd equality includes the  $\nabla^{n} a \tens  \extd b$ term as the $k=0$ term of the second sum and the $\extd a \tens   \nabla^{n} b$ terms as the $k=n$ term of the first sum. For the 4th equality, shift $k$ in the second sum so that it sums over the same indices as the first and use the recursive Definition~\ref{def:binomials} in reverse.
\end{proof}

Note that if $\nabla$ is in addition torsion free, flat, $\wedge$-compatible, extendable and $\sigma$ obeys the braid relations then we can write the last result as
\begin{equation}\label{nablancirc} \nabla^n(ab)=\sum_{k=0}^n \nabla^{n-k}a\odot\nabla^k b.\end{equation}
Also note that the collection of all $\nabla_n$ can be viewed as a connection $\nabla$ on $\Omega_S$ acting on the appropriate degree. Likewise, by Lemma~\ref{lem:higherderivetivessymmetric} and (\ref{eq:Splitnablan}) with $l=n-k$ one can see that $\nabla_n$ on $\Omega^{n-k}_S\tens_A\Omega^k_S$ is just the tensor product of the connections on $\Omega^{n-k}_S$ and $\Omega^k_S$ and this data can be viewed as a tensor product connection $\nabla$ on $\Omega_S\tens_A\Omega_S$ acting in the appropriate degrees. Then the 2nd part of Lemma~\ref{lem:Nabla&Binomials} just says geometrically that $\odot$ intertwines these bimodule connections, i.e., \[  \begin{array}{ccc} \Omega_S\tens_A\Omega_S&{\buildrel \nabla\over \to} &\Omega^1\tens_A\Omega_S\tens_A\Omega_S\\
\odot\downarrow & & \downarrow \id\tens\odot\\
\Omega_S&{\buildrel \nabla\over \to}&\Omega^1\tens_A\Omega_S\end{array}\]
commutes. In the notation of \cite[Chap~4]{BegMa}, the map $\odot$ is covariantly constant as a map, i.e. $\doublenabla(\odot)=0$.

\subsection{Jet bimodules $\CJ^k_A$}

With the above theory of higher order derivatives on $\Omega^{1\tens_A n}$ and braided binomials in hand, we have now done all the work to write down the jet bimodules of any order over a possibly noncommutative algebra $A$.

\begin{theorem}
\label{thm:JkA}
Let $\nabla$ be a torsion free, flat, $\wedge$-compatible, extendable and Leibniz-compatible bimodule connection on $\Omega^1$ as shown in Fig. \ref{fig1}(a)-(e). Then
\begin{align*}
    \CJ^k_A = \bigoplus^k_{i=0}  \Omega^i_S = A\oplus \Omega^1_S \oplus \cdots \oplus \Omega^k_S,\quad j^k\colon A \rightarrow \CJ^k_A,\quad j^k(s)=s+\nabla^1 s+\cdots+\nabla^k s
\end{align*}
form an $A$-bimodule and bimodule map with actions $\bullet_k$ given by
\begin{align*}
    &a \bullet_k \omega_j
    = j^{k-j}(a) \odot  \omega_j
    = \sum^k_{i=j} \nabla^{i-j} a \odot  \omega_j ,&
    &\omega_j \bullet_k a
    = \omega_j \odot j^{k-j}(a)
    = \sum^k_{i=j} \omega_j \odot \nabla^{i-j} a .
\end{align*}
where $\omega_j \in \Omega^j_S$ and $\odot$ is the product in Corollary \ref{cor:AlgebraQuantumSymmetricForms} applied in the appropriate degree. Quotienting out $\Omega^k_S$ gives a bimodule surjection $\pi_k\colon \CJ^k_A\to \CJ^{k-1}_A$ such that $\pi_k\circ j^k=j^{k-1}$.
\end{theorem}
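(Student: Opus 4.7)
The plan is to exploit the compact identity $j^{\infty}(ab)=j^{\infty}(a)\odot j^{\infty}(b)$ implicit in equation~(\ref{nablancirc}), which encapsulates the entire combinatorial content: Lemma~\ref{lem:leibrule} provides the higher-order Leibniz rule and Corollary~\ref{cor:AlgebraQuantumSymmetricForms} turns the braided binomials into an associative product $\odot$. Once these are in hand the theorem becomes essentially bookkeeping in truncated degree, using the uniform description $a\bullet_k\omega_j=j^{k-j}(a)\odot\omega_j$ and $\omega_j\bullet_k a=\omega_j\odot j^{k-j}(a)$, into which the ad hoc case-by-case formulas for $k=1,2,3$ of the previous section collapse.

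First I would verify that every term of the stated actions lands in the correct summand of $\CJ^k_A$. For $\omega_j\in\Omega^j_S$ and $0\le i-j\le k-j$, Lemma~\ref{lem:higherderivetivessymmetric} (needing torsion-freeness, flatness and $\wedge$-compatibility) gives $\nabla^{i-j}a\in\Omega^{i-j}_S$, and Corollary~\ref{cor:AlgebraQuantumSymmetricForms} then places $\nabla^{i-j}a\odot\omega_j$ in $\Omega^{i}_S\subseteq\CJ^k_A$. Next, for left-associativity $a\bullet_k(b\bullet_k\omega_j)=(ab)\bullet_k\omega_j$, I would expand both sides: the left becomes the double sum $\sum_{j\le i\le l\le k}\nabla^{l-i}a\odot\nabla^{i-j}b\odot\omega_j$ after applying the definition twice and using associativity of $\odot$, while the right becomes $\sum_{l=j}^{k}\nabla^{l-j}(ab)\odot\omega_j$ and expands via~(\ref{nablancirc}) into the same double sum under the reindexing $m=i-j$. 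Right-associativity is symmetric. For the bimodule compatibility $(a\bullet_k\omega_j)\bullet_k b=a\bullet_k(\omega_j\bullet_k b)$, no Leibniz rule is required at all: both sides collapse, by associativity of $\odot$ alone, to the triple sum
\[
\sum_{\substack{m,n\ge 0\\ m+n+j\le k}}\nabla^{m}a\odot\omega_j\odot\nabla^{n}b,
\]
and matching them is purely reindexing.

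For $j^k$ to be a bimodule map, I would compute $j^k(ab)=\sum_{i=0}^{k}\nabla^{i}(ab)$, apply~(\ref{nablancirc}) term by term to obtain $\sum_{0\le m+n\le k}\nabla^{m}a\odot\nabla^{n}b$, and then recognise this as $\sum_{n=0}^{k}a\bullet_k\nabla^{n}b=a\bullet_k j^k(b)$; the right-module identity is analogous. Finally, the surjection $\pi_k\colon\CJ^k_A\to\CJ^{k-1}_A$ that discards the $\Omega^{k}_S$ summand is automatically a bimodule map, since the formula for $\bullet_k$ restricted to $\bigoplus_{i<k}\Omega^{i}_S$ agrees with $\bullet_{k-1}$ modulo components landing in $\Omega^{k}_S$; and $\pi_k\circ j^k=j^{k-1}$ is immediate from the definition of $j^k$.

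The main obstacle here is not conceptual: every geometric hypothesis in Figure~\ref{fig1} has already been used to produce Lemmas~\ref{lem:higherderivetivessymmetric}--\ref{lem:leibrule} and Corollary~\ref{cor:AlgebraQuantumSymmetricForms}, so no new conditions beyond those needed for $\CJ^3_A$ appear. What remains is careful index-juggling in double and triple sums. The one small thing requiring attention is to confirm that the case-specific formulas of Theorem~\ref{thmJ3} (and of Proposition~\ref{propJ12}) are recovered from the uniform expression $a\bullet_k\omega_j=j^{k-j}(a)\odot\omega_j$; this is a matter of expanding $j^{k-j}(a)$ degree by degree and identifying the resulting pieces with the braided-integer maps $[n,\sigma]$ and $[n,\sigma]'$ used there, which is built into the definition of $\odot$ via the braided binomials.
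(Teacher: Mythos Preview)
Your proposal is correct and follows essentially the same approach as the paper's own proof: both rely on Lemma~\ref{lem:higherderivetivessymmetric} and Corollary~\ref{cor:AlgebraQuantumSymmetricForms} to show everything lands in $\CJ^k_A$, on the compact form~(\ref{nablancirc}) of Lemma~\ref{lem:leibrule} for the action and bimodule-map properties, and on associativity of $\odot$ plus reindexing throughout. The only organisational difference is that the paper first establishes $j^k(ab)=a\bullet_k j^k(b)$ and then reuses this identity (with $k$ replaced by $k-j$) inside the verification of $a\bullet_k(b\bullet_k\omega_j)=(ab)\bullet_k\omega_j$, recognising the inner expression as $j^{k-j}(ab)\odot\omega_j$, whereas you expand both sides directly to the common double sum $\sum_{j\le i\le l\le k}\nabla^{l-i}a\odot\nabla^{i-j}b\odot\omega_j$; these are the same computation.
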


\begin{proof}
(1) First note that both the action $\bullet_k$ and the jet prolongation map $j^k$ land in $\CJ^k_A$. Recall that $\nabla^l a \in \Omega^{l}_S$ due to Lemma \ref{lem:higherderivetivessymmetric}, from which immediately follows that $\im \,j^k \subset \CJ^k_A$. For the action $\bullet_k$, as we assume $\omega_j \in \Omega^{j}_S$, then due to Corollary \ref{cor:AlgebraQuantumSymmetricForms} we have $\nabla^{i-j}a \odot \omega_j \in \Omega^{i}_S$. Since the sum in $a\bullet_k\omega_j$ runs over $i=j,\dots,k$, the highest degree terms will be in $\Omega^k_S$ and  hence $a\bullet_k\omega_j \in \CJ^k_A$.

(2) Next, for $a,s \in A$, we compute
\begin{align*}
    j^k(as) &= \sum^k_{i=0} \nabla^{i}(as)
    = \sum^k_{i=0} \sum^i_{j=0} \nabla^{i-j}a \odot \nabla^js
    = \sum^k_{j=0} \sum^k_{i=j} \nabla^{i-j}a \odot \nabla^js
    = \sum^k_{j=0} j^{k-j}(a) \odot \nabla^js= \sum^k_{j=0} a \bullet_{k} \nabla^js
   \\
   & =a \bullet_{k} j^k(s),\\
  j^k(sa) &= \sum^k_{i=0} \nabla^{i}(sa)
  = \sum^k_{i=0} \sum^i_{j=0} \nabla^{j}s \odot \nabla^{i-j}a
  = \sum^k_{j=0} \sum^k_{i=j} \nabla^{j}s \odot \nabla^{i-j}a
  = \sum^k_{j=0} \nabla^{j}s \odot j^{i-j}(a)= \sum^k_{j=0} \nabla^{j}s \bullet_k a\\
 & = j^k(s)\bullet_{k} a
\end{align*}
so that if $\bullet_k$ are actions then $j^k$ is a bimodule map.

(3) Next, for $\omega_j \in \Omega^j_S$, $j=0,\dots,k$ and $a,b\in A$, we compute
\begin{align*}
  &a\bullet_k (b \bullet_k \omega_j)
  = a\bullet_k (j^{k-j}(b) \odot \omega_j)
  = \sum^k_{i=j} a\bullet_k (\nabla^{i-j}b \odot \omega_j)
  = \sum^k_{i=j} j^{k-i}(a) \odot (\nabla^{i-j}b \odot \omega_j)\\
  &= \sum^k_{i=j} (j^{k-i}(a) \odot \nabla^{i-j}b) \odot \omega_j
  = \sum^k_{i=j} (a \bullet_{k-j} \nabla^{i-j}b) \odot \omega_j
  = (a \bullet_{k-j} j^{k-j}(b)) \odot \omega_j
  = j^{k-j}(ab) \odot \omega_j = (ab) \bullet_k \omega_j
\end{align*}
where we used that $\odot$ is associative by Corollary \ref{cor:AlgebraQuantumSymmetricForms} and that $j^{k-j}$ is a bimodule map w.r.t. $\bullet_{k-j}$ for all $j=0,\dots,k$. In the same manner we can compute
\begin{align*}
    &(\omega_j \bullet_k b) \bullet_k a
    = (\omega_j \odot j^{k-j}(b))\bullet_k a
    = \sum^k_{i=j} (\omega_j \odot \nabla^{i-j}b)\bullet_k a
    = \sum^k_{i=j} (\omega_j \odot \nabla^{i-j}b) \odot j^{k-i}(a)\\
    &= \sum^k_{i=j} \omega_j \odot (\nabla^{i-j}b \odot j^{k-i}(a))
    = \sum^k_{i=j} \omega_j \odot (\nabla^{i-j}b \bullet_{k-j} a)
    = \omega_j \odot (j^{k-j}(b)) \bullet_{k-j} a)
    = \omega_j \odot j^{k-j}(ba)
    = \omega_j \bullet_k (ab)
\end{align*}
It is just left to show that the left and right actions commute. This is
\begin{align*}
    &(a\bullet_k \omega_j) \bullet_k b
    = \sum^{k-j}_{m=0} (\nabla^m a \odot \omega_j) \bullet_k b
    = \sum^{k-j}_{m=0} \sum^{k-j}_{n=m}\nabla^m a \odot \omega_j \odot \nabla^{n-m} b
    = \sum^{k-j}_{n=0} \sum^{n}_{m=0} \nabla^m a \odot \omega_j \odot \nabla^{n-m} b\\
    &= \sum^{k-j}_{n=0} \sum^{n}_{m=0} \nabla^{n-m} a \odot \omega_j \odot \nabla^{m} b
    = \sum^{k-j}_{m=0} \sum^{k-j}_{n=m} \nabla^{n-m} a \odot \omega_j \odot \nabla^{m} b
    = \sum^{k-j}_{m=0} a \bullet_k(\omega_j \odot \nabla^{m} b)
    = a\bullet_k (\omega_j \bullet_k b).
\end{align*}

(4) To see that $\pi_k\colon \CJ^k_A \rightarrow \CJ^{k-1}_A$ is a bimodule map, it is helpful to write the bimodule action $\bullet_k$ in a recursive matter, i.e. in terms of $\bullet_{k-1}$. First we define the $A$-bimodule action $\bullet_0$ on $\CJ^0_A = A$ as simple multiplication in $A$. Then for a generic $k$, the left action can be recursively written as
\begin{align}\label{bulletiterative}
  &a \bullet_k \omega_j = j^{k-j}(a) \odot \omega_j
  = j^{k-j-1}(a) \odot \omega_j + \nabla^{k-j}(a) \odot \omega_j
  = a \bullet_{k-1} \omega_j + \nabla^{k-j} a \odot  \omega_j.
\end{align}
where $\omega_j \in \Omega^j_S$ for $j=0,\dots,k-1$. Note that the added term $\nabla^{k-j} a \odot  \omega_j$ is in $\Omega^k_S$ and therefore in the kernel of $\pi_k$. We then have $\pi_k(a \bullet_k \omega_j) = a \bullet_{k-1} \omega_j = a \bullet_{k-1} \pi_k(\omega_j)$ since $j<k$. For $j=k$ we simply have $a\bullet_k \omega_k = a\omega_k$ and $\pi_k(a\omega_k) = 0$. The property that $\pi_k$ is a right-module map can be shown in the same manner.
It follows by iteration that $\pi_1\circ \pi_2\circ \cdots\circ \pi_k$ splits $j^k$. These considerations are analogous to the recursive analysis presented for the cases $k=1,2,3$ in Section \ref{sec:JA123}. It is also worth noting that by the very definition of $\CJ_A^k$, we have an exact sequence
\[0\to \Omega^k_S\to \CJ^k_A{\buildrel \pi_k\over \longrightarrow} \CJ^{k-1}_A\to 0\]
of underlying bimodules, which seems to be of interest in the subsequent endofunctor generalisation in \cite{FMW}.\end{proof}

The infinite jet bimodule $\CJ^\infty_A$ can then be defined as the limit of the sequence
\begin{align}\label{Jinfty}
\cdots  \to \CJ^k_A{\buildrel \pi_k\over\to} \CJ^{k-1}_A\to\cdots\to \CJ^2_A{\buildrel \pi_2\over\to}\CJ^1_A{\buildrel \pi_1\over \to} \CJ^0_A=A
\end{align}
of bimodules with bimodule maps compatible with $j^k\colon A\to \CJ^k_A$, with projections $\pi_{\infty,k} \colon \CJ^\infty_A \to \CJ^k_A$ for all $k$. This sequence can be thought of formally as a pro-object in the category of $A$-bimodules, in the classical terminology used on \cite{delgado2018lagrangian}. Then the collection $(j_0, j_1, j_2, j_3, . . .)$ of jet evaluations represent a bimodule map from $A$ to this pro-object. Note also that $\CJ^\infty_A$ as a vector space is much bigger than the `quantum symmetric forms' $\Omega_S= \bigoplus^\infty_{k=0} \Omega^k_S$ in Corollary~\ref{cor:AlgebraQuantumSymmetricForms} (since direct sums have elements with support in only a finite number of different degrees). Moreover, the latter has the trivial $A$-bimodule structure, inherited from that of $\Omega^1$ rather than the bullet bimodule structure of  $\CJ^\infty_A$ obtained from the $\bullet_k$ presented above.

\subsection{Reduced jet bundle}\label{secSsigma}

In some cases, we can give an alternative description of $\CJ^k_A$ as built on the degree $\le k$ part of a different graded algebra, namely the `braided-symmetric algebra' $S_\sigma(\Omega^1)$ associated to $\sigma$ obeying the braid relations as is the case in Theorem~\ref{thm:JkA}. This algebra is defined as
\[ S_\sigma(\Omega^1)={T_A\Omega^1\over \oplus_k\ker[k,\sigma]!}={T_A\Omega^1\over \oplus_k\ker \sum_{\pi\in S_k} \sigma_{i_1}\cdots\sigma_{i_{l(\pi)}}}\]
where, for the second expression, we write $\pi=s_{i_1}\cdots s_{i_{l(\pi)}}$ of length $l(\pi)$ as a product of simple reflections $s_i=(i,i+1)$ and $\sigma_i$ denotes $\sigma$ in the $i,i+1$ tensor power. These ideas are familiar in the vector space case where the first point of view is that of a braided-linear space \cite{Ma:book,Ma:hod} and the second has been called a `Nichols-Woronowicz algebra'. The difference is that now we work over $A$ rather than over a field, in fact more in the spirit of a symmetric version of the Woronowicz exterior algebra  on a Hopf algebra \cite{Wor}.

In our case of interest, when $\nabla$ is torsion free, $\wedge$-compatible, extendable and its $\sigma$ obeys the braid relations, we have
\[ {\Omega^1{}^{\tens k}\over \ker [k,\sigma]!}\cong \im  [k,\sigma]! \subseteq \Omega^k_S\]
by the inclusion  in Lemma~\ref{lem:binomialssymmetricforms} and combining in different degrees to an algebra inclusion
\[ S_\sigma(\Omega^1)\subseteq \Omega_S\]
for the product in Corollary~\ref{cor:AlgebraQuantumSymmetricForms}. This follows from the braided-binomial theorem cf.\cite[Thm~10.4.12]{Ma:book} which, in our conventions, and now over $A$, reads $[i,\sigma]!=\left[{i\atop j},\sigma\right]([i-j,\sigma]!\tens[j,\sigma]!)$. We let $S_\sigma^i(\Omega^1)$ be the degree $i$ component of $S_\sigma(\Omega^1)$.

\begin{corollary} Suppose that $\nabla$ obeys the conditions in Theorem~\ref{thm:JkA} and that moreover
\[ \im (\nabla^i)\subseteq \im[i,\sigma]!\]
for all $i\le k$.  Let $d^i\colon A\to S^i_\sigma(\Omega^1)$ be such that $\nabla^i(a)=[i,\sigma]!(d^i(a))$. Then there is a {\em reduced jet bimodule}
\[ \CJ^k_\sigma=\bigoplus_{0\le i\le k}S^i_\sigma(\Omega^1), \quad a\bullet_k \omega= j^{k-j}_\sigma(a) \omega,\quad \omega\bullet_k a=\omega j^{k-j}_\sigma(a),\quad j^k_\sigma(a)=\sum_{0\le i\le k}d^i(a)\]
for a bimodule map $j_\sigma^k\colon A\to \CJ^k_\sigma$ and all $a\in A$, $\omega\in S_\sigma^j(\Omega^1)$. There is also a bimodule map $\CJ^k_\sigma\to \CJ^{k-1}_\sigma$ where we quotient out the top degree and which connects $j^k_\sigma$ to $j^{k-1}_\sigma$.
\end{corollary}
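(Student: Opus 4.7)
The plan is to realise $\CJ^k_\sigma$ as a sub-bimodule of $\CJ^k_A$ via the map induced by $[i,\sigma]!$, and then transfer all the required structure from Theorem~\ref{thm:JkA}. First, I would note that $[i,\sigma]! \colon \Omega^{1 \tens_A i} \to \Omega^{1 \tens_A i}$ vanishes on $\ker[i,\sigma]!$ by definition, so it descends to an injective linear map $\overline{[i,\sigma]!} \colon S^i_\sigma(\Omega^1) \to \Omega^i_S$ (target by Lemma~\ref{lem:binomialssymmetricforms}) with image exactly $\im [i,\sigma]!$. The hypothesis $\im(\nabla^i)\subseteq \im[i,\sigma]!$ thus ensures that $d^i(a)\coloneq \overline{[i,\sigma]!}^{-1}(\nabla^i a)$ is a well-defined linear map $A\to S^i_\sigma(\Omega^1)$, and that $j^k_\sigma(a)$ corresponds under the componentwise injection $\bigoplus_i \overline{[i,\sigma]!}$ precisely to $j^k(a)\in \CJ^k_A$.

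Next, I would verify that the componentwise injection $\iota \colon \CJ^k_\sigma \hookrightarrow \CJ^k_A$ identifies the juxtaposition product on $S_\sigma(\Omega^1)$ with $\odot$ on the image. This is exactly the braided-binomial theorem $[i+j,\sigma]!=\left[{i+j\atop j},\sigma\right]([i,\sigma]!\tens [j,\sigma]!)$: for $\alpha\in S^i_\sigma(\Omega^1)$, $\beta\in S^j_\sigma(\Omega^1)$,
\[
\overline{[i,\sigma]!}(\alpha)\odot \overline{[j,\sigma]!}(\beta)
=\left[{i+j\atop j},\sigma\right]\bigl([i,\sigma]!(\alpha)\tens[j,\sigma]!(\beta)\bigr)
=[i+j,\sigma]!(\alpha\tens\beta)=\overline{[i+j,\sigma]!}(\alpha\beta).
\]
Thus $\iota$ is an algebra morphism from $S_\sigma(\Omega^1)$ into $\Omega_S$, and in particular the subspace $\bigoplus_i \im[i,\sigma]!$ of $\Omega_S$ is closed under $\odot$.

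Given this, the bullet actions of $\CJ^k_A$ automatically restrict to $\iota(\CJ^k_\sigma)$: for $\omega\in S^j_\sigma(\Omega^1)$,
\[
a\bullet_k \iota(\omega)=j^{k-j}(a)\odot \iota(\omega) = \iota\bigl(j^{k-j}_\sigma(a)\,\omega\bigr),
\]
since $j^{k-j}(a)=\iota(j^{k-j}_\sigma(a))$ by the preceding paragraph and $\iota$ respects the product. Likewise on the right. Hence the formulas in the statement define $A$-bimodule actions $\bullet_k$ on $\CJ^k_\sigma$, and $\iota$ becomes a bimodule map. Since $j^k(a)\in\iota(\CJ^k_\sigma)$, the bimodule-map property of $j^k$ in Theorem~\ref{thm:JkA} transfers to show $j^k_\sigma(ab)=a\bullet_k j^k_\sigma(b)$ and $j^k_\sigma(ab)=j^k_\sigma(a)\bullet_k b$.

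Finally, for the projection, I would observe that $\pi_k \colon \CJ^k_A\to \CJ^{k-1}_A$ kills $\Omega^k_S$ and acts as the identity on all lower components, so it restricts to a bimodule surjection $\pi^\sigma_k\colon \CJ^k_\sigma\to \CJ^{k-1}_\sigma$ quotienting out $S^k_\sigma(\Omega^1)$, and $\pi^\sigma_k\circ j^k_\sigma=j^{k-1}_\sigma$ because the analogous identity holds after applying $\iota$. The only real technical point, and what I expect to be the main thing to track carefully, is the compatibility of juxtaposition in $S_\sigma(\Omega^1)$ with $\odot$ under $\iota$; everything else is a direct transfer of structure from Theorem~\ref{thm:JkA} through the injection.
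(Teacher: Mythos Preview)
Your proposal is correct and follows essentially the same approach as the paper. The paper's proof is a one-line remark that everything is immediate once one knows that the product in $S_\sigma(\Omega^1)$ corresponds to the restriction of $\odot$ on $\Omega_S$; this algebra inclusion is established in the paragraph preceding the corollary via the braided-binomial identity $[i+j,\sigma]!=\left[{i+j\atop j},\sigma\right]([i,\sigma]!\tens[j,\sigma]!)$, which is exactly the computation you carry out and correctly identify as the main technical point.
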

\begin{proof} This is immediate once we note that the product in $S_\sigma(\Omega^1)$ corresponds to a restriction of the product $\odot$ of $\Omega_S$. \end{proof}

In nice cases, we will have an equality and the reduced bundle is the same but just expressed differently, usually more algebraically by generators and relations. We will also see an example of a strict reduction in Section~\ref{sec:M2}.


\section{Jets $\CJ^k_E$ for a vector bundle}\label{secJE}

We now consider a general `vector bundle' over the base manifold. In our setting, we work with a bimodule $E$ of `sections'  equipped with a flat connection $\nabla_{E}\colon E\to \Omega^1 \tens_A E$. Here a `sheaf' $E$ in \cite[Chap.~4]{BegMa} is loosely modelled by such data. The flat connection comes with a bimodule map $\sigma_E\colon E\tens_A\Omega^1 \to\Omega^1 \tens_A E$ for the right side Leibniz rule. Similarly to the case of a noncommutative algebra, we want to construct the jet bimodule $\CJ^k_E$, which we set to be
\begin{align*}
    \CJ^k_E = E \oplus \Omega^1 \tens_A E \oplus \dots \oplus \Omega^k_S \tens_A E=\bigoplus_{i\le k}\Omega_S^k\tens_A E.
\end{align*}
One can think of this as $\CJ^k_A\tens_A E$ but in the tensor product we use the right action of $A$ inherited from $\Omega^1$ and not the $\bullet_k$ right action. We will make $\CJ^k_E$ into a $A$-bimodule with jet actions $\bullet_k$ contructed from those of $\CJ^k_A$ and also construct bimodule maps
\begin{align*}
    j^k_E\colon E \rightarrow \CJ^k_E.
\end{align*}
The case $E=A$ and $\nabla_E=\extd$ viewed as $A\to \Omega^1\tens_A A\simeq \Omega^1$ recovers our previous $\CJ^k_A$. We start by explicitly considering the cases $k=1,2$ to see the conditions on $\nabla_E$ that need to be imposed.

The $k=1$ case just needs a generic connection $\nabla_E$, while the $k=2$ case needs $\nabla$ torsion free and $\nabla_E$ flat, extendable and Leibniz compatible as in Fig.~\ref{fig:J2E}, analogous to the ones we have already seen for $\nabla$ in Fig.~\ref{fig1}. As before, Leibniz-compatibility can be viewed as $\doublenabla(\sigma_E)=0$ which now implies the coloured braid relations Fig.~\ref{fig1}(d) since $\doublenabla(\sigma_E)$ is a bimodule map if and only of these hold. This follows from \cite[p.302]{BegMa} given the  braidings $(\sigma_E\tens\id)(\id\tens\sigma)$ on $E\tens_A\Omega^1$ and $(\sigma\tens\id)(\id\tens\sigma_E)$ on $\Omega^1\tens_AE$. Next, we automatically have a tensor product connection which we will denote $\nabla_{(n,E)}$  on $\Omega^{1\tens_A n} \tens_A E$,  as $\nabla$ is a bimodule connection and we can repeatedly tensor by it. The general case in Section~\ref{sec:JEgeneral} uses the induced $n$th-order derivatives and their higher order Leibniz rules to construct $\CJ^k_E$, building on our results for  $\CJ^k_A$ and using the braiding $\sigma_E$ to keep $E$ to the far right.

\subsection{First and second order jet bundles for the vector bundle case}
Let $E$ be an $A$-bimodule. For the split jet bundle we will also need a  bimodule connection $\nabla_{E},\sigma_E$ as discussed, but for the bimodule structure in the case of the first-order jet bimodule, we need only $\sigma_E$.

\begin{proposition}
\label{prop:J1E}
  Let $\sigma_E:E\tens_A\Omega^1\to \Omega^1\tens_AE$ be a bimodule map, and $a\in A, s\in E$, $\omega \in \Omega^1$.

  (1) The actions
  \begin{align*}
    &a\bullet_1 s = as + \extd a\tens s,&
    &s \bullet_1 a = sa + \sigma_E(s \tens \extd a),\\
    &a\bullet_1 (\omega \tens s) = a \omega \tens s,&
    &(\omega \tens s )\bullet_1 a = \omega \tens s a
  \end{align*}
  make $\CJ^1_E=E\oplus \Omega^1\tens_A E$ a bimodule and
  \[ 0\to \Omega^1\tens_A E\to \CJ^1_E\to E\to 0 \]
  (where we quotient out the $\Omega^1\tens_A E$ part) an exact sequence of bimodules.

  (2) Bimodule map splittings $j^1_E$ of this exact sequence are in 1-1 correspondence with bimodule connections $\nabla_E$ with generalised braiding $\sigma_E$ and take the form
  \begin{align*}
      &j^1_E\colon E \rightarrow \Omega^1 \tens_A E,&
      &j^1_E(s) = s + \nabla_E s.
  \end{align*}
\end{proposition}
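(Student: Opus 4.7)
The plan is to treat parts (1) and (2) in turn. For (1), the first step is to check that the stated formulas really land in $\CJ^1_E$: the degree-$0$ pieces use the $A$-bimodule structure on $E$, and the degree-$1$ additions $\extd a\tens s$ and $\sigma_E(s\tens\extd a)$ lie in $\Omega^1\tens_A E$ using that $\sigma_E$ is a bimodule map (and hence balanced over $A$). I would then verify the left associativity $(ab)\bullet_1 s=a\bullet_1(b\bullet_1 s)$ by expanding $b\bullet_1 s=bs+\extd b\tens s$ and applying $a\bullet_1$ term by term: the three resulting summands $a\extd b\tens s$, $\extd a\tens bs$, $abs$ recombine via the Leibniz rule $\extd(ab)=(\extd a)b+a\extd b$ to give $(ab)\bullet_1 s$. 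Right associativity $s\bullet_1(ab)=(s\bullet_1 a)\bullet_1 b$ works analogously, this time using Leibniz for $\extd$ \emph{together} with the bimodule properties $\sigma_E(s\tens (\extd a)b)=\sigma_E(s\tens\extd a)b$ and $\sigma_E(s\tens a\extd b)=\sigma_E(sa\tens \extd b)$.

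For left-right compatibility $a\bullet_1(s\bullet_1 b)=(a\bullet_1 s)\bullet_1 b$, both sides equal $asb+\extd a\tens sb+a\sigma_E(s\tens\extd b)$ after a short computation using left $A$-linearity of $\sigma_E$. Together these checks establish the bimodule axioms for $\bullet_1$. Exactness of the short sequence
\[0\to \Omega^1\tens_A E\to \CJ^1_E\to E\to 0\]
is then immediate from the direct sum decomposition and the definition of $\bullet_1$ on the degree-$1$ piece, which is just the natural bimodule structure on $\Omega^1\tens_A E$ inherited from $\Omega^1$ and $E$.

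For part (2), any bimodule splitting has the form $j^1_E(s)=s+\phi(s)$ for some $k$-linear $\phi\colon E\to\Omega^1\tens_A E$, since the projection onto $E$ must be the identity. The condition $j^1_E(as)=a\bullet_1 j^1_E(s)$ unwinds to $\phi(as)=\extd a\tens s+a\phi(s)$, which is precisely the left-Leibniz rule making $\phi$ a left connection. The condition $j^1_E(sa)=j^1_E(s)\bullet_1 a$ unwinds to $\phi(sa)=\phi(s)a+\sigma_E(s\tens\extd a)$, which is precisely the right bimodule-connection property with generalised braiding the given $\sigma_E$. Conversely, any bimodule connection $\nabla_E$ having generalised braiding $\sigma_E$ defines $j^1_E(s)=s+\nabla_E s$, and reversing the computations shows this is a bimodule map splitting.

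The only genuine bookkeeping point throughout is making sure that every application of $\sigma_E$ or of Leibniz is compatible with tensoring over $A$; this is where bimodule-linearity of $\sigma_E$ is essential, but it is not a real obstacle. The content of the proposition is thus a repackaging of the definition of a bimodule connection as a splitting of the Atiyah-type sequence, with $\sigma_E$ built into the right action on the degree-$0$ component so that admissible $\nabla_E$ are exactly those whose generalised braiding matches the prescribed $\sigma_E$.
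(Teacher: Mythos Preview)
Your proposal is correct and follows essentially the same approach as the paper: both verify the bimodule axioms by direct expansion using the Leibniz rule for $\extd$ and the bimodule-map properties of $\sigma_E$, and both identify splittings with bimodule connections by writing $j^1_E(s)=s+\phi(s)$ and reading off the left and right Leibniz rules from the two $A$-linearity conditions. The paper's proof is slightly more terse but the computations and logic are identical.
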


\begin{proof}
  (1) We first check the bimodule structure. For $a,b\in A$, we have
  \begin{align*}
    a\bullet_1(b \bullet_1 s)
    &=a\bullet_1(bs + \extd b\tens s)
    = abs + \extd a \tens bs + a \extd b\tens s
    = (ab)s + \extd(ab) \tens s
    = (ab) \bullet_1 s,\\
    (s\bullet_1 a) \bullet_1 b
    &= (sa + \sigma_E(s \tens \extd a)) \bullet_1 b
    = sab + \sigma_E(sa \tens \extd b) + \sigma_E(s \tens \extd a) b
    = s(ab) + \sigma_E(s \tens \extd (ab)) = s \bullet_1 (ab),\\
    (a \bullet_1s) \bullet_1 b
    &= (as + \extd a\tens s) \bullet_1 b
    = asb + \sigma_E(as \tens \extd b) + \extd a\tens sb
    = a\bullet_1(sb + \sigma_E(s\tens \extd b))
    = a \bullet_1(s \bullet_1 b).
  \end{align*}
  These properties for the action on $\omega\tens s$ are immediate as the product $\bullet_1$ is simply given by left or right multiplication by $a\in A$. It is immediate from the form of the bimodule structure that the map $\pi_1\colon\CJ^1_E\to E$ setting $\Omega^1\tens_AE$ to zero part a bimodule map and gives the  exact sequence stated.

 (2)  If we are give $\nabla_E$ then we define $j^1_E$ as stated with $j^1_E(s) \in E \oplus \Omega^1 \tens_A E = \CJ^1_E$ as $\nabla_E s \in \Omega^1 \tens_A E$. We check that $j^1_E$ is a bimodule map
  \begin{align*}
    j^1_E(as) &= as + \nabla_E(as)
    = as + \extd a \tens s + a\nabla_E s
    = a \bullet_1 (s + \nabla_E s)
    = a \bullet_1 j^1_E(s),\\
    j^1_E(sa) &= sa + \nabla_E(sa)
    = sa + \nabla_E (s) a + \sigma_E(s\tens \extd a)
    = (s +\nabla_E s)\bullet_1a = j^1_E(a) \bullet_1 a.
  \end{align*}
 Conversely, given a splitting $j^1_E\colon E\to \CJ^1_E$ means $\pi_1\circ j^1_E=\id$, so $j^1_E(s)=s+\nabla_E s$ for some map $\nabla_E\colon E\to \Omega^1_E$. Reversing the proof that $j^1_E$ is a bimodule map immediately gives that $\nabla_E$ is a bimodule connection.
\end{proof}

It should be noted that at $k=1$, this construction is independent of the choice of connection $(\nabla_E,\sigma_E)$. Similarly to before, we say that $\CJ^1_E,\tilde \CJ^1_E$ with respective jet prologantion maps $j^1_E\colon E \to \CJ^1_E$, $\tilde j^1_E\colon E \to \tilde \CJ^1_E$ and bullet actions  $\bullet_1$, $\tilde \bullet_1$, are equivalent as jet bimodules if there is a isomorphism $\varphi \in {}_A\Hom_A(\CJ^1_E,\tilde{\CJ}^1_E)$ such that $\varphi \circ j^1_E = \tilde j^1_E$. We have:

\begin{lemma}\label{lemJ1Eisom}
Consider $\CJ^1_E, \tilde \CJ^1_E$ as constructed from the connections $(\nabla_E, \sigma_E)$, $(\tilde \nabla_E, \tilde \sigma_E)$. Then $\CJ^1_E \simeq \tilde \CJ^1_E$ as jet bimodules.
\end{lemma}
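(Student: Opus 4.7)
The plan is to mimic the strategy used in Lemma~\ref{lemma:equivalenceJ2A} for $\CJ^2_A$, namely to define an explicit bimodule map $\varphi\colon \CJ^1_E \to \tilde \CJ^1_E$ that corrects the $E$-component by the difference of the two connections. Concretely, I will set
\[
\varphi(s + \omega \tens s') = s + (\tilde \nabla_E - \nabla_E)(s) + \omega \tens s'
\]
for $s, s' \in E$ and $\omega \in \Omega^1$. This immediately gives $\varphi(j^1_E(s)) = \varphi(s + \nabla_E s) = s + \tilde\nabla_E s = \tilde j^1_E(s)$, and $\varphi$ is a bijection with inverse obtained by swapping the roles of the two connections.

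The main computational step is to check that $\varphi$ intertwines the bimodule structures $\bullet_1$ and $\tilde \bullet_1$. On the $\Omega^1 \tens_A E$ summand, both actions coincide with ordinary multiplication by $A$ and $\varphi$ restricts to the identity, so there is nothing to check. On the $E$ summand, the left-action check reduces to the observation that $(\tilde\nabla_E - \nabla_E)\colon E \to \Omega^1 \tens_A E$ is a left $A$-module map (the Leibniz terms $\extd a \tens s$ in $\tilde\nabla_E(as)$ and $\nabla_E(as)$ cancel), so
\[
\varphi(a \bullet_1 s) = as + \extd a \tens s + a(\tilde\nabla_E - \nabla_E)(s) = a \,\tilde\bullet_1\, \varphi(s).
\]

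The right-action check is the only subtle point because $\tilde\nabla_E - \nabla_E$ is \emph{not} a right-module map when $\tilde\sigma_E \neq \sigma_E$; the defect is precisely $(\tilde\sigma_E - \sigma_E)(s \tens \extd a)$. However, this defect is exactly compensated by the difference between the old and new right actions, since
\[
(\tilde\nabla_E - \nabla_E)(sa) + \sigma_E(s \tens \extd a) = (\tilde\nabla_E - \nabla_E)(s)\, a + \tilde\sigma_E(s \tens \extd a),
\]
which rewrites $\varphi(s \bullet_1 a)$ as $\varphi(s) \,\tilde\bullet_1\, a$. This cancellation is the essential feature of the Atiyah-sequence construction and is the step I anticipate as the main (albeit small) obstacle. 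Once both actions are verified, the splitting property $\varphi \circ j^1_E = \tilde j^1_E$ and bijectivity complete the proof that $\CJ^1_E \simeq \tilde \CJ^1_E$ as jet bimodules.
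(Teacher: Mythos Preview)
Your proposal is correct and follows essentially the same approach as the paper: both define the same map $\varphi(s + \omega\tens s') = s + (\tilde\nabla_E - \nabla_E)(s) + \omega\tens s'$, verify $\varphi\circ j^1_E = \tilde j^1_E$, and check the bimodule property, with the right-action step hinging on exactly the identity $(\tilde\nabla_E - \nabla_E)(sa) + \sigma_E(s\tens\extd a) = ((\tilde\nabla_E - \nabla_E)s)\,a + \tilde\sigma_E(s\tens\extd a)$ that you isolate. Your write-up is in fact slightly cleaner than the paper's, which contains a couple of typographical slips in the right-action computation.
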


\begin{proof}
Consider the map $\varphi\colon \CJ^1_E \to \tilde \CJ^1_E$ defined on $s+\omega_1 \tens s_1 \in \CJ^1_E$ as
\[
\varphi(s+\omega_1 \tens s_1) = s + (\tilde \nabla_E - \nabla_E)s + \omega_1 \tens s_1.
\]
It automatically fulfils the above commuting diagram as $\varphi \circ j^1_E = \tilde j^1_E$. To check that it is indeed a bimodule map we compute
\begin{align*}
\varphi(a \bullet_1 (s+\omega_1 \tens s_1) ) 
&= \varphi(a s + \extd a \tens s + a \omega_1 \tens s_1)
= as + (\tilde \nabla_E - \nabla_E) (as) + \extd a \tens s + a \omega_1 \tens s_1\\
&= as + a(\tilde \nabla_E - \nabla_E) s + \extd a \tens s + a \omega_1 \tens s_1 = a\tilde \bullet_1\varphi(s+\omega_1 \tens s_1). 
\end{align*}
and 
\begin{align*}
\varphi(a \bullet_1 (s+\omega_1 \tens s_1) ) 
&= \varphi(s a+ \sigma_E (s \tens \extd a) + \omega_1 \tens s_1a)
= sa + (\tilde \nabla_E - \nabla_E) (sa) +  \sigma_E (s \tens \extd a)  + a \omega_1 \tens s_1\\
&= as + (\tilde \nabla_E - \nabla_E) s )a+ \tilde \sigma_E (s \tens \extd a)  + a \omega_1 \tens s_1 
= a\tilde \bullet_1\varphi(s+\omega_1 \tens s_1). 
\end{align*}
due to the right Leibniz rule of the connections $\nabla_E, \tilde \nabla_E$.
\end{proof}

For the 2nd-order jet bimodule we also assume a torsion free connection $\nabla\colon \Omega^1 \to \Omega^1 \tens_A \Omega^1$ on $\Omega^1$ with a generalised braiding $\sigma\colon \Omega^1 \tens_A \Omega^1 \to\Omega^1 \tens_A \Omega^1$. The induced bimodule connection on $\Omega^1\tens_AE$ will be denoted
\begin{align*}
    &\nabla_{(1,E)} \colon \Omega^1\tens_AE \rightarrow \Omega^{1\tens_A 2}\tens_AE,&
    &\nabla_{(1,E)} = \nabla \tens \id_E + \sigma_1 (\id \tens \nabla_E),
\end{align*}
where $\id$ denotes the identity on $\Omega^1$ and $\id_E$ the identity on $E$.  The associated generalised braiding is  $\sigma_{\nabla_{(1,E)}} = \sigma_1 \sigma_{E,2}$. We then define
\begin{align*}
    &\CJ^2_E = E \oplus \Omega^1 \tens_A E \oplus \Omega^2_S \tens_A E,&
    &j^2_E(s) = s + \nabla_E s + \nabla_{(1,E)}\nabla_E s.
\end{align*}

As before, we need to impose some conditions on this datum, shown in Fig.~\ref{fig:J2E} and analogous to the conditions imposed on $\nabla$ when dealing with $\CJ^3_A$, see Fig.~\ref{fig1}. Moreover, note that the identities in Fig.~\ref{fig:J2E} are trivial in the case $\nabla_E = \extd$, $\sigma_E  = \id\colon A\tens_A \Omega^1 \to \Omega^1 \tens_A A$, so we did not already need them as additional data for the case of $\CJ^2_A$.

\begin{figure}
\[ \includegraphics[width=1\textwidth]{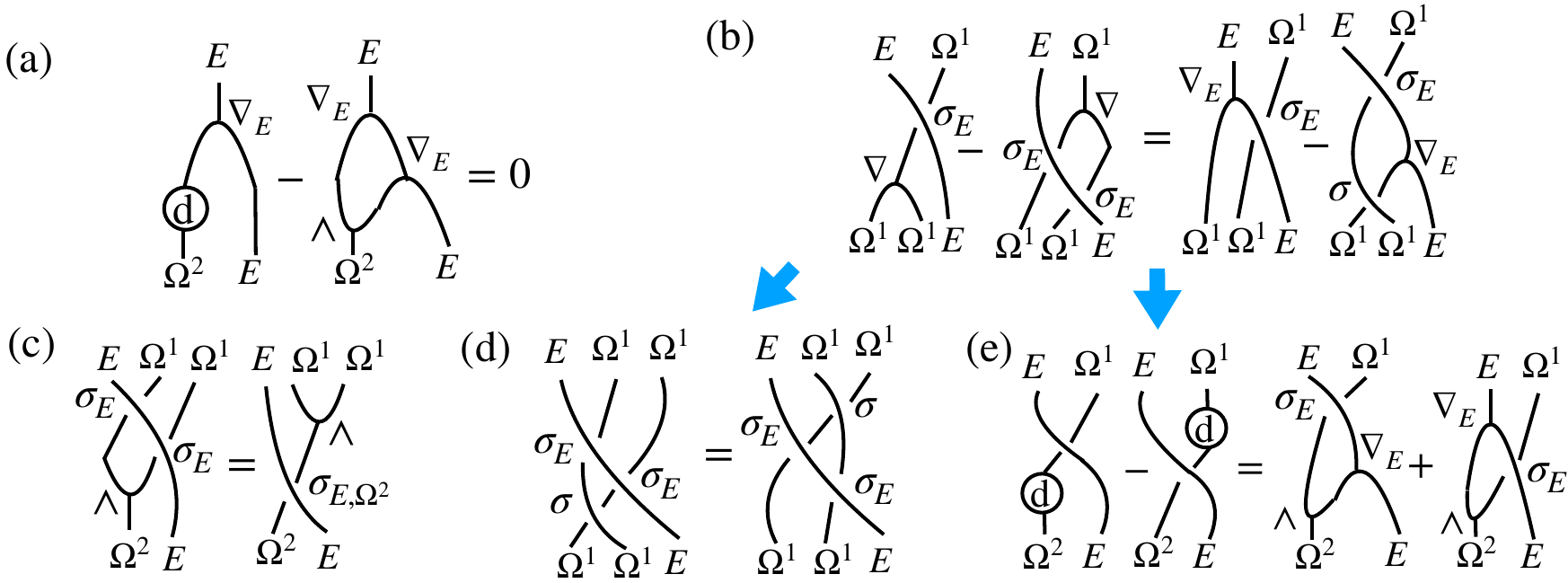}\]
\caption{\label{fig:J2E} Conditions for $\CJ^2_E$. Here, (a) says that $\nabla_E$ is flat, (b) is the Leibniz compatibility condition, (c) the extendability property of a left connection as in \cite[Def. 4.10]{BegMa} and (d) the coloured braid relations implied by (b). Given (c), condition (e) characterises that $R_E$ is a bimodule map and is also implied by (b) if $\nabla$ is torsion free.}
\end{figure}

\begin{proposition}
\label{prop:J2}
 Let $ \nabla$ be torsion free with $\sigma$ obeying the braid relations and $\nabla_E$ flat, extendable and Leibniz compatible (c.f. Fig.~\ref{fig:J2E}(a)-(c)). Then the $k=2$  bullet actions defined by
\begin{align*}
  &a \bullet_2 s = a \bullet_1 s +  (\nabla \extd a) \tens s,&
  &s \bullet_2 a = s \bullet_1 a + \sigma_{E,2}\sigma_{E,1} (s\tens \nabla\extd a),\\
  &a \bullet_2 (\omega \tens s) = a \bullet_1 (\omega \tens s) + [2,\sigma]_1 (\extd a \tens \omega \tens s),&
  &(\omega \tens s) \bullet_2 a = (\omega \tens s) \bullet_1 a + [2,\sigma]_1\sigma_{E,2} (\omega \tens s \tens \extd a),\\
  &a \bullet_2 (\omega^1 \tens \omega^2 \tens s) = a\omega^1 \tens \omega^2 \tens s,&
  &(\omega^1 \tens \omega^2 \tens s) \bullet_2 a = \omega^1 \tens \omega^2 \tens sa
\end{align*}
for $a\in A, s\in E, \omega  \in \Omega^1, \omega^1 \tens \omega^2 \in \Omega^2_S$, make $\CJ_E^2$ a bimodule and $j^2_E$ a bimodule map.
\end{proposition}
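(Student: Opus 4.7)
The proof plan is to mirror the strategy of Theorem~\ref{thmJ3}, treating $\nabla_E$ here in the role played there by $\extd$. I would organise the work as follows: (i) check that each bullet expression lands in the stated summand of $\CJ^2_E$; (ii) derive a second-order Leibniz rule for the composite $\nabla_{(1,E)}\nabla_E$ on $E$; (iii) verify the two action axioms and the bimodule compatibility degree by degree; (iv) show that $j^2_E$ is a bimodule map directly from the Leibniz rule.

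For step (i), the only non-immediate checks are that $\nabla\extd a \in \Omega^2_S$ (which is torsion-freeness of $\nabla$), that $[2,\sigma]_1 = \id + \sigma_1$ has image killed by $\wedge_1$ (again torsion-freeness), and that $\sigma_{E,2}\sigma_{E,1}(s\tens\nabla\extd a) \in \Omega^2_S\tens_A E$, which follows from bimodule-linearity of $\sigma_E$ and the fact that $\sigma_E$ intertwines with $\wedge$-valued maps only on the $\Omega^1\tens_A\Omega^1$ factor, so the quantum-symmetry property of $\nabla\extd a$ passes through. The analogous claim for $[2,\sigma]_1\sigma_{E,2}$ is the same observation applied in reverse.

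For step (ii), which is the main computation, I would establish two Leibniz identities:
\begin{align*}
  \nabla_{(1,E)}\nabla_E(as) &= (\nabla\extd a)\tens s + [2,\sigma]_1(\extd a\tens\nabla_E s) + a\,\nabla_{(1,E)}\nabla_E s,\\
  \nabla_{(1,E)}\nabla_E(sa) &= (\nabla_{(1,E)}\nabla_E s)a + [2,\sigma]_1\sigma_{E,2}(\nabla_E s\tens\extd a) + \sigma_{E,2}\sigma_{E,1}(s\tens\nabla\extd a).
\end{align*}
The first is direct: expand $\nabla_E(as) = \extd a\tens s + a\nabla_E s$, apply $\nabla_{(1,E)}$, use the definition $\nabla_{(1,E)} = \nabla\tens\id_E + \sigma_1(\id\tens\nabla_E)$ on the first piece and the bimodule-connection left Leibniz rule on the second, and recognise $\sigma_1 + \id = [2,\sigma]_1$. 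The second, which is the hard step, uses $\nabla_E(sa) = (\nabla_E s)a + \sigma_E(s\tens\extd a)$, the right Leibniz rule for $\nabla_{(1,E)}$ with its braiding $\sigma_{\nabla_{(1,E)}} = \sigma_1\sigma_{E,2}$, and the Leibniz-compatibility condition Fig.~\ref{fig:J2E}(b) to rewrite $\nabla_{(1,E)}\sigma_E$; terms then recombine using the braid relations (Fig.~\ref{fig:J2E}(d), which follow from Leibniz compatibility just as in Lemma~\ref{lem:nabla3leib}) into the stated $[2,\sigma]_1\sigma_{E,2}$ expression.

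For step (iii), the degree-0 axiom $a\bullet_2(b\bullet_2 s) = (ab)\bullet_2 s$ reduces, after decomposing $b\bullet_2 s$ into its $\bullet_1$ part plus $\nabla\extd b\tens s$ and using bimodule-linearity of $\sigma_1$, to exactly the first Leibniz identity above; the right-hand analogue uses the second. The degree-1 axioms on $\omega\tens s$ reduce, after expanding all $[2,\sigma]$'s, to the braid relation for $\sigma$ composed with $\sigma_E$ on appropriate tensor factors, which is the coloured braid relation Fig.~\ref{fig:J2E}(d). The degree-2 axioms are automatic since $\bullet_2$ on $\Omega^2_S\tens_A E$ is plain multiplication on the outer factors. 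Left-right commutation of the actions is checked the same way: on degree 0 both sides produce the same four terms after applying both Leibniz rules; on degree 1 commutation is again equivalent to the coloured braid relation; on degree 2 it is immediate. Finally, applying both Leibniz identities to $j^2_E(as)$ and $j^2_E(sa)$ and regrouping gives $a\bullet_2 j^2_E(s)$ and $j^2_E(s)\bullet_2 a$ respectively.

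The main obstacle will be the right-hand second-order Leibniz identity in step (ii), because $\nabla_{(1,E)}\sigma_E$ must be rewritten in terms of $\sigma_{E,2}$ composed with a derivative acting on $\extd a$, which is precisely the content of Fig.~\ref{fig:J2E}(b); all of the degree-1 bimodule and commutation checks then reduce to an application of the coloured braid relations, which by the $\nabla_E$-analogue of Lemma~\ref{lem:nabla3leib} are already guaranteed.
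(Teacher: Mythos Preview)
Your proposal is correct and follows essentially the same structure as the paper's proof: (i) target checks, (ii) second-order Leibniz rules for $\nabla_{(1,E)}\nabla_E$, (iii) action axioms degree by degree, (iv) bimodule property of $j^2_E$. Two small points are worth tightening. First, you omit the check that $j^2_E$ actually lands in $\CJ^2_E$, i.e.\ that $\wedge_1\nabla_{(1,E)}\nabla_E s=0$; this is where flatness of $\nabla_E$ is used (one gets $R_E(s)=0$ after using torsion-freeness of $\nabla$), and the paper treats it as a separate step. Second, in step~(i) your justification that $\sigma_{E,2}\sigma_{E,1}(s\tens\nabla\extd a)\in\Omega^2_S\tens_AE$ is precisely the extendability condition Fig.~\ref{fig:J2E}(c), not a generic bimodule-linearity fact; the paper invokes this explicitly via $\wedge_1\sigma_{E,2}\sigma_{E,1}=\sigma_{E,\Omega^2}(\id\tens\wedge)$.

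One further clarification: in step~(iii) at degree~0, the relevant Leibniz rule is the one for $\nabla\extd$ on $A$ (equation~\eqref{eq:2ndOrderLeibniz}), not your first identity for $\nabla_{(1,E)}\nabla_E$ on $E$; the latter is what drives step~(iv). Likewise the right-action axiom at degree~0 combines the $\nabla\extd$ rule on $A$ with the coloured braid relations to pull $[2,\sigma]_1$ through $\sigma_{E,2}\sigma_{E,1}$. Your step~(ii) identities are used exactly where the paper uses them, namely for the bimodule property of $j^2_E$.
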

\begin{proof}
(1) We need to check that the product lands in $\CJ^2_E$, namely that the terms added to $\bullet_1$ are in $\Omega^2_S \tens_A E = \ker \wedge \tens_A E$. Since we assume $ \nabla$ to be torsion free, we have $\wedge  \nabla = \extd$ and $\wedge (\id +  \sigma) = 0$. In the case of $a\bullet_2 s$ and $s\bullet_2 a$, this gives $\wedge_1 ( \nabla \extd a \tens s) = \extd^2 a \tens s = 0$ and
\begin{align*}
  \wedge_1 \sigma_{E,2}\sigma_{E,1} (s \tens \nabla\extd a)
  = \sigma_{E,\Omega^2}(s \tens \wedge \nabla\extd a) = 0,
\end{align*}
where we used that $\sigma_E$ is extendable (condition in Fig \ref{fig:J2E}(c)). The added terms in $a\bullet_2 (\omega \tens s)$ and $(\omega \tens s)\bullet_2 a$ are quantum symmetric due to $\wedge (\id +  \sigma) = 0$ and $a \bullet_2 (\omega^1 \tens \omega^2 \tens s)$, $(\omega^1 \tens \omega^2 \tens s)\bullet_2 a$ are automatically quantum symmetric as $\omega^1\tens \omega^2 \in \Omega^2_S$.

(2) We check that $\bullet_2$ makes $\CJ^2_E$ into a bimodule, starting with the action $\bullet_2$ on $s\in E$. Let $a,b\in A$ and recall that we have the 2nd-order Leibniz rule for $ \nabla \extd$ in equation \eqref{eq:2ndOrderLeibniz}. Then
\begin{align*}
  a &\bullet_2(b \bullet_2 s)
  = a \bullet_2 (b \bullet_1 s +  \nabla \extd b \tens s)
  = a \bullet_1 (b \bullet_1 s) + \nabla \extd a \tens bs + [2,\sigma]_1 (\extd a \tens \extd b \tens s) + a \nabla \extd b \tens s\\
  &= (a b) \bullet_1 s + \nabla \extd(ab) \tens s = (a b) \bullet_2 s,\\
  (s&\bullet_2 b)\bullet_2 a
  = (s\bullet_1 b +  \sigma_{E,2}\sigma_{E,1} (s\tens \nabla\extd b))\bullet_2 a \\
  &= (s \bullet_1 b) \bullet_1 a
  + \sigma_{E,2}\sigma_{E,1} (sb \tens \nabla\extd a)
  + [2,\sigma]_1 \sigma_{E,2}\sigma_{E,1} (s \tens \extd b \tens \extd a)
  + \sigma_{E,2}\sigma_{E,1} (s\tens (\nabla\extd b) a)
  \\
  &= s \bullet_1 (ba)
  + \sigma_{E,2}\sigma_{E,1} (s \tens b\nabla\extd a)
  + \sigma_{E,2}\sigma_{E,1} [2,\sigma]_2 (s \tens \extd b \tens\extd a)
  + \sigma_{E,2}\sigma_{E,1} (s\tens (\nabla\extd b) a)\\
  &= s \bullet_1 (ba) + \sigma_{E,2}\sigma_{E,1} (s \tens \nabla\extd (ba)) =  s \bullet_2 (ba).\end{align*}
  These computations are analogous to the $E=A$ case, but now with the coloured braid relations Fig.~\ref{fig:J2E}(d) for the 3rd equality of the second group, which is, however implied by Leibniz-compatibility. Similarly,
\begin{align*}  (a&\bullet_2 s) \bullet_2 b\\
&
  = (a\bullet_1 s +  \nabla \extd a \tens s) \bullet_2 b
  = (a\bullet_1 s) \bullet_1 b +\sigma_{E,2}\sigma_{E,1} (as \tens  \nabla \extd b)
  + [2, \sigma]_1\sigma_{E,2} (\extd a \tens s \tens \extd b) +  \nabla \extd a \tens sb\\
  &= a\bullet_1 (s \bullet_1 b)
  + a \sigma_{E,2}\sigma_{E,1} (s \tens  \nabla \extd b)
  + [2, \sigma]_1 (\extd a \tens \sigma_E(s \tens \extd b))
  + \nabla \extd a \tens sb
  \\
  &= a\bullet_2 (s \bullet_1 b)
  + a \bullet_2 \sigma_{E,2}\sigma_{E,1} (s \tens  \nabla \extd b)
  = a\bullet_2 (s \bullet_2 b).
\end{align*}
 In the case of $\omega \tens s \in \Omega^1 \tens_A E$ we have
\begin{align*}
  a\bullet_2(b \bullet_2 (\omega \tens s))
 & = a\bullet_2(b \bullet_1 (\omega \tens s) + [2,\sigma]_1 (\extd b \tens \omega \tens s))\\
  &= a\bullet_1 (b \bullet_1(\omega \tens s)) + [2,\sigma]_1 (\extd a \tens b\omega \tens s) + [2,\sigma]_1 (a\extd b \tens \omega \tens s)\\
  &= (ab) \bullet_1 (\omega \tens s) + [2,\sigma]_1 (\extd(a b) \tens \omega \tens s)
  = (ab)\bullet_2 (\omega \tens s),\\
  ((\omega \tens s) \bullet_2 b) \bullet_2 a
 & =  ((\omega \tens s) \bullet_1 b + [2,\sigma]_1\sigma_{E,2} (\omega \tens s \tens \extd b)) \bullet_2 a\\
  &=((\omega \tens s) \bullet_1 b) \bullet_1 a + [2,\sigma]_1 \sigma_{E,2} (\omega \tens sb \tens \extd a) + [2,\sigma]_1 \sigma_{E,2} (\omega \tens s \tens \extd b a)\\
  &=(\omega \tens s)\bullet_1(ba) + [2,\sigma]_1 \sigma_{E,2} (\omega \tens s \tens \extd (ba))
  = (\omega \tens s) \bullet_2(ba),\\
  (a\bullet_2 (\omega \tens s)) \bullet_2 b
  &= (a \bullet_1 (\omega \tens s) + [2, \sigma]_1 (\extd a \tens \omega \tens s)) \bullet_2 b\\
  &= (a \bullet_1 (\omega \tens s)) \bullet_1 b + [2, \sigma]_1 \sigma_{E,2} (a\omega \tens s\tens \extd b)
  + [2, \sigma]_1 (\extd a\tens \omega \tens  sb)\\
  &= a \bullet_1 ((\omega \tens  s) \bullet_1 b) + a \bullet_2 [2, \sigma]_1 \sigma_{E,2} (\omega \tens  s\tens  \extd b)
  + [2, \sigma]_1 (\extd a\tens  \omega \tens  sb)\\
  &=a\bullet_2 ((\omega \tens  s) \bullet_1 b + [2, \sigma]_1 \sigma_{E,2} (\omega \tens  s \tens  \extd b))
  = a\bullet_2 ((\omega \tens  s) \bullet_2 b).
\end{align*}
Again for the actions on $\omega^1 \tens  \omega^2 \tens  s \in \Omega^2_S \tens_A E$ these properties are straight forward as $\bullet_2$ is simply multiplication by $a\in A$.

(3) We need to check that the map $j^2_E$ is well-defined, namely that $\nabla_{(1,E)} \nabla_E s \in \Omega^2_S \tens_A E$. Recall that we assume $\nabla_E$ to be flat, i.e. $R_E = (\extd \tens  \id - \id \wedge \nabla_E)\nabla_E = 0$, shown in Fig. \ref{fig:J2E}(i). Applying $\wedge_1$ to $\nabla_{(1,E)} \nabla_E s$ gives
\begin{align*}
  \wedge_1 \nabla_{(1,E)} \nabla_E s = \wedge_1(\nabla \tens  \id + \sigma_1 \id \tens  \nabla_E) \nabla_E s
  = (\extd \tens  \id - \id \wedge \nabla_E) \nabla_E s
  = R_E(s) = 0,
\end{align*}
where we have used that $\nabla$ is torsion free.

(4) To complete the proof, we show that $\bullet_2$ makes $j^2_E$ a bimodule map. We first compute the 2nd-order Leibniz rule for $\nabla_{(1,E)}\nabla_E$,
\begin{align*}
  \nabla_{(1,E)} \nabla_E (as)
  &= \nabla_{(1,E)}( \extd a \tens  s + a\nabla_E s)
  = \nabla \extd a \tens  s + \sigma_1 (\extd a \tens  \nabla_E s) + \extd a \tens  \nabla_E s + a\nabla_{(1,E)}\nabla_E s\\
  &=  \nabla \extd a\tens  s + [2,\sigma]_1 (\extd a \tens  \nabla_E s) + a \nabla_{(1,E)}\nabla_E s.
\end{align*}
Using this property we can show
\begin{align*}
  j^2_E(as) &= j^1_E(as) + \nabla_{(1,E)}\nabla_E(as)
  = a\bullet_1 j^1_E(s) + \nabla \extd a\tens  s + [2,\sigma]_1 (\extd a \tens  \nabla_E s) + a \nabla_{(1,E)}\nabla_E s \\
  &= a\bullet_2 j^1_E(s) +a \bullet_2 \nabla_{(1,E)}\nabla_E s = a\bullet_2 j^2_E(s),
\end{align*}
where we used $j^1_E(as) = a\bullet_1 j^1_E(s)$. For the other side, we again compute a 2nd-order Leibniz rule for $\nabla_{(1,E)}\nabla_E$, where we will need the Leibniz compatibility condition for $\nabla_E$ shown in Fig.~\ref{fig:J2E}(b). Using the coloured braid relations implied by this, one can show that this condition is compatible with the tensor product $\tens_A$ in a similar manner to the proof of Lemma~\ref{lemR}. The second order Leibniz rule then reads
\begin{align*}
  \nabla_{(1,E)}& \nabla_E(sa) = \nabla_{(1,E)} (\nabla_E s a + \sigma_E (s\tens  \extd a))\\
  &= (\nabla_{(1,E)} \nabla_E s) a + \sigma_1 \sigma_{E,2} (\nabla_E s \tens  \extd a) + (\nabla \tens  \id) \sigma_E (s\tens  \extd a) + \sigma_1 (\id \tens  \nabla_E) \sigma_E (s\tens  \extd a) \\
  &= (\nabla_{(1,E)} \nabla_E s) a + \sigma_1 \sigma_{E,2} (\nabla_E s \tens  \extd a) + \sigma_{E,2} \sigma_{E,1} (s \tens  \nabla \extd a) + \sigma_{E,2} (\nabla_E s \tens  \extd a)\\
  &= (\nabla_{(1,E)} \nabla_E s) a + [2,\sigma]_1 \sigma_{E,2} (\nabla_E s \tens  \extd a) + \sigma_{E,2} \sigma_{E,1} (s \tens  \nabla \extd a)
\end{align*}
where used Leibniz compatibility for the 3rd equality. We therefore have
\begin{align*}
  j_E^2(sa) &= j_E^1(sa) + \nabla_{(1,E)}\nabla_E(sa)\\
 & = j_E^1(s) \bullet_1 a + (\nabla_{(1,E)} \nabla_E s) a + [2,\sigma]_1 \sigma_{E,2} (\nabla_E s \tens  \extd a) + \sigma_{E,2} \sigma_{E,1} (s \tens  \nabla \extd a)\\
  &= j_E^1(s) \bullet_2 a + (\nabla_{(1,E)} \nabla_E s) \bullet_2 a
  = j_E^2(s) \bullet_2 a.
\end{align*}
\end{proof}

We again have a bimodule map surjection $\pi_2$ defined by quotienting out the top degree. As for $k=1$, one can again consider in what sense these constructions depend on $(\nabla,\sigma)$ and $(\nabla_E,\sigma_E)$. This is the same for all $k\geq 2$ and hence deferred to the end of the next subsection.

\subsection{Higher order Leibniz rules for the $\CJ^k_E$ case}\label{sec:JEgeneral}

Here we collect the geometric data and some facts needed for the general case. First, we observe as alluded to  near the start of Section~\ref{sec:JAgeneral} that extendable connections have a simple rule for their tensor product curvature.

\begin{lemma}
\label{lem:TensorProductCurvature}
Let $\nabla_E$ be an extendable bimodule connection. Then $R_E$ is a bimodule map iff $\nabla_E$ is Leibniz compatible as shown in Fig.~\ref{fig:J2E}(b). In this case, if $\nabla_F$ is another connection, then $\nabla_{E\tens_A F}$ has curvature
\[ R_{E\tens_A F}=R_E\tens \id + (\sigma_{E,\Omega^2}\tens \id)\id\tens  R_F.\]
\end{lemma}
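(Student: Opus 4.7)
The plan is to mirror the proof of Lemma~\ref{lemR}, with a general bimodule $E$ replacing $\Omega^1$, and then to compute the tensor-product curvature directly.

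For the first claim, I would first note that $R_E=(\extd\tens\id-\id\wedge\nabla_E)\nabla_E$ is automatically a left $A$-module map: the operator $\extd\tens\id-\id\wedge\nabla_E$ is left $A$-linear (using the graded Leibniz rule for $\extd$ together with $\extd^2=0$), and $\nabla_E$ is a left connection. The nontrivial condition is therefore right $A$-linearity. I would compute $R_E(ea)$ for $e\in E$, $a\in A$ by substituting $\nabla_E(ea)=(\nabla_Ee)a+\sigma_E(e\tens\extd a)$ and expanding, exactly as in Lemma~\ref{lemR}. All contributions involving $\extd^2 a$ vanish, and the remaining surplus $R_E(ea)-R_E(e)a$ assembles into the Leibniz compatibility expression of Fig.~\ref{fig:J2E}(b) evaluated at $e\tens\extd a$.

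To promote this identity from arguments of the form $e\tens\extd a$ to all of $E\tens_A\Omega^1$, I would let $C(e,\eta)$ denote the difference of the two sides of the Leibniz compatibility condition, viewed as a map $E\tens_A\Omega^1\to\Omega^2\tens_A E$, and verify that $C$ is a well-defined bimodule map. Left $A$-linearity $C(ae,\eta)=aC(e,\eta)$ is immediate from left linearity of $\sigma_E,\nabla,\nabla_E$. The identities $C(e,a\eta)=C(ea,\eta)$ and $C(e,\eta a)=C(e,\eta)a$ use the right Leibniz rule for $\nabla_E$, the graded Leibniz rule for $\extd$, and extendability of $\sigma_E$ (Fig.~\ref{fig:J2E}(c)), just as in the proof of Lemma~\ref{lemR}. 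Since $\Omega^1$ is generated by elements $\extd a$, vanishing of $C$ on such elements implies $C\equiv 0$, and conversely $R_E$ being a bimodule map forces $C(e,\extd a)=0$ for all $e,a$.

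For the tensor-product curvature formula, the plan is to compute directly from
\[
\nabla_{E\tens_AF}=\nabla_E\tens\id_F+(\sigma_E\tens\id_F)(\id_E\tens\nabla_F),
\]
applying $(\extd\tens\id-\id\wedge\nabla_{E\tens_AF})$ and expanding. This splits into four terms: one gives $R_E\tens\id_F$; one gives $(\sigma_{E,\Omega^2}\tens\id_F)(\id_E\tens R_F)$, after using extendability of $\sigma_E$ to pull $\wedge$ past the braiding $\sigma_E$ in the form $\sigma_{E,\Omega^2}$; and the two cross terms (one from $\extd$ acting on the braided piece, one from $\id\wedge\nabla_E$ acting on the $\nabla_F$ piece) cancel, again by extendability and the bimodule property of $\sigma_E$. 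This is a special case of the fact, proven in \cite{BegMa:bia,BegMa}, that the category ${}_A\CG_A$ of bimodules with extendable bimodule connections whose curvatures are bimodule maps is monoidal, so one may alternatively quote that result.

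The main obstacle is the bookkeeping for $C$: the individual diagrammatic summands in Fig.~\ref{fig:J2E}(b) are not themselves $A$-balanced on $E\tens_A\Omega^1$, and one must check that the full combination descends to a bimodule map before the reduction to $\eta=\extd a$ is legitimate. This is where extendability of $\sigma_E$ is essential, and once it is handled the rest follows the template of Lemma~\ref{lemR} verbatim.
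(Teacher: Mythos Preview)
Your approach is exactly what the paper does: it says the proof is ``similar to that of Lemma~\ref{lemR} but with the diagrams as in Fig.~\ref{fig:J2E}'' and cites \cite[Thm~4.15]{BegMa} for the tensor-product curvature, omitting all details. Your expansion of those details is accurate.

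One point to watch: when you actually carry out the $R_E(ea)-R_E(e)a$ computation exactly as in Lemma~\ref{lemR}, the obstruction you obtain is the analogue of Fig.~\ref{fig1}(f), namely condition Fig.~\ref{fig:J2E}(e), not the Leibniz compatibility condition Fig.~\ref{fig:J2E}(b). The caption of Fig.~\ref{fig:J2E} itself says that (e) is what characterises $R_E$ being a bimodule map, and that (b) only implies (e) when $\nabla$ is torsion free. So the ``iff'' with (b) in the statement does not follow from the Lemma~\ref{lemR} template alone; you get an ``iff'' with (e), and then (b)$\Rightarrow$(e) under torsion-freeness via $\wedge_1$ applied to (b), as in Lemma~\ref{lem:nabla3leib}. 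Be prepared for this discrepancy when you write up the argument.
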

\begin{proof} The proof is similar to that of Lemma~\ref{lemR} but with the diagrams as in Fig.~\ref{fig:J2E}, so we omit the  details. One can also view this as piece of the proof in \cite[Thm~4.15]{BegMa} that the DG category ${}_A\CG_A$ of extendable connections with bimodule map curvatures has a tensor product. \end{proof}

In our case, $\nabla$ is assumed to be extendable and $E$ plays the role of $F$. Hence if $\nabla$ is extendable, flat, Leibniz compatible and $\nabla_E$ is flat then the tensor product connection $\nabla_{(1,E)}$ on $\Omega^1\tens_AE$ is flat. Such considerations were not needed for for $\CJ^2_E$, but in line to the formula $\wedge_1 \nabla_{(1,E)}\nabla_E s = R_E(s)$ in the proof there that needed $\nabla_E$ to be flat,  we will similarly need $\nabla_{(1,E)}$ flat to show that the map $j^3_E$ is indeed in the kernel of $\wedge_1$.

Similarly, iterating the lemma, we see that all the higher $\nabla_{(n,E)}\colon \Omega^{1\tens_A n} \tens_A E \rightarrow \Omega^{1\tens_A (n+1)} \tens_A E$ defined recursively through
\begin{align*}
  &\nabla_{(0,E)} = \nabla_{E},&
  &\nabla_{(n+1,E)} = \nabla \tens  \id^{n-1} \tens  \id_E + \sigma_1 (\id \tens  \nabla_{(n,E)})
\end{align*}
are flat. Here  $\nabla_{(n,E)}$ applies the appropriate connection to every factor in $\Omega^{1\tens_A n} \tens_A E$ and then uses $\sigma$ to switch the new term to the utmost left factor, i.e. one can also write
\begin{align}
    \label{eq:SplitnablanE}
    \nabla_{(n,E)} = \sum^{n-1}_{k=0} \sigma_1 \dots \sigma_k (\id^k \tens  \nabla \tens  \id^{n-1-k} \tens  \id_E) + \sigma_1 \dots \sigma_n (\id^n \tens  \nabla_E).
\end{align}
where the first term is $\nabla_n\tens\id_E$. The associated generalised braidings $\sigma_{(n,E)}\colon \Omega^{1\tens_A n} \tens_A E \tens_A \Omega^1 \rightarrow \Omega^{1\tens_A (n+1)} \tens_A E$ are given by
\begin{align*}
    &\sigma_{(0,E)} = \sigma_E,&
    &\sigma_{(n,E)} = \sigma_1 (\id \tens  \sigma_{(n-1,E)}) = \sigma_1 \dots \sigma_n \sigma_{E,n+1}
\end{align*}
where $\sigma_{E,n+1}$ means that we apply $\sigma_{E}$ to the $(n+1)$-th tensor factor. We also define
\begin{align*}
    &\sigma^n_{E}\colon  E \tens_A \Omega^{1\tens_A n}
    \rightarrow \Omega^{1\tens_A n} \tens_A E,&
    &\sigma^n_{E} = \sigma_{E,n} \dots  \sigma_{E,1}
\end{align*}
and note that for an extendable $\nabla_E$, this restricts to $E\tens_A\Omega^n_S\to \Omega^n_S\tens E$. We can view all of these together as $\sigma_{E,\Omega_S}\colon E\tens_A\Omega_S\to\Omega_S\tens_A E$ acting by $\sigma^n_E$ on each degree $n$.

\begin{lemma}
\label{lem:BraidE&Binomials}
If $\sigma$, $\sigma_E$ satisfy the coloured braid relations $\sigma_{i} \sigma_{E,i+1}\sigma_{E,i}=\sigma_{E,i+1}\sigma_{E,i}\sigma_{i+1}$ then
\begin{align*}
    \left(\left[{n \atop k},\sigma\right] \tens \id_E \right)\sigma^{n}_{E}
    = \sigma^{n}_{E}\left(\id_E \tens \left[{n \atop k},\sigma\right] \right).
\end{align*}
If $\nabla_E$  is moreover extendable then
\[ (\odot\tens\id)(\id\tens\sigma_{E,\Omega_S})(\sigma_{E,\Omega_S}\tens\id)=\sigma_{E,\Omega_S}(\id\tens\odot).\]
\end{lemma}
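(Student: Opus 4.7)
The plan is to prove the two identities in order: the first by induction on $n$ (paralleling the strategy of Lemma~\ref{lem:propertybimoduleactions}), and the second as a direct consequence of the first.

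For the first identity, the cases $n=0,1$ are trivial since both braided binomials reduce to the identity. For the inductive step the key auxiliary fact is the intertwining
\[
(\sigma_i \tens \id_E)\,\sigma^n_E \;=\; \sigma^n_E\,(\id_E \tens \sigma_i), \qquad 1 \le i \le n-1,
\]
where on each side $\sigma_i$ acts on the same two $\Omega^1$-strands, just sitting on the opposite side of the $E$-strand. I will establish this by writing $\sigma^n_E = \sigma_{E,n}\cdots\sigma_{E,1}$ and pushing $\sigma_i$ through: it commutes with every $\sigma_{E,j}$ for $j \notin \{i, i+1\}$, and the offending pair is handled by a single application of the coloured braid relation $\sigma_i\sigma_{E,i+1}\sigma_{E,i} = \sigma_{E,i+1}\sigma_{E,i}\sigma_{i+1}$, after which the resulting $\sigma_{i+1}$ commutes past the remaining $\sigma_{E,j}$ with $j \le i-1$. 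With this intertwining in hand, I substitute the recursion of Definition~\ref{def:binomials} into both sides of the target equation. The $(\id \tens \left[{n-1 \atop k},\sigma\right])$-term matches by applying the inductive hypothesis to the last $n-1$ strands using $\sigma^n_E = (\id \tens \sigma^{n-1}_E)\sigma_{E,1}$ and the fact that $\sigma_{E,1}$ commutes with binomials supported on positions $\ge 3$; the $(\id \tens \left[{n-1 \atop k-1},\sigma\right])\sigma_1\cdots\sigma_{n-k}$-term is handled the same way after first transporting $\sigma_1\cdots\sigma_{n-k}$ across $\sigma^n_E$ via the intertwining.

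For the second identity, I will first note that extendability of $\nabla_E$ ensures, by the argument used for $\nabla$ in Lemma~\ref{lem:binomialssymmetricforms}, that $\sigma^n_E$ restricts to a bimodule map $E \tens_A \Omega^n_S \to \Omega^n_S \tens_A E$, so $\sigma_{E,\Omega_S}$ is well defined into the symmetric subspace. By bidegree it suffices to verify the identity on $E \tens_A \Omega^i_S \tens_A \Omega^j_S$. There $\odot = \left[{i+j \atop j},\sigma\right]$, and the composition $(\id \tens \sigma_{E,\Omega_S})(\sigma_{E,\Omega_S} \tens \id)$ threads the $E$-strand past the first $i$ $\Omega^1$-factors and then past the next $j$, which by the defining formula $\sigma^n_E = \sigma_{E,n}\cdots\sigma_{E,1}$ equals $\sigma^{i+j}_E$. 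The target identity thus reduces to
\[
\bigl(\left[{i+j \atop j},\sigma\right] \tens \id_E\bigr)\,\sigma^{i+j}_E \;=\; \sigma^{i+j}_E\,\bigl(\id_E \tens \left[{i+j \atop j},\sigma\right]\bigr),
\]
which is exactly the first part of the lemma with $n = i+j$ and $k = j$. The main obstacle throughout is the index-bookkeeping in the inductive step of the first part; beyond careful tracking of strands, no conceptual input beyond the coloured braid relations is required, making this a direct coloured analogue of Lemma~\ref{lem:propertybimoduleactions}.
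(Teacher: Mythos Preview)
Your proposal is correct and follows essentially the same approach as the paper. The paper's proof records the same intertwining $\sigma_l\sigma^n_E=\sigma^n_E\sigma_{l+1}$ (your $(\sigma_i\tens\id_E)\sigma^n_E=\sigma^n_E(\id_E\tens\sigma_i)$ in relative indexing) as the key consequence of the coloured braid relations, then runs the same induction on $n$ using the recursion for the braided binomial together with the factorisation $\sigma^n_E=(\id\tens\sigma^{n-1}_E)\sigma_{E,1}$; your derivation of the second identity from the first via $(\id\tens\sigma_{E,\Omega_S})(\sigma_{E,\Omega_S}\tens\id)=\sigma^{i+j}_E$ on bidegree $(i,j)$ is exactly the intended reading, which the paper leaves implicit.
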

\begin{proof}
First note that the coloured braid relations imply $\sigma_l \sigma^{n}_E= \sigma^{n}_E\sigma_{l+1}$ for $l=1,\dots, n-1$. It is clear that the equation we want to show holds for $n=1$. Assuming that it holds for $n-1$, we then compute
\begin{align*}
    &\left(\left[{n \atop k},\sigma\right] \tens \id_E \right)\sigma^{n}_E
    =
    \left(\id \tens \left[{n-1 \atop k-1},\sigma\right] \tens \id_E \right)\sigma_1\dots \sigma_{n-k}\sigma^{n}_E
    +
    \left(\id \tens \left[{n-1 \atop k},\sigma\right] \tens \id_E \right)\sigma^{n}_E\\
    &=\left(\id \tens \left[{n-1 \atop k-1},\sigma\right] \tens \id_E \right)\sigma^{n}_E\sigma_2\dots \sigma_{n-k+1}
    +
    \id \tens \left\{\left(\left[{n-1 \atop k},\sigma\right] \tens \id_E \right)\sigma^{n-1}_E\right\}\sigma_{E,1}\\
    &=
    \id \tens \left\{\sigma^{n-1}_E\left(\id_E \tens \left[{n-1 \atop k-1},\sigma\right]\right)\right\}\sigma_{E,1}\sigma_2\dots \sigma_{n-k+1}
    +
    \id \tens \left\{\sigma^{n-1}_E\left(\id_E \tens \left[{n-1 \atop k-1},\sigma\right]\right)\right\}\sigma_{E,1}\\
    &=\sigma^{n}_E \id_E \tens \left\{\left(\id \tens \left[{n-1 \atop k-1},\sigma\right]\right)\sigma_1\dots \sigma_{n-k}
    +
    \left(\id \tens \left[{n-1 \atop k-1},\sigma\right]\right)\right\}
    = \sigma^{n}_E\left(\id_E \tens \left[{n \atop k},\sigma\right] \right),
\end{align*}
where we assumed the lower case as the induction hypothesis for the 3rd equality.
\end{proof}

The second part implies that we can extend the product $\odot$ on $\Omega_S$ to a bimodule structure on $\Omega_S\tens_AE$ which will then be used as a base on which to build the jet bimodule for $E$.

\begin{corollary}
\label{cor:OmegaSEbimodule}
Let $\nabla$ be a torsion free, $\wedge$-compatible, extendable connection on $\Omega^1$. Let $\sigma$ satisfy the braid relations and $\sigma_E$ the coloured braid relations, and let $\nabla_E$ be extendable. Then $\Omega_S \tens E$ is an $\Omega_S$-bimodule with the left and right actions $\odot_E$ given by
    \begin{align*}
        &\eta \odot_E (\omega \tens s) =  \eta \odot \omega \tens s,&
        &(\omega \tens s) \odot_E \eta = \omega\odot\sigma_{E,\Omega_S}(s\tens\eta)    \end{align*}
    for all $s\in E$, $\omega,\eta \in \Omega_S$.
\end{corollary}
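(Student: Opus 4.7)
The plan is to verify well-definedness followed by the three bimodule axioms, relying on associativity of $\odot$ from Corollary~\ref{cor:AlgebraQuantumSymmetricForms} and on the second identity of Lemma~\ref{lem:BraidE&Binomials} for the only nontrivial step.

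First I would check that both actions land in $\Omega_S \tens_A E$. For the left action this is immediate, since $\eta \odot \omega \in \Omega_S$ by Corollary~\ref{cor:AlgebraQuantumSymmetricForms}. For the right action, extendability of $\nabla_E$ (together with the $\wedge$-compatibility of $\nabla$) ensures that $\sigma^n_E$ restricts to $E \tens_A \Omega^n_S \to \Omega^n_S \tens_A E$ as observed just before the Lemma, so $\sigma_{E,\Omega_S}(s \tens \eta) \in \Omega_S \tens_A E$, and applying $\omega \odot (-)$ in the first factor then stays in $\Omega_S \tens_A E$.

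The left $\Omega_S$-module axiom $\eta_1 \odot_E (\eta_2 \odot_E (\omega \tens s)) = (\eta_1 \odot \eta_2) \odot_E (\omega \tens s)$ is immediate from associativity of $\odot$ on $\Omega_S$. The main computation is the right module axiom. Writing $\sigma_{E,\Omega_S}(s \tens \eta_1) = \alpha \tens t$ in a Sweedler-like notation, one has
\begin{align*}
((\omega \tens s) \odot_E \eta_1) \odot_E \eta_2
&= (\omega \odot \alpha \tens t) \odot_E \eta_2
= \omega \odot \alpha \odot \sigma_{E,\Omega_S}(t \tens \eta_2)\\
&= \omega \odot (\odot \tens \id)(\id \tens \sigma_{E,\Omega_S})(\sigma_{E,\Omega_S}\tens \id)(s \tens \eta_1 \tens \eta_2)\\
&= \omega \odot \sigma_{E,\Omega_S}(s \tens (\eta_1 \odot \eta_2))
= (\omega \tens s) \odot_E (\eta_1 \odot \eta_2),
\end{align*}
where the penultimate equality is precisely the second identity of Lemma~\ref{lem:BraidE&Binomials}, and the inner use of associativity of $\odot$ is what allows us to regroup $\alpha \odot (\,\cdot\,)$ into $(\odot \tens \id)(\alpha \tens \sigma_{E,\Omega_S}(t \tens \eta_2))$. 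This is the one place where the hypothesis of Lemma~\ref{lem:BraidE&Binomials} (i.e. that $\sigma$ satisfies the braid relations, that $\sigma_E$ satisfies the coloured braid relations, and that $\nabla_E$ is extendable) is genuinely used, and it is the step I expect to require the most care to phrase cleanly.

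Finally, the compatibility of the two actions,
\[ \eta_1 \odot_E ((\omega \tens s) \odot_E \eta_2) = \eta_1 \odot \omega \odot \sigma_{E,\Omega_S}(s \tens \eta_2) = (\eta_1 \odot_E (\omega \tens s)) \odot_E \eta_2,\]
follows at once from the associativity of $\odot$, since $\eta_1$ is pre-composed into the first tensor slot and never interacts with $\sigma_{E,\Omega_S}$. The unit axiom is trivial as $\odot$ is unital with unit $1 \in A = \Omega^0_S$. Thus the whole argument reduces to a single invocation of Lemma~\ref{lem:BraidE&Binomials} together with associativity, and no further geometric conditions beyond those already listed in the hypotheses are needed.
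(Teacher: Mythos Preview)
Your proof is correct and follows essentially the same approach as the paper: the left action and the bimodule compatibility come directly from associativity of $\odot$, while the right action is handled by a single invocation of the second identity in Lemma~\ref{lem:BraidE&Binomials}. Your version is simply more explicit than the paper's terse proof, particularly in spelling out well-definedness and the Sweedler-style computation for the right action.
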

\begin{proof} That the left $\odot_E$ is an action and commutes with the right $\odot_E$ is immediate from the form of the actions and $\odot$ associative. That the right $\odot_E$ is an action follows immediately from Lemma~\ref{lem:BraidE&Binomials} in the more abstract form. One can also dissect the content of this by degree. \end{proof}

As in the $E=A$ case, we are interested in analysing $n$-th order derivatives for any bimodule $E$, which will have similar properties to the ones we encountered before. They are defined as
\begin{align*}
    &\nabla^n_E\colon E \rightarrow \Omega^{1\tens_A n} \tens_A E,&
    &\nabla^n_E s = \nabla_{(n-1,E)}  \nabla^{n-1}_E s =
    \nabla^n_E s = \nabla_{(n-1,E)}\dots\nabla_{(0,E)} s,
\end{align*}
with $\nabla^0_E = \id_E$.

\begin{lemma}
\label{lem:higherderivetivessymmetricE}
Let $\nabla$ be a torsion free, flat, $\wedge$-compatible bimodule connection on $\Omega^1$ and $\nabla_E$ a flat bimodule connection on $E$. Then the image of $\nabla^n_E$ is a subset of the space of quantum symmetric $n$-forms with values in $E$,
\begin{align*}
  \im \nabla^n_E \subseteq \Omega^n_S \tens_A E.
\end{align*}
\end{lemma}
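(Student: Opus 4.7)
The proof proposal follows the template of Lemma~\ref{lem:higherderivetivessymmetric}, suitably enriched to carry an auxiliary factor of $E$ along. I would proceed by induction on $n$, with the cases $n=0,1$ immediate (since $\nabla^0_E = \id_E$ and $\nabla^1_E = \nabla_E$ land in $E = \Omega^0_S \tens_A E$ and $\Omega^1 \tens_A E = \Omega^1_S \tens_A E$ respectively) and the case $n=2$ already settled inside the proof of Proposition~\ref{prop:J2}(3) where the flatness $R_E = 0$ was used to give $\wedge_1 \nabla_{(1,E)}\nabla_E s = R_E(s) = 0$.

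For the inductive step I assume $\im \nabla^{n-1}_E \subseteq \Omega^{n-1}_S \tens_A E$ and write $\nabla^n_E s = \nabla_{(n-1,E)}\nabla^{n-1}_E s$, with the inner factor in $\Omega^{n-1}_S \tens_A E$. I must check $\wedge_i \nabla^n_E s = 0$ for every $i = 1,\dots,n-1$. For the wedges in positions $i \geq 2$ I would first establish the natural auxiliary sub-lemma (direct analog of the first half of Lemma~\ref{lem:higherderivetivessymmetric}): under $\nabla$ torsion-free and $\wedge$-compatible, $\nabla_{(m,E)}$ restricts to a bimodule connection $\Omega^m_S \tens_A E \to \Omega^1 \tens_A \Omega^m_S \tens_A E$. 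This is proved by induction on $m$ using the decomposition \eqref{eq:SplitnablanE}: for the first summand it reduces to the corresponding statement for $\nabla_m$ on $\Omega^m_S$ already proved, and the second summand $\sigma_1\cdots\sigma_m(\id^m \tens \nabla_E)$ lies in $\Omega^{1\tens_A(m+1)}\tens_A E$ with $\wedge_i$ for $i\geq 2$ either falling within the symmetric $\Omega^m_S$ factor or recovered from $\wedge$-compatibility as in Fig.~\ref{fig1}(c). Combined with the outer inductive hypothesis, this gives $\wedge_i \nabla^n_E s = 0$ for $i \geq 2$.

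For $i=1$ I mimic the $E=A$ computation of Lemma~\ref{lem:higherderivetivessymmetric}, using $\wedge\nabla = \extd$ and $\wedge(\id+\sigma)=0$ from torsion-freeness:
\begin{align*}
  \wedge_1 \nabla^n_E s
  &= \wedge_1\bigl(\nabla \tens \id^{n-2}\tens \id_E + \sigma_1(\id\tens\nabla_{(n-2,E)})\bigr)\nabla^{n-1}_E s\\
  &= (\extd \tens \id - \id\wedge\nabla_{(n-2,E)})\,\nabla_{(n-2,E)}\nabla^{n-2}_E s
  = R_{(n-2,E)}(\nabla^{n-2}_E s).
\end{align*}
To finish, I would show $R_{(m,E)}(\nabla^m_E s) = 0$ by an inner induction on $m$, using the tensor-product curvature identity
$R_{(m,E)} = R_{\nabla_m}\tens \id_E + (\sigma_{\Omega^1,\Omega^2}\tens\id)(\id\tens R_{(m-1,E)})$
analogous to \eqref{eq:HigherCurvatures} and Lemma~\ref{lem:TensorProductCurvature}, iterated down to the base facts $R_{(0,E)} = R_E = 0$ and $R_{\nabla_m} = 0$ (the latter from flatness of $\nabla$).

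The main obstacle is the last step: the curvature identity in the displayed form normally needs extendability of $\nabla_E$, which is not among the stated hypotheses. I would finesse this by verifying the identity only on the image of $\nabla^{m-1}_E$ (which by the outer induction already lies in $\Omega^{m-1}_S\tens_A E$), so that the problematic bimodule-map boundary terms are absent and what survives is exactly the combination $R_\nabla$ on one factor plus $R_E$ on the other, both zero by hypothesis. This careful localisation to the relevant subspace is the crux of the argument; everything else is routine diagrammatic bookkeeping with torsion-freeness and $\wedge$-compatibility.
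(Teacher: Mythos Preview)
Your proposal is correct and follows essentially the same approach as the paper: induction on $n$, with $\wedge_1$ handled via the higher curvature $R_{(n-2,E)}(\nabla^{n-2}_E s)$ and $\wedge_i$ for $i\geq 2$ handled by showing $\nabla_{(n-1,E)}$ restricts to $\Omega^{n-1}_S\tens_A E$ using the decomposition \eqref{eq:SplitnablanE} together with Lemma~\ref{lem:higherderivetivessymmetric} and $\wedge$-compatibility. On the extendability point you flag, the paper simply asserts $R_{(n-2,E)}(\nabla^{n-2}_E s)=0$ without further argument, implicitly relying on the discussion preceding the lemma (where it is extendability of $\nabla$, rather than of $\nabla_E$, that enters via the tensor-product curvature formula); strictly speaking this goes beyond the lemma's stated hypotheses, so your caution here is well placed even if the paper does not share it.
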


\begin{proof}
The cases $n=0,1$ are trivial and the case $n=2$ has already been shown in Proposition \ref{prop:J2}. For $n>2$ the proof works in the same manner as for Lemma \ref{lem:higherderivetivessymmetric}. When applying $\wedge_1$ on $\nabla^n_E s$, we again find that it vanishes as we assumed flatness,
\begin{align*}
    \wedge_1 \nabla^n_E s
    &= \wedge_1\nabla_{(n-1,E)}\nabla_{(n-2,E)}\nabla^{n-2}_{E} s
    =(\extd \tens \id_E - \wedge_1 (\id \tens  \nabla_{(n-2,E)}))\nabla_{(n-2,E)} s\nabla^{n-2}_{E} s\\
    &= R_{(n-2,E)}(\nabla^{n-2}_Es) = 0.
\end{align*}

Next, we note that $\nabla_{(n-1,E)}$ restricts to a connection $\Omega^{n-1}_S\tens_AE$ as is clear from  its the tensor product form  (\ref{eq:SplitnablanE}). The first term here is $\nabla_{n-1}$ which by  Lemma~\ref{lem:higherderivetivessymmetric} restricts to $\Omega^{n-1}S$ and the second term also restricts by $\wedge$-compatibility of $\nabla$. Then for $i=2,\dots,n-1$, we have $\wedge_i\nabla_E^ns=\nabla_{(n-1,E)}\nabla_E^{n-1}s=0$ where we take as induction hypothesis that $\nabla^{n-1}_E$ lands in $\Omega^{n-1}_S\tens_A E$. \end{proof}

We now turn to the generalisation of the $n$-th order Leibniz rules. In contrast to the $E=A$ case, we have both left and right ones. During the proof, we will also see how $\nabla_{(n,E)}$ can be exchanged with the braidings $\sigma, \sigma_E$ and the braided binomials $\left[{n \atop k}, \sigma \right]$, in an analogous fashion to Lemma \ref{lem:Nabla&Binomials}.

\begin{lemma}
Let $\nabla$, $\nabla_E$ be Leibniz compatible as in Fig.~\ref{fig1}(e) and Fig.~\ref{fig:J2E} (b). Then
\begin{align*}
    &\nabla^n_E(as) = \sum^{n}_{k=0} \left(\left[{n \atop k},\sigma \right]\tens  \id_E \right) \nabla^{n-k} a \tens  \nabla^k_E s,\\
    &\nabla^n_E(sa) = \sum^{n}_{k=0} \left(\left[{n \atop k},\sigma \right]\tens  \id_E\right)(\id^{n-k}\tens\sigma_E^k) (\nabla^{n-k}_E s \tens  \nabla^k a).
\end{align*}
for all $a\in A, s\in E$.
\end{lemma}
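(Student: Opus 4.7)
The plan is induction on $n$, closely paralleling the proof of Lemma~\ref{lem:leibrule} for the $E=A$ case. The base cases $n=0,1$ are trivial: for $n=1$ the two formulas reduce to the left Leibniz rule $\nabla_E(as)=\extd a\tens s+a\nabla_E s$ and the right Leibniz rule $\nabla_E(sa)=(\nabla_E s)a+\sigma_E(s\tens\extd a)$ for the bimodule connection $\nabla_E$. For the inductive step, assume both formulas at level $n$, and apply $\nabla_{(n,E)}$ to each side of the $n$-th order equation.

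For the left rule, the strategy is essentially verbatim the proof of Lemma~\ref{lem:leibrule}, but with the last tensor factor of type $E$ and with $\nabla_{(n,E)}$ in place of $\nabla_n$. Specifically, I would first establish the analog
\[
\nabla_{(n,E)}(\sigma_i\tens\id_E)=(\sigma_{i+1}\tens\id_E)\nabla_{(n,E)},\qquad
\nabla_{(n,E)}\bigl(\left[{n\atop k},\sigma\right]\tens\id_E\bigr)=\bigl(\id\tens\left[{n\atop k},\sigma\right]\tens\id_E\bigr)\nabla_{(n,E)},
\]
of Lemma~\ref{lem:Nabla&Binomials}, whose proof is an immediate modification: split $\nabla_{(n,E)}$ using~\eqref{eq:SplitnablanE}, use Leibniz compatibility of $\nabla$ in Fig.~\ref{fig1}(e) on the middle $\nabla_2$ factor, and use the coloured braid relations (which hold since $\nabla_E$ is Leibniz compatible, cf.\ Fig.~\ref{fig:J2E}(d)) to commute $\sigma_i$ past $\sigma_1\cdots\sigma_n\sigma_{E,n+1}$. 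Once this is in hand, pulling $\nabla_{(n,E)}$ past $[{n\atop k},\sigma]\tens\id_E$, applying~\eqref{eq:SplitnablanE} in the form $\nabla_{(n,E)}=\nabla_{n-k}\tens\id^k\tens\id_E+\sigma_1\cdots\sigma_{n-k}(\id^{n-k}\tens\nabla_{(k,E)})$ to the factor $\nabla^{n-k}a\tens\nabla_E^k s$, and recombining exactly as in the proof of Lemma~\ref{lem:leibrule} via the recursive Definition~\ref{def:binomials}, delivers the formula at level $n+1$.

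The right rule is the main work. Applying $\nabla_{(n,E)}$ to the $n$-th level right formula and using the same commutation with $[{n\atop k},\sigma]\tens\id_E$, the question reduces to computing
\[
\nabla_{(n,E)}(\id^{n-k}\tens\sigma_E^k)(\nabla_E^{n-k}s\tens\nabla^k a).
\]
Here I would first show, as a companion to Lemma~\ref{lem:BraidE&Binomials}, the identity
\[
\nabla_{(n,E)}(\id^{n-k}\tens\sigma_E^k)=(\id^{n-k+1}\tens\sigma_E^k)\nabla_{(n,E)}|_{\text{on }\Omega^{1\tens n-k}\tens\Omega^{1\tens k}\tens E\text{ form}}
\]
by repeatedly exchanging $\nabla_{(\cdot,E)}$ with a single $\sigma_E$ via the coloured-braid/Leibniz-compatibility identity in Fig.~\ref{fig:J2E}(b), in the same way as Lemma~\ref{lem:BraidE&Binomials} iterates one braid into $\sigma_E^n$. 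Having pushed the $\sigma_E^k$ block past the connection, one is left with the action of a tensor-product connection on $\nabla_E^{n-k}s\tens\nabla^k a$, which splits by~\eqref{eq:SplitnablanE} (with $l=n-k$) into $\nabla_E^{n-k+1}s\tens\nabla^k a$ plus $\sigma_1\cdots\sigma_{n-k}(\nabla_E^{n-k}s\tens\nabla^{k+1}a)$. Collecting the resulting sum using Definition~\ref{def:binomials} gives exactly the stated formula at level $n+1$, with the $\sigma_E^{k+1}$ factors appearing because each new derivative in the $a$ slot must be carried past one further copy of $E$ by $\sigma_E$.

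The main obstacle is bookkeeping: verifying that $\nabla_{(n,E)}$ genuinely commutes with both $\left[{n\atop k},\sigma\right]\tens\id_E$ and the composite braids $\id^{n-k}\tens\sigma_E^k$, and that the emergent $\sigma_E^{k+1}$ factor is sitting in the correct position to match the ansatz. This is essentially diagrammatic and is forced once Leibniz compatibility (Fig.~\ref{fig1}(e), Fig.~\ref{fig:J2E}(b)) and the (coloured) braid relations are in hand; no new geometric hypothesis beyond those already assumed in the earlier lemmas is needed.
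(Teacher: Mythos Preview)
Your proposal is correct and follows essentially the same strategy as the paper: establish the $E$-version of Lemma~\ref{lem:Nabla&Binomials} to commute $\nabla_{(n,E)}$ past $\left[{n\atop k},\sigma\right]\tens\id_E$, prove a single-step swap identity between the tensor-product connection and one $\sigma_E$ using Leibniz compatibility Fig.~\ref{fig:J2E}(b), iterate it to handle $\sigma_E^k$, and then recombine via Definition~\ref{def:binomials} exactly as in Lemma~\ref{lem:leibrule}. The only point where the paper is more careful is that your displayed identity ``$\nabla_{(n,E)}(\id^{n-k}\tens\sigma_E^k)=(\id^{n-k+1}\tens\sigma_E^k)\nabla_{(n,E)}|_{\ldots}$'' is not literally well-typed: the connection appearing on the right-hand side is not $\nabla_{(n,E)}$ restricted anywhere, but the genuinely different tensor-product connection on the mixed space $\Omega^{1\tens_A(n-k)}\tens_A E\tens_A\Omega^{1\tens_A k}$, which the paper names $\nabla_{(n-k,E,k)}$ and whose braiding carries the extra $\sigma_{E,n-k+1}$ you allude to informally; once this notation is in place your ``splitting by~\eqref{eq:SplitnablanE}'' and the appearance of $\sigma_E^{k+1}$ become rigorous rather than heuristic.
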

\begin{proof} Taking the tensor product from of $\nabla_{(n,E)}$  in (\ref{eq:SplitnablanE}), we first observe that
\begin{align}\label{nablanEswap}
    \nabla_{(n,E)} \sigma_i &= \sigma_{i+1} \nabla_{(n,E)},\quad \nabla_{(n,E)} \left(\left[{n \atop k}, \sigma \right] \tens  \id_E\right) = \left(\id\tens  \left[{n \atop k}, \sigma \right] \tens  \id_E\right) \nabla_{(n,E)}.
\end{align}
for $i=1,\dots,n-1$, $k=0,\dots,n$. Here the left hand side has two terms of which one is $(\nabla_n\tens\id_E)\sigma_i$ and we apply Lemma~\ref{lem:Nabla&Binomials}, and the other is $\sigma_1 \dots \sigma_{n} \sigma_i (\id^{n}\tens  \nabla_E)$ and we apply the braid relations for $\sigma$ implied by Leibniz compatibility. This then implies the second half of (\ref{nablanEswap}) as before.

Next, we introduce the notation
\begin{align*}
    &\nabla_{(i,E,j)} \colon \Omega^{1\tens_A i} \tens_A E \tens_A \Omega^{1\tens_A j} \rightarrow \Omega^{1\tens_A (i+1)} \tens_A E \tens_A \Omega^{1\tens_A j},\\
    &\nabla_{(i,E,j)}
    = \nabla_{(i,E)} \tens  \id^j + \sigma_1 \dots \sigma_i\sigma_{E,i+1} (\id^i \tens  \id_E \tens  \nabla_j)
\end{align*}
for the tensor product of $\nabla_{(i,E)}$ as a bimodule connection and $\nabla_j$ from Section~\ref{sec:JAgeneral}. We  now prove the further relation
\begin{align}\label{nablajEnswap}
    \nabla_{(j,E,n-j)} \sigma_{E,j}
    = \sigma_{E,j+1}
    \nabla_{(j-1,E,n-j+1)}
\end{align}
for $j=1,\dots,n$. Here $n=j=1$ means $ \nabla_{(1,E,0)} \sigma_{E,1} = \sigma_{E,2} \nabla_{(0,E,1)}$, which is our assumed Leibniz compatibility condition for $\nabla_E$  in Fig.~\ref{fig:J2E}(b). Now proceeding inductively,
\begin{align*}
    \nabla_{(j,E,n-j)} \sigma_{E,j}
    &= \left\{\nabla_{j-1} \tens  \id \tens  \id_E \tens  \id^{n-j}
    + \sigma_1 \dots \sigma_{j-1}\left(\id^{j-1} \tens  \nabla_{(1,E,0)} \tens  \id^{n-j} \right)\right.\\
    &\quad\left.+\sigma_1 \dots \sigma_j \sigma_{E,j+1} \left(\id^{j-1} \tens  \id \tens  \id_E \tens  \nabla_{n-j} \right)
    \right\} \sigma_{E,j}\\
    &= \sigma_{E,j+1}\left(\nabla_{j-1} \tens  \id_E \tens  \id \tens  \id^{n-j}\right)
    + \sigma_1 \dots \sigma_{j-1}\left(\id^{j-1} \tens  \nabla_{(1,E,0)}\sigma_{E,1} \tens  \id^{n-j} \right)\\
    &\quad +\sigma_1 \dots \sigma_j \sigma_{E,j+1}\sigma_{E,j} \left(\id^{j-1} \tens  \id_E \tens  \id \tens  \nabla_{n-j} \right).
\end{align*}
The first term is already in the desired form. For the second we apply Leibniz compatibility for $\nabla_E$ to get
\begin{align*}
    \sigma_1 \dots \sigma_{j-1}&\left(\id^{j-1} \tens  \sigma_{E,2} \nabla_{(1,E,0)}\tens  \id^{n-j} \right)
    =\sigma_1 \dots \sigma_{j-1}\sigma_{E,j+1}\left(\id^{j-1} \tens  \nabla_{(1,E,0)} \tens  \id^{n-j} \right)\\
    &= \sigma_{E,j+1}\sigma_1 \dots \sigma_{j-1}\left(\id^{j-1} \tens  \nabla_{(1,E,0)} \tens  \id^{n-j} \right).
\end{align*}
For the last term we can use the implied coloured braid relations to pass $\sigma_{E,j+1}$ to the utmost left position $\sigma_1 \dots \sigma_j\sigma_{E,j+1}\sigma_{E,j}= \sigma_{E,j+1}\sigma_1 \dots \sigma_{j-1} \sigma_{E,j}\sigma_{i+1}.$ Summing the three expressions, we have
\begin{align*}
    \nabla_{(j,E,n-j)} \sigma_{E,j}
    &= \sigma_{E,j+1}\left\{\nabla_{j-1} \tens  \id_E \tens  \id \tens  \id^{n-j}
    + \sigma_1 \dots \sigma_{j-1}\left(\id^{j-1} \tens  \nabla_{(0,E,1)} \tens  \id^{n-j} \right)\right.\\
    &\quad \left.+\sigma_1 \dots \sigma_{j-1} \sigma_{E,j} \sigma_{i+1}
    \left(\id^{j-1} \tens  \id_E \tens  \id \tens  \nabla_{n-j} \right)
    \right\}
   \end{align*}
which we recognise as $\sigma_{E,j+1}\nabla_{(j-1,E,n-j-1)}$ when this is similarly decomposed.

Then computation for the stated left and right $n$-th Leibniz rules now proceeds analogously to the proof of Lemma \ref{lem:leibrule}, now using equation (\ref{nablanEswap}) and keeping in mind that equation (\ref{nablajEnswap})  implies
\begin{align*}
    \nabla_{(n,E,0)} (\id^{l-1} \tens \sigma^{n-l+1}_{E})
    = (\id^{l} \tens \sigma^{n-l+1}_{E}) \nabla_{(l-1,E,n-l+1)}
\end{align*}
for the computation of the right stated identity.
\end{proof}

In our geometric setting where in addition $\nabla$ is torsion free, flat, $\wedge$-compatible and extendable and $\nabla_E$ is flat and extendable, we can write this lemma in terms of the bimodule product $\odot_E$ as
\begin{align}\label{nablaEcircEderiv}
    &\nabla^n_E(as) = \sum^{n}_{k=0} \nabla^{n-k} a \odot_E  \nabla^k_E s&
    &\nabla^n_E(sa) = \sum^{n}_{k=0} \nabla^{n-k}_E s \odot_E  \nabla^k a.
\end{align}

\subsection{Higher Jet bundles for a generic bimodule $E$}\label{secJEhigher}

With the above considerations on higher derivatives on $\Omega^{1\tens_A k} \tens_A E$, we are now ready to construct  higher jets for a generic bimodule $E$.

\begin{theorem}
\label{thm:JkE}
Let $\nabla$ be a torsion free, flat, $\wedge$-compatible, extendable and Leibniz compatible bimodule connection on $\Omega^1$ as in Fig. \ref{fig1}(a)-(e) and $\nabla_E$ a flat, extendable and Leibniz compatible bimodule connection on $E$ as in Fig.~\ref{fig:J2E} (a)-(c). Then
\begin{align*}
    \CJ^k_E
    = E \oplus \Omega^1_S \tens_A  E \oplus \cdots \oplus \Omega^k_S \tens_A E,\quad   j^k_E: E \rightarrow \CJ^k_E,\quad
    j^k_E(s) = s + \nabla^1_E s + \dots + \nabla^k_E s,
\end{align*}
 form an $A$-bimodule and bimodule maps for the bullet action $\bullet_k$ given by
\begin{align*}
    &a \bullet_k (\omega_j \tens s)  =j^{k-j} a \odot_E  (\omega_j \tens  s) = \sum^k_{i=j} \nabla^{i-j} a \odot_E (\omega_j \tens  s),\nonumber\\
    &(\omega_j \tens s)  \bullet_k a = (\omega_j \tens s) \odot_E j^{k-j} a
    = \sum^k_{i=j} (\omega_j \tens  s)  \odot_E  \nabla^{i-j} a
\end{align*}
for all $\omega_j \in \Omega^j_S$, $s\in E$ and $j=0,\dots,k$. Quotienting out $\Omega^k_S\tens_AE$ gives a bimodule map surjection   $\pi_k\colon \CJ^{k}_E \rightarrow \CJ^{k-1}_E$ such that $\pi_k\circ j^k_E=j^{k-1}_E$. \end{theorem}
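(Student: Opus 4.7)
The plan is to mirror the proof of Theorem \ref{thm:JkA} step by step, using the bimodule structure $\odot_E$ on $\Omega_S \tens_A E$ from Corollary \ref{cor:OmegaSEbimodule} in place of the algebra product $\odot$ on $\Omega_S$, and the two-sided higher-order Leibniz rules \eqref{nablaEcircEderiv} in place of \eqref{nablancirc}. All the geometric data needed has already been assembled in Section \ref{sec:JEgeneral}, so the proof reduces to a systematic recombination of sums of derivatives.

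First I would verify that $j^k_E$ and the $\bullet_k$ actions land inside $\CJ^k_E = \bigoplus_{i=0}^k \Omega^i_S \tens_A E$. For $j^k_E$ this is immediate from Lemma \ref{lem:higherderivetivessymmetricE}, which gives $\nabla^i_E s \in \Omega^i_S \tens_A E$. For the $\bullet_k$ actions, given $\omega_j \tens s \in \Omega^j_S \tens_A E$, the expression $\nabla^{i-j} a \odot_E (\omega_j \tens s)$ lands in $\Omega^i_S \tens_A E$ by Corollary \ref{cor:OmegaSEbimodule} together with Lemma \ref{lem:binomialssymmetricforms}, and the sum is truncated at $i = k$ as required.

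Next I would check that $j^k_E$ is an $A$-bimodule map. Using \eqref{nablaEcircEderiv},
\begin{align*}
j^k_E(as) &= \sum_{i=0}^k \nabla^i_E(as) = \sum_{i=0}^k \sum_{j=0}^i \nabla^{i-j} a \odot_E \nabla^j_E s
 = \sum_{j=0}^k j^{k-j}(a) \odot_E \nabla^j_E s = a \bullet_k j^k_E(s),
\end{align*}
by swapping the order of summation exactly as in Theorem \ref{thm:JkA}; the right-sided calculation works identically using the second identity in \eqref{nablaEcircEderiv}.

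Then I would verify the bimodule axioms $a \bullet_k (b \bullet_k x) = (ab) \bullet_k x$, $(x \bullet_k b) \bullet_k a = x \bullet_k (ba)$, and $(a \bullet_k x) \bullet_k b = a \bullet_k (x \bullet_k b)$ for $x = \omega_j \tens s \in \Omega^j_S \tens_A E$. Each reduces to associativity of $\odot_E$ (again Corollary \ref{cor:OmegaSEbimodule}) combined with the fact, already proved in Theorem \ref{thm:JkA}, that $j^{k-j}(ab) = j^{k-j}(a) \odot j^{k-j}(b)$ at the appropriate truncation. Concretely
\[
a \bullet_k (b \bullet_k (\omega_j \tens s)) = j^{k-j}(a) \odot_E (j^{k-j}(b) \odot_E (\omega_j \tens s)) = (j^{k-j}(a) \odot j^{k-j}(b)) \odot_E (\omega_j \tens s),
\]
and this equals $j^{k-j}(ab) \odot_E (\omega_j \tens s) = (ab)\bullet_k (\omega_j \tens s)$ because truncating at degree $k-j$ the degrees above $k-j$ land in $\Omega^{>k}_S \tens_A E$ which is zero in $\CJ^k_E$. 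The right action calculation is strictly analogous, and commutativity of left and right actions follows by the same index-reshuffling argument as in Theorem \ref{thm:JkA}, part (3).

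The main potential obstacle is the left-right commutativity, since the right $\odot_E$ action involves the braiding $\sigma_{E,\Omega_S}$; however the abstract middle-associativity statement in Corollary \ref{cor:OmegaSEbimodule} makes this a formal consequence, just as in the $E = A$ case. Finally, the projection $\pi_k$ is straightforward: as in \eqref{bulletiterative}, the action $\bullet_k$ on $\omega_j \tens s$ with $j<k$ decomposes as $a \bullet_k (\omega_j \tens s) = a \bullet_{k-1}(\omega_j \tens s) + \nabla^{k-j}a \odot_E (\omega_j \tens s)$, where the second term lies in $\Omega^k_S \tens_A E = \ker \pi_k$; hence $\pi_k$ intertwines $\bullet_k$ and $\bullet_{k-1}$. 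The compatibility $\pi_k \circ j^k_E = j^{k-1}_E$ follows by inspection since $\pi_k$ simply discards the $\nabla^k_E s$ term.
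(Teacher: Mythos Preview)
Your proposal is correct and follows essentially the same strategy as the paper's own proof: both mirror the argument of Theorem~\ref{thm:JkA} with $\odot$, $\nabla^n$ replaced by $\odot_E$, $\nabla^n_E$, invoking Lemma~\ref{lem:higherderivetivessymmetricE}, Corollary~\ref{cor:OmegaSEbimodule} and the Leibniz rules~\eqref{nablaEcircEderiv} in exactly the places you indicate. The only point where you are slightly looser than the paper is the displayed equality $a\bullet_k(b\bullet_k(\omega_j\tens s)) = j^{k-j}(a)\odot_E(j^{k-j}(b)\odot_E(\omega_j\tens s))$, which is literally true only modulo degrees $>k$ (since $b\bullet_k(\omega_j\tens s)$ is not homogeneous of degree $j$); the paper instead decomposes by degree and uses $j^{k-i}(a)$ on the degree-$i$ piece, but your truncation remark correctly handles this and the two presentations amount to the same computation.
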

\begin{proof}
This proof is analogous to the proof of Theorem \ref{thm:JkA} with $\nabla^n,\odot$ replaced by $\nabla^n_E,\odot_E$ where necessary. That the action $\bullet_k$ and jet prolongation map $j^k_E$ land in $\CJ^k_E$ is from  Lemmas~\ref{cor:OmegaSEbimodule} and \ref{lem:higherderivetivessymmetricE}.  Thus, to show that $j^k_E$ is a bimodule map, we compute for $a\in A, s \in E$,
\begin{align*}
    j^k_E(as) &= \sum^k_{i=0} \nabla^{i}_E(as)
    = \sum^k_{i=0} \sum^i_{j=0} \nabla^{i-j}a \odot_E \nabla^j_E s
    = \sum^k_{j=0} \sum^k_{i=j} \nabla^{i-j}a \odot_E \nabla^j_E s
    = \sum^k_{j=0} j^{k-j}(a) \odot_E \nabla^js\\
    &= \sum^k_{j=0} a \bullet_{k} \nabla^j_Es
    =a \bullet_{k} j^k_E(s).
\end{align*}
Similarly, for  $j^k_E(sa) = j^k_E(s) \bullet_k a$. Likewise, that the right action $\bullet_k$ is compatible with the tensor product is
\begin{align*}
    &((\omega_j \tens s) \bullet_k b) \bullet_k a
    = ((\omega_j \tens s) \odot_E j^{k-j}(b))\bullet_k a
    = \sum^k_{i=j} ((\omega_j \tens s) \odot_E \nabla^{i-j}b)\bullet_k a\\
    &= \sum^k_{i=j} ((\omega_j \tens s) \odot_E \nabla^{i-j}b) \odot_E j^{k-i}(a)
    = \sum^k_{i=j} (\omega_j \tens s) \odot_E (\nabla^{i-j}b \odot j^{k-i}(a))
    = \sum^k_{i=j} (\omega_j \tens s) \odot_E (\nabla^{i-j}b \bullet_{k-j} a)\\
    &= (\omega_j \tens s) \odot_E (j^{k-j}(b) \bullet_{k-j} a)
    = (\omega_j \tens s) \odot_E j^{k-j}(ba)
    = (\omega_j \tens s) \bullet_k (ab)
\end{align*}
where in the 4th equality we have used that $\odot_E$ is a bimodule action w.r.t. the product $\odot$. The rest of the properties follow in a similar manner. That $\pi_k\colon \CJ^k_E \rightarrow \CJ^{k-1}_E$ is a bimodule surjection which connects $j^k_E$ to $j^{k-1}_E$ is then also clear. From the very definition of $\CJ^k_E$ it is also clear that we have an exact sequence
\[ 0\to \Omega^k_S\tens_AE \to \CJ^k_E{\buildrel \pi_k\over\longrightarrow} \CJ^{k-1}_E\to 0\]
of underlying bimodules.
\end{proof}

As in the $k=1$ case, we want to investigate in what sense the jet bimodules $\CJ^k_E$ are independent of the choice of connections $(\nabla,\sigma)$ on $\Omega^1$ and $(\nabla_E,\sigma_E)$ on $E$. We say that two jet bimodules $\CJ^k_E,\tilde \CJ^k_E$ with repective jet prologantion maps $j^k_E\colon E \to \CJ^k_E$ and $\tilde j^k_E\colon E \to \tilde \CJ^k_E$ and bullet actions $\bullet_k$, $\tilde \bullet_k$are equivalent as jet bimodules if there is a isomorphism $\varphi \in {}_A\Hom_A(\CJ^k_E,\tilde{\CJ}^k_E)$ such that $\varphi \circ j^k_E = \tilde j^k_E$.

In contrast to our result for $k=1$, where $\CJ^1_E$ did not depend on the connection $\nabla_E$ and also not on its braiding $\sigma_E$, in higher degrees we find that $\CJ^k_E$ does depend, but only weakly via the braiding $\sigma_E$ alone. 

\begin{lemma}
Consider $\CJ^k_E, \tilde \CJ^k_E$ as constructed from the connection $(\nabla,\sigma)$ on $\Omega^1$ and from the connections $(\nabla_E, \sigma_E)$, $(\tilde \nabla_E, \sigma_E)$ on $E$ respectively. Then $\CJ^k_E \simeq \tilde \CJ^k_E$ as jet bimodules.
\end{lemma}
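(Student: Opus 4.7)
The plan is to exploit a structural observation that reduces the lemma to a purely formal statement. Inspecting the bullet actions in Theorem~\ref{thm:JkE}, the formula $a \bullet_k (\omega_j \tens s) = \sum_{i=j}^k \nabla^{i-j}a \odot_E (\omega_j \tens s)$ (and symmetrically on the right) is built from the derivatives $\nabla^n a$ of elements of $A$, the braided product $\odot$ on $\Omega_S$ from Corollary~\ref{cor:AlgebraQuantumSymmetricForms}, and the braiding $\sigma_{E,\Omega_S}$ entering $\odot_E$ from Corollary~\ref{cor:OmegaSEbimodule}. All three ingredients depend only on the fixed $\nabla$, $\sigma$ on $\Omega^1$ and on $\sigma_E$, and not on $\nabla_E$ itself. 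Consequently $\CJ^k_E$ and $\tilde \CJ^k_E$ coincide as $A$-bimodules with literally identical $\bullet_k$ actions; only the prolongation maps $j^k_E$ and $\tilde j^k_E$ differ.

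Given this, the goal becomes to exhibit a bimodule automorphism $\varphi$ of this common bimodule satisfying $\varphi \circ j^k_E = \tilde j^k_E$. I would set $p = \pi_1 \circ \pi_2 \circ \cdots \circ \pi_k \colon \CJ^k_E \to E$, the iterated projection onto the degree-zero component, which is a bimodule map as a composition of the bimodule surjections from Theorem~\ref{thm:JkE}. Iterating $\pi_j \circ j^j_E = j^{j-1}_E$ then yields $p \circ j^k_E = p \circ \tilde j^k_E = \id_E$. Next define $\delta = \tilde j^k_E - j^k_E \colon E \to \CJ^k_E$; this is a bimodule map because both $j^k_E$ and $\tilde j^k_E$ are bimodule maps into the same target bimodule, and its image lies in $\bigoplus_{i \ge 1} \Omega^i_S \tens_A E \subseteq \ker p$, so $p \circ \delta = 0$.

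The candidate is $\varphi = \id + \delta \circ p \colon \CJ^k_E \to \CJ^k_E$, a bimodule map as a sum of bimodule maps (the intermediate $E$ carrying its standard $A$-bimodule structure, for which $p$ and $\delta$ are respectively a retraction and a lift). The compatibility $\varphi(j^k_E(s)) = j^k_E(s) + \delta(p(j^k_E(s))) = j^k_E(s) + \delta(s) = \tilde j^k_E(s)$ is immediate. Since $(\delta \circ p)^2 = \delta \circ (p \circ \delta) \circ p = 0$, the map $\varphi$ is invertible with explicit two-sided inverse $\id - \delta \circ p$.

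The main step requiring care is the opening observation that the bullet actions are genuinely independent of $\nabla_E$; once this is confirmed by direct inspection of the formulas in Theorem~\ref{thm:JkE}, the remainder reduces to the formal fact that two bimodule maps $f, g \colon E \to M$ sharing a common retraction $p \colon M \to E$ are related by the automorphism $\id + (g - f) \circ p$ of $M$. This perspective also recovers Lemma~\ref{lemJ1Eisom} as the case $k=1$, where $p$ is just the projection $\CJ^1_E = E \oplus \Omega^1 \tens_A E \to E$ and $\delta(s) = (\tilde \nabla_E - \nabla_E)s$.
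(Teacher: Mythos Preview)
Your proposal is correct and takes essentially the same approach as the paper: the map $\varphi = \id + \delta\circ p$ you write down coincides exactly with the paper's $\varphi(s+\sum_j\omega_j\tens s_j)=s+\tilde j^k_E(s)-j^k_E(s)+\sum_j\omega_j\tens s_j$, and both rest on the same key observation that the bullet actions depend only on $(\nabla,\sigma,\sigma_E)$. Your abstract formulation has the minor advantage of making invertibility transparent via $(\delta\circ p)^2=0$, whereas the paper leaves this implicit.
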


\begin{proof}
We take the map $\varphi\colon \CJ^k_E \to \tilde \CJ^k_E$ defined on $s+\sum^k_{j=1} \omega_j \tens s_j \in \CJ^k_E$ as 
\[
\varphi(s+\sum^k_{j=1} \omega_j \tens s_j ) = s + \tilde j^k_E(s) - j^k_E(s) + \sum^k_{j=1} \omega_j \tens s_j .
\]
As before it automatically fulfils the above commuting diagram as $\varphi \circ j^k_E = \tilde j^k_E$. In this case the bullet $A$-actions on $\CJ^k_E,\tilde \CJ^k_E$ are the same $\bullet_k = \tilde \bullet_k$ as they are both constructed on the same $(\nabla,\sigma)$ and $\sigma_E$. To check that $\varphi$ is indeed a bimodule map we compute
\begin{align*}
&\varphi(a \bullet_k (s+ \sum^k_{j=1} \omega_j \tens s_j)) 
= \varphi(a s + (j^k(a) - a)  \odot_E s + a \bullet_k \sum^k_{j=1}\omega_j \tens s_j)\\
&= as + \tilde j^k_E(as) - j^k_E(as) + (j^k(a) - a)  \odot_E s + a \bullet_k \sum^k_{j=1}\omega_j \tens s_j\\
&= a \bullet_k s + a \bullet_k \tilde j^k_E(s) - a\bullet_k j^k_E(as + \extd a \tens s + a \bullet_k \sum^k_{j=1}\omega_j \tens s_j
= a\tilde \bullet_1\varphi(s+\omega_1 \tens s_1). 
\end{align*}
and similarly for the right action.
\end{proof}

\section{Construction of noncommutative examples}
\label{secex}

For $\CJ^{1}_A$ and $\CJ^1_E$, we need only a differential algebra $(A,\Omega,\extd)$ (plus a connection on $E$ in the vector bundle case) and for $\CJ^2_A$ and $\CJ^2_E$ also a torsion free connection on $\Omega^1$ (and the connection in $E$ should be flat). Thus we can use for $\nabla$ any number of connections in the literature, including the QLCs  of the many known quantum Riemannian geometries as covered  in \cite{BegMa}.

For general $\CJ^k_A$ and $\CJ^k_E$, however, we need a torsion free flat connection $\nabla$ with the extra conditions of being $\wedge$-compatible, extendable and the Leibniz-compatibility condition. In the vector bundle case we also need the connection on $E$ to be flat and the extra conditions of extendable and Leibniz compatible. We also know that Leibniz-compatibility implies the relevant braid relations. In the classical case, all of the extra conditions hold automatically  as $\sigma,\sigma_E$ are the flip map. Hence, classically, we see only the geometric part of this data. This is still a significant restriction as discussed in the Introduction.

An obvious example in the noncommutative case is to assume that $A$ has a central  basis of 1-forms $\{e^i\}_{i=1}^{i=n}$ which are closed and have the usual Grassmann algebra wedge product. This applies for example to the noncommutative torus and many other differential algebras. If we set $\nabla e^i=0$ on the basis then  \begin{align*}
    &\extd a\tens e^i=\nabla(ae^i) = \nabla(e^i a) = \sigma(e^i \tens \extd a)
  \end{align*}
 for all $a\in A$, which means (pulling the coefficients of  $\extd a = \del_i a e^i$ to one side by centrality) that $\sigma={\rm flip}$ on the basis. Sum over repeated indices should be understood here. As we assumed $\extd e^i=0$ then $\nabla$ is torsion free and clearly flat when computed on the basis (which is enough as the torsion and curvature tensors are left module maps). For the other conditions of extendability, $\wedge$-compatibility and Leibniz-compatibility, all of these can likewise can be checked on the basis (moving all coefficients to the left) and then they all obviously hold since $\sigma$ is just the flip map. Hence all conditions of Theorem~\ref{thm:JkA} apply in this easy case. The prolongation map is
  \begin{equation}\label{jbasis} j^\infty(s)=s+(\del_i s)e^i+(\del_j\del_i s)e^j\tens e^i+\cdots.\end{equation}
That the 3rd term here lives in $\Omega^2_S$ follows from $\extd^2 s=0$ because, for the Grassmann algebra, this implies that $\del_j\del_i s=\del_i\del_j s$ while $\Omega^2_S$ has a basis of symmetric tensor products $e_i\tens e_j+e_j\tens e_i$. The bullet bimodule structure for $k=2$, for example, is
\[ a\bullet_2 s=j^2(a)s,\quad a\bullet_2 \omega=a\omega+\extd a\tens\omega+\omega\tens \extd a,\,\quad a\bullet_2(\omega^1\tens\omega^2)=a\omega^1\tens\omega^2\]
for $\omega\in \Omega^1$ and $\omega^1\tens\omega^2\in \Omega^2_S$ and similarly on the other side. Moreover,  the algebra $\Omega_S$ can be identified with $S_\sigma(\Omega^1)$ generated over $A$ by the polynomials $k[e^1,\cdots,e^n]$. $\CJ^\infty_A$ is similar but much bigger in allowing powerseries, and for each $k$ has
bimodule actions transferred from $\bullet_k$.

For more nontrivial examples, we look at the opposite extreme from classical, where $\Omega$ is inner.  This says that there is a $\theta\in \Omega^1$ such that $\extd = [\theta,\,\}$, where $[\,,\,\}$ is the graded commutator. This happens frequently in noncommutative geometry, in spite of no classical analogue. The main theorem we need in this case is a result in \cite{Ma:gra,BegMa} that any bimodule connection on $\Omega^1$ can be constructed as
\[ \nabla= \theta\tens(\ )-\sigma((\ )\tens\theta)+\alpha,\quad \sigma:\Omega^1\tens_A\Omega^1\to \Omega^1\tens_A\Omega^1,\quad \alpha:\Omega^1\to \Omega^1\tens_A\Omega^1\]
for bimodule maps $\sigma,\alpha$ as shown. Then $\sigma$ becomes the generalised braiding. We will say that $\nabla$ is {\em inner} if $\alpha=0$.

\begin{proposition}\label{inner} For an inner $\nabla$ on an inner differential algebra $(A,\Omega,\theta)$:

\begin{enumerate}
\item $\nabla$ has zero torsion iff $\wedge\sigma(\omega\tens\theta)=-\omega\wedge\theta$ for all $\omega\in \Omega^1$.

\item $\nabla$ is $\wedge$-compatible iff the 2nd part of Fig.~\ref{fig1}(c) gives a well-defined map $\sigma_{\Omega^2}$. In this case $\nabla_{\Omega^2}=\theta\tens(\ )-\sigma_{\Omega^2}((\ )\tens\theta)$.

\item If $\nabla$ is extendable then it is flat iff $\sigma(\omega\tens\theta^2)=\theta^2\tens \omega$ for all $\omega\in \Omega^1$.

\item If $\nabla$ is extendable then $R_\nabla$ is a bimodule map.

\item $\nabla$ obeys the Leibniz-compatibility Fig.~\ref{fig1}(e) iff $\sigma$ obeys the braid relations Fig.~\ref{fig1}(g). The latter hold iff $\sigma$ they hold on $\Omega^1\tens_A\Omega^1\tens_A\theta$.
\end{enumerate}
\end{proposition}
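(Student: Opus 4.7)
The plan is to exploit the explicit inner form $\nabla\omega = \theta\tens\omega - \sigma(\omega\tens\theta)$ throughout, since each structure built out of $\nabla$ will collapse to an expression involving only $\sigma$ and wedging with $\theta$, and then read off each condition by comparing with the relevant diagram in Fig.~\ref{fig1}.

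For (1), I would use that on a 1-form the inner derivative reads $\extd\omega = \theta\wedge\omega + \omega\wedge\theta$ (graded commutator with $\theta$), so
\[ T_\nabla(\omega) = \wedge\nabla\omega - \extd\omega = \theta\wedge\omega - \wedge\sigma(\omega\tens\theta) - \theta\wedge\omega - \omega\wedge\theta = -\omega\wedge\theta - \wedge\sigma(\omega\tens\theta),\]
whose vanishing is exactly the stated identity. For (2), I substitute $\nabla\eta$ and $\nabla\omega$ into the right-hand side of the definition of $\nabla_{\Omega^2}$ in Lemma~\ref{nablaOmegaS} and watch that the term $\wedge_2 \sigma_1(\eta\tens\theta\tens\omega)$ coming from $\wedge_2\sigma_1(\eta\tens\nabla\omega)$ exactly cancels the $\sigma(\eta\tens\theta)\wedge\omega$ piece coming from $(\nabla\eta)\wedge\omega$. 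What survives is $\theta\tens(\eta\wedge\omega) - \wedge_2\sigma_1\sigma_2(\eta\tens\omega\tens\theta)$, and the last term is the very formula of $\sigma_{\Omega^2}((\eta\wedge\omega)\tens\theta)$ in Fig.~\ref{fig1}(c); well-definedness of that map on $\Omega^2\tens_A\Omega^1$ is therefore both necessary and sufficient for $\nabla_{\Omega^2}$ to descend to $\Omega^2$, and then $\nabla_{\Omega^2}$ is automatically of the inner form claimed.

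For (3) and (4), I compute $R_\nabla(\omega) = (\extd\tens\id - \id\wedge\nabla)\nabla\omega$ piece by piece using the graded Leibniz rule for the inner $\extd$ on the second slot of $\sigma(\omega\tens\theta)$. A similar telescoping is expected: the $\theta\wedge\rho^1$ and $\rho^1\wedge\theta$ terms arising from $\extd\rho^1$ and from $\id\wedge\nabla$ pair off against each other, leaving only $2\theta^2\tens\omega - \theta^2\tens\omega - \sigma_{\Omega^1,\Omega^2}(\omega\tens\theta^2) = \theta^2\tens\omega - \sigma_{\Omega^1,\Omega^2}(\omega\tens\theta^2)$, where extendability (Fig.~\ref{fig1}(d)) is used to collapse $\wedge_1\sigma_2\sigma_1(\omega\tens\theta\tens\theta)$ into $\sigma_{\Omega^1,\Omega^2}(\omega\tens\theta^2)$. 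From this formula, (3) is immediate, and (4) follows because both $\theta^2\tens(\ )$ and $\sigma_{\Omega^1,\Omega^2}((\ )\tens\theta^2)$ are bimodule maps, so $R_\nabla$ is too.

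For (5), the same kind of collapse shows that the tensor product connection $\nabla_2$ is again inner with braiding $\sigma_1\sigma_2$: the cross term $\sigma_1(\alpha\tens\theta\tens\beta)$ in $\nabla_2(\alpha\tens\beta)$ exactly cancels $\sigma(\alpha\tens\theta)\tens\beta$, giving $\nabla_2 = \theta\tens(\ ) - \sigma_1\sigma_2((\ )\tens\theta)$. Substituting this into Leibniz-compatibility $\nabla_2\sigma_1 = \sigma_2\nabla_2$, the $\theta\tens\sigma(\cdot)$ terms match on both sides, and the remainder reduces to $\sigma_1\sigma_2\sigma_1 = \sigma_2\sigma_1\sigma_2$ applied to tensors of the form $\alpha\tens\beta\tens\theta$. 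This is the first equivalence and also the second, since the three-way loop is closed by Lemma~\ref{lem:nabla3leib}, which already gives Leibniz-compatibility $\Rightarrow$ braid relations on all of $\Omega^{1\tens_A 3}$. The main obstacle throughout will be bookkeeping in (3): one has to track the six terms coming from $(\extd\tens\id - \id\wedge\nabla)$ applied to the two summands of $\nabla\omega$ and check the cancellations, being careful that the use of extendability in Fig.~\ref{fig1}(d) is the only structural input needed beyond the Leibniz rule for the inner $\extd$.
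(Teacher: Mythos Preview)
Your proposal is correct and follows essentially the same strategy as the paper's proof: substitute the inner form $\nabla=\theta\tens(\ )-\sigma((\ )\tens\theta)$ into each defining diagram and watch the terms telescope. Your computations for (1), (2), (3), and (5) match the paper's almost line by line, including the observation that $\nabla_2=\theta\tens(\ )-\sigma_1\sigma_2((\ )\tens\theta)$ is again inner and that Leibniz-compatibility then reduces to the braid relation on $\Omega^1\tens_A\Omega^1\tens_A\theta$, with the loop closed via Lemma~\ref{lem:nabla3leib}.

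The one place you diverge is part~(4). The paper verifies the characterising condition Fig.~\ref{fig1}(f) directly by substituting the inner form and checking that all terms cancel. You instead reuse the explicit formula $R_\nabla(\omega)=\theta^2\tens\omega-\sigma_{\Omega^1,\Omega^2}(\omega\tens\theta^2)$ obtained in~(3) and observe that each summand is manifestly a bimodule map (this uses that $\theta^2$ is central, which follows from $\extd^2=0$; you might state that explicitly). Your route is slightly more economical since it recycles the curvature computation rather than starting a fresh cancellation, though both are short.
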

\begin{proof} (1) This is immediate from putting the form of $\nabla$ into Fig.~\ref{fig1}(a) and $\extd=\{\theta,(\ )\}$ on 1-forms. As in \cite{Ma:gra}, it means that the the second part of (a) is equivalent to torsion freeness in this case. (2) Putting the form of $\nabla$ into Fig.~\ref{fig1}(c) and cancelling immediately gives the stated formula for $\nabla_{\Omega^2}$. Hence in this case, the 2nd part of Fig.~\ref{fig1}(c) implies and is hence equivalent to the first part. (3) Putting in the form of $\nabla$ into Fig.~\ref{fig1}(b) and $\extd=\{\theta,(\ )\}$ gives flatness iff $\theta \wedge \theta \tens \omega = \wedge_1\sigma_2\sigma_1(\omega\tens\theta\tens\theta)$ for all $\omega\in \Omega^1$, as noted in \cite{Ma:gra,BegMa}. In the extendable case, this simplifies as shown. We actually just needed Fig.~\ref{fig1}(d) applied to $\theta\tens\theta$. (4) We put in the form of $\nabla$ and $\extd=\{\theta,(\ )\}$ into Fig.~\ref{fig1}(f) and find in the extendable case that all the terms cancel. We actually just need Fig.~\ref{fig1}(d) applied to $\Omega^1\tens\theta$ and $\theta\tens\Omega^1$. (5) We put in the form of $\nabla$ into Fig.~\ref{fig1}(e) and find that all terms cancel except for two, which are Fig.~\ref{fig1}(g) applied to $\Omega^1\tens_A\Omega^1\tens_A\theta$.  Hence this is equivalent to Leibniz compatible. But if this holds then it implies that $\sigma$ obeys the braid relations on all of $\Omega^{1\tens_A 3}$. \end{proof}

\subsection{The algebra $M_2(\C)$ with its 2D calculus.}\label{sec:M2}

Here we do not study the full moduli of connections since there is an obvious connection that turns out to meet all our criteria, namely the standard QLC for the quantum metric $g=s\tens t-t\tens s$ \cite{BegMa}. One could also analyse the full moduli of allowed $\nabla$ by similar methods.

The standard 2-dimensional $\Omega^1$ for this algebra has a central basis $s,t$ while the exterior algebra is unusual as it obey $s\wedge t=t\wedge s$  (it is commutative!) along with $s^2=t^2=0$ and
\[ \extd s=2\theta \wedge s,\quad\extd t=2\theta \wedge t,\quad \theta=E_{12}s+E_{21}t,\quad \theta^2=s\wedge t,\quad\extd\theta=2s\wedge t.\]
Here $E_{ij}$ is the elementary matrix with $1$ at place $(i,j)$ and zero elsewhere and $\theta$ makes the calculus inner. Then
\[  \nabla s=2\theta\tens s,\quad \nabla t=2\theta\tens t,\quad R_\nabla=T_\nabla=0\]
is the standard connection. It corresponds to $\alpha=0$ and $\sigma=-{\rm flip}$ on the generators and is part of a larger moduli of QLCs for this metric in \cite[Ex 8.13]{BegMa}.

For our extra conditions, since the basis is central, it is enough to check everything on the generators. In this case, as $\sigma$ is just the $- \rm flip$ map it is obvious that $\sigma_{\Omega^2}$ is just the flip map, so any relations among the basic 1-forms are respected and hence the connection is $\wedge$-compatible. Likewise for extendability on the other side.  It is also obvious that $\sigma$ obeys the braid relations and hence the Leibniz-compatibility necessarily holds by Proposition~\ref{inner} but is also easy to check (eg on $s\tens t$, both side of Fig.~\ref{fig1}(e) are $-2\theta\tens t\tens s$.)  So all conditions for Theorem~\ref{thm:JkA} are met.

For the jet bimodule, we find after some computation that
\[ \CJ^\infty_A={\rm span}_{M_2(\C)}\<1,s,t, s\tens s, t\tens t, g, \cdots\>,\]
 where $g=s\tens t-t\tens s$ was the metric we began with (this is quantum symmetric in the sense $\wedge(g)=0$, so lives in $\Omega^2_S$) and the dots denote higher degrees. The map $j^\infty$ can be computed as
\[ j^\infty(a)= a + (\del_s a)s+(\del_t a)t - \{E_{21},\del_s a\} g\]
for all $a\in M_2(\C)$, from which we see that there is nothing in higher degrees. Here $\del_sa=[E_{12},a]$, $\del_ta=[E_{21},a]$ are commutators while the curly brackets denote anticommutator. The last term is $\nabla\extd a$ and can also be written as $+\{E_{12},\del_t a\}g$. Moreover, $\nabla_{2}\nabla\extd a=0$ in part because the connection is a QLC so $\nabla_{2}g=0$. The bullet bimodule actions on lower degree for $\CJ^\infty_A$ (or any $k>4$) come out explicitly as
\begin{align*}
    &a \bullet b = j^\infty(a) b,&
    &b \bullet a = bj^\infty(a) ,\\
    &a\bullet s = as - (\del_ta) g,&
    &s \bullet a = sa + (\del_ta) g,\\
    &a\bullet t = at + (\del_sa) g,&
    &t \bullet a = ta - (\del_sa) g,
\end{align*}
\begin{align*}
    &a\bullet (s\tens s) = (s\tens s) \bullet a = a(s\tens s) + (\del_s a)s\tens s\tens s+ (\del_ta)(s\tens s\tens t- g\tens s)
    -\{E_{21},\del_s a\}g,\\
    &a\bullet (t\tens t) = (t\tens t) \bullet a = a(t\tens t) + (\del_t a)t\tens t\tens t+ (\del_sa)(t\tens t\tens s- t\tens g)
    -\{E_{21},\del_s a\}g,\\
    &a\bullet g = g \bullet a = ag.
\end{align*}

For the algebra of quantum symmetric forms $\Omega_S$ itself, the lower degree products are
\begin{align*}
  &s \odot s = t\odot t = 0,&
  &s\odot t = - t\odot s = g,\\
  &s \odot (s\tens s) = (s \tens s) \odot s = s\tens s\tens s,&
  &t \odot (t\tens t) = (t \tens t) \odot t = t\tens t\tens t,\\
  &t \odot (s\tens s) = (s \tens s) \odot t = s\tens g + t\tens s\tens s = s\tens s \tens t - g \tens s,\\
  &s \odot (t\tens t) = (t\tens t) \odot s =  g\tens t + t\tens t\tens s = s\tens t \tens t - t \tens g,\\
  &s\odot g = g \odot s = t \odot g = g \odot s = 0,\\
  &(s\tens s) \odot (s\tens s) = 2 s\tens s \tens s\tens s,&
  &(t\tens t) \odot (t\tens t) = 2 t\tens t \tens t\tens t,\\
  &(s\tens s) \odot (t\tens t) = (t\tens t) \odot (s\tens s) = s\tens s \tens t\tens t + t\tens t \tens s\tens s - g\tens g,\\
  &g \odot (s\tens s) = (s\tens s)\odot g = s \tens s \tens g + g \tens s \tens s,\\
  &g \odot (t\tens t) = g \odot (t\tens t) = t\tens t \tens g + g \tens t \tens t,\\
  &g\odot g = 0.
\end{align*}

Finally, the braided-symmetric algebra $S_\sigma(\Omega^1)$ in  Section~\ref{secSsigma} is much smaller and can be identified as generated over  $M_2(\C)$ by the Grassmann algebra in $s,t$  (i.e. these now {\em anti}-commute with each other, and commute with elements of $M_2(\C)$). Here $[2,\sigma](s\tens t)=g$ so that the image of $j^k$ lies in the corresponding subalgebra of $\Omega_S$ for all $k$. Hence we have a reduced jet bimodule $\CJ^k_\sigma$ for each $k$, albeit zero for degree 3 and above, and hence
\begin{align*}
&\CJ^\infty_\sigma={\rm span}_{M_2(\C)}\<1,s, t, g\>\subsetneq \CJ^\infty_A,& 
&j^\infty_\sigma=\sum_{i=0}^2d^i,&
&\\  
&d^0(a)=a,& 
&d^1(a)=(\del_s a)s+(\del_ta)t,& 
&d^2(a)= - \{E_{21},\del_s a\}st 
\end{align*}
with the bullet bimodule structure
\begin{align*}
    &a \bullet b = j^\infty_\sigma(a) b,&
    &b \bullet a = bj^\infty_\sigma(a),\\
    &a\bullet s = as - (\del_ta) st,&
    &s \bullet a = sa + (\del_ta) st,\\
    &a\bullet t = at + (\del_sa) st,&
    &t \bullet a = ta - (\del_sa) st,\\
    &a\bullet (st) = (st) \bullet a = ast.
\end{align*}
This is much as before but on our subalgebra.

\subsection{Functions on the group $S_3$}

We take the group $S_3$ of permutations of 3 elements with generators $u=(12)$, $v=(23)$ and set $w=(13)$ so that $uvu=vuv=w$. Its algebra of functions $A=\C(S_3)$ has a standard 3-dimensional $\Omega^1$ with left-invariant basis $e_u,e_v,e_w$ and relations in degree 2 of the exterior algebra
\[  e_a^2=0,\quad e_u\wedge e_v+e_v\wedge e_w+ e_w\wedge e_u=0,\quad  e_v\wedge e_u+e_w\wedge e_v+ e_u\wedge e_w=0 ,\]
\[ \extd e_u=-e_v\wedge e_w-e_w\wedge e_v,\quad \extd e_v=-e_w\wedge e_u-e_u\wedge e_w,\quad\extd e_w=-e_u\wedge e_v-e_v\wedge e_u.\]
The relations with functions are $e_a f=R_a(f)e_a$ where $a=u,v,w$ and $R_a(f)(g)=f(ga)$ denotes the right translation operator. The calculus is inner by $\theta=e_u+e_v+e_w$. Moreover, the products of $u,v,w$ are never in the set $\{u,v,w\}$,  hence there can be no bimodule map $\alpha:\Omega^1\to \Omega^1\tens_A\Omega^1$, as  a bimodule map must respect the commutation relations with functions, which in our case means it must respect the $S_3$-grading where $|e_u|=u$, etc. Hence, all connections $\nabla$ on $\Omega^1$ are inner.

\begin{proposition}\label{propinner} $\Omega(S_3)$ admits precisely two left-invariant flat torsion free connections obeying the braid relations, given by $q=e^{\pm {2\pi\imath\over 3}}$, namely
\[ \nabla e_u={1\over q-1}\left(q e_u\tens e_u+q e_u\tens (e_v+e_w)+q(e_v+e_w)\tens e_u+e_v\tens e_w+e_w\tens e_v+q^{-1}e_v\tens e_v+q^{-1}e_w\tens e_w\right)\]
and the same for a permutation of $u,v,w$. For the braiding, we have
\begin{align*}\sigma(e_u\tens e_u)&={1\over 1-q}\left(e_u\tens e_u+q^{-1}e_v\tens e_v+q^{-1}e_w\tens e_w\right),\\
\sigma(e_v\tens e_v)&={1\over 1-q}\left(q^{-1}e_u\tens e_u+e_v\tens e_v+q^{-1}e_w\tens e_w\right),\\
\sigma(e_w\tens e_w)&={1\over 1-q}\left(q^{-1}e_u\tens e_u+q^{-1}e_v\tens e_v+e_w\tens e_w\right),\\
\sigma(e_u\tens e_v)&={1\over 1-q}\left(qe_u\tens e_v+e_v\tens e_w+e_w\tens e_u\right),& \sigma(e_v\tens e_u)&={1\over 1-q}\left(qe_v\tens e_u+e_w\tens e_v+e_u\tens e_w\right),\\
\sigma(e_v\tens e_w)&={1\over 1-q}\left(qe_v\tens e_w+e_w\tens e_u+e_u\tens e_v\right),&\sigma(e_w\tens e_v)&={1\over 1-q}\left(qe_w\tens e_v+e_u\tens e_w+e_v\tens e_u\right),\\
\sigma(e_w\tens e_u)&={1\over 1-q}\left(qe_w\tens e_u+e_u\tens e_v+e_v\tens e_w\right),&\sigma(e_u\tens e_w)&={1\over 1-q}\left(qe_u\tens e_w+e_v\tens e_u+e_w\tens e_v\right).
\end{align*}
These are extendable,  $\wedge$-compatible and Leibniz-compatible, so all the conditions of Theorem~\ref{thm:JkA} apply.
\end{proposition}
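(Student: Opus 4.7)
The argument is organised in four stages. First, the discussion preceding the proposition shows that every bimodule connection on $\Omega^1(S_3)$ must be inner, since no bimodule map $\alpha:\Omega^1\to\Omega^{1\tens_A 2}$ can respect the $S_3$-grading of the generators (products of $u,v,w$ never lie in $\{u,v,w\}$). Hence $\nabla\omega=\theta\tens\omega-\sigma(\omega\tens\theta)$ for some bimodule map $\sigma$, and the problem reduces to classifying such $\sigma$. Left-invariance of $\nabla$ together with the twist relations $e_af=R_a(f)e_a$ forces $\sigma$ to preserve the $S_3$-grading $|e_a\tens e_b|=ab$ and to act by scalar constants on the left-invariant basis. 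Since the three grading sectors indexed by $e,(123),(132)$ are each three-dimensional, this yields a 27-parameter family of candidate $\sigma$, which I would parametrise exactly as done for $M_2(\C)$ above.

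Second, I would impose torsion freeness, which by Proposition~\ref{inner}(1) reads $\wedge\sigma(e_a\tens\theta)=-e_a\wedge\theta$ for $a\in\{u,v,w\}$. Expanding $\theta=e_u+e_v+e_w$ and using the $e_a^2=0$ relation together with the two cyclic relations in $\Omega^2$ produces a linear system on the 27 coefficients. A key simplification is that $\theta^2=(e_u+e_v+e_w)^2=\sum_{a\ne b}e_a\wedge e_b=0$, since adding the two cyclic relations yields precisely this sum; hence in the extendable case flatness is automatic by Proposition~\ref{inner}(3). Extendability in Fig.~\ref{fig1}(d) and $\wedge$-compatibility in Fig.~\ref{fig1}(c) are then a finite set of grading-respecting identities on the basis, checkable directly and imposing further linear constraints.

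Third comes the main obstacle: the Yang--Baxter equation on $\sigma$. By grading, the braid relations decouple into sectors indexed by the grading of $e_a\tens e_b\tens e_c$ and become polynomial equations on the remaining coefficients. A direct calculation shows that after torsion freeness and the wedge conditions the residual family is essentially one-parameter per orbit under the cyclic symmetry $u\mapsto v\mapsto w$, and the braid relation forces this parameter $q$ to satisfy $q^2+q+1=0$, whose roots are precisely the primitive cube roots of unity $q=e^{\pm 2\pi\imath/3}$. The stated formulas for $\sigma$ on the nine basis tensors and the resulting $\nabla e_u=\theta\tens e_u-\sigma(e_u\tens\theta)$ are then a direct transcription of this solution; the corresponding formulas for $\nabla e_v$ and $\nabla e_w$ follow by the manifest $u\mapsto v\mapsto w$ symmetry of both the calculus and the solution.

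Finally, with the two solutions in hand the additional properties claimed are automatic: Leibniz-compatibility is equivalent to the braid relations already imposed, by Proposition~\ref{inner}(5); $\wedge$-compatibility and extendability were arranged during the classification; $R_\nabla$ being a bimodule map is then guaranteed by Proposition~\ref{inner}(4). Thus all conditions of Fig.~\ref{fig1}(a)--(e) hold and Theorem~\ref{thm:JkA} applies. The principal computational burden is the braid-relation stage, where it is the cubic structure of the equations that singles out $q$ as a primitive cube root of unity.
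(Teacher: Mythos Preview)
Your overall strategy is plausible and the observation that $\theta^2=0$ (adding the two cyclic relations in $\Omega^2$ gives $\sum_{a\neq b}e_a\wedge e_b=0$) is a genuine simplification not exploited in the paper. However, you deploy it in a way that creates a logical gap. The proposition classifies \{torsion-free, flat, braided\} left-invariant connections and asserts there are precisely two; you instead classify \{torsion-free, extendable, $\wedge$-compatible, braided\} connections and then use $\theta^2=0$ with Proposition~\ref{inner}(3) to conclude these are flat. But extendable $\Rightarrow$ flat is only one implication: your scheme does not rule out torsion-free flat braided connections that happen to fail extendability or $\wedge$-compatibility, so the ``precisely two'' claim is not established as stated. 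The paper avoids this by imposing flatness directly, citing \cite[Ex.~3.76]{BegMa} for the torsion-free flat bi-invariant family as a one-parameter family in a parameter $c$, then intersecting with the braid-relation condition (also analysed in \cite{BegMa}, giving $1+3c^2-3c=0$), and only \emph{afterwards} verifying extendability from \cite[Ex.~4.18]{BegMa} and $\wedge$-compatibility by direct computation. Your $\theta^2=0$ observation would be correctly placed at that verification stage: once the two solutions are in hand and extendability is checked, flatness comes for free.

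The computational core of your second and third stages is also only asserted, not carried out. The claims that the linear constraints reduce the 27-parameter family to ``essentially one-parameter per orbit'' and that the braid relation then forces $q^2+q+1=0$ are exactly the content the paper outsources to \cite{BegMa}; a self-contained argument must actually exhibit these reductions. Likewise, $\wedge$-compatibility in the paper is not a linear constraint fed into the classification but a nontrivial check done afterwards on the explicit solution: one computes $\sigma_{\Omega^2}(e_ue_v\tens e_u)$, $\sigma_{\Omega^2}(e_ve_w\tens e_u)$, $\sigma_{\Omega^2}(e_we_u\tens e_u)$ and verifies their sum vanishes using $1+q+q^2=0$, then extends by cyclic symmetry and repeats for the second relation in $\Omega^2$.
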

\begin{proof} We use the analysis of \cite[Ex. 3.76]{BegMa}  for which left and right translation invariant connections on $S_3$ are torsion free and flat to narrow down to a 1-parameter family (case (ii) of the curvature bimodule maps applies) with $e=c-1, d=c, b=1-2c$ and $a=(c-1)(1-2c)/c$ in terms of the parameterisation there. We then intersect this with those that obey the braid relations (case (5) applies) which needs $1+3c^2-3c=0$. This has two solutions,
\[a=c=d=qb=q^{-1}e={1\over 1-q},\quad q=e^{\pm{2\pi\imath\over3}}.\]
We then look in \cite[Ex.~4.18]{BegMa} for which are   extendable and find that these both are. By Proposition~\ref{propinner}, the Leibniz-compatibility property automatically holds.

We also translate the parameterisation  in \cite[Ex.~4.18]{BegMa} into the explicit formulae stated. Then we check  $\wedge$-compatibility. Thus, we would need from the 2nd part of Fig.~\ref{fig1}(c),
\begin{align*} \sigma_{\Omega^2}&(e_u e_v\tens e_u) ={1\over 1-q}\wedge_2(\sigma(e_u\tens (\ ) )\tens\id)(qe_v\tens e_u+e_w\tens e_v+e_u\tens e_w)\\
&={1\over(1-q)^2}(q^2 e_u\tens e_v e_u+q e_v\tens e_we_u+q e_u\tens e_w e_v+ e_v\tens e_ue_v+ e_u\tens e_ue_w+ q^{-1}e_v\tens e_v e_w).
\end{align*}
We similarly find
\begin{align*} \sigma_{\Omega^2}(e_v e_w\tens e_u) ={1\over(1-q)^2}(q^2 e_v\tens e_w e_u+q e_u\tens e_ve_u+q e_v\tens e_u e_v+ e_u\tens e_we_v+ q^2e_u\tens e_ue_w+ e_v\tens e_v e_w),\\
 \sigma_{\Omega^2}(e_w e_u\tens e_u) ={1\over(1-q)^2}(e_u\tens e_v e_u+ e_v\tens e_we_u+q^2 e_u\tens e_w e_v+ q^2e_v\tens e_ue_v+ qe_u\tens e_ue_w+ qe_v\tens e_v e_w).
\end{align*}
Adding these together, we get zero using $1+q+q^2=0$. Moreover, cyclic rotation $u\to v\to w\to u$ of these formulae  generate 6 similar expressions sufficient to similarly conclude that $\sigma_{\Omega^2}((\ )  \tens e_v)$ and $\sigma_{\Omega^2}((\ )  \tens e_w)$ are  well-defined as regards the first relation of $\Omega^2$. The proof for the other relation of $\Omega^2$ is similar. \end{proof}

The map $\sigma$ here is not involutive, i.e. it is strictly braided with eigenvalues $-1,-q^2$ (each 4 times) and $-q$. The jet bimodule is
\[ \CJ^\infty_A={\rm span}_{\C(S_3)}\<1,e_u,e_v,e_w, e_u\tens e_u, e_v\tens e_v,e_w\tens e_w,e_{uv},e_{vu},\cdots\>,\]
\[e_{uv}:=e_u\tens e_v+e_v\tens e_w+e_w\tens e_u,\quad e_{vu}:=e_v\tens e_u+e_w\tens e_v+e_u\tens e_w,\]
where the dots indicate higher order terms. The prolongation map works out as
\[ j^\infty(a)=a+\sum_{i=u,v,w}(\del_i a) e_i+ \sum_{i=u,v,w}D_i(a)e_i\tens e_i+D_{uv}(a)e_{uv}+D_{vu}(a)e_{vu}+\cdots \]
where
\[ \del_i=R_i-\id,\quad D_i={1\over q-1}(q^2(R_u+R_v+R_w)+ R_i+ 4q),\]
\[ D_{uv}=R_{uv}+{1\over q-1}(R_u+R_v+R_w-3q),\quad D_{vu}=R_{vu}+{1\over q-1}(R_u+R_v+R_w-3q)\]
for $i=u,v,w$ and $R_x(a)(y)=a(yx)$ for any function $a\in \C(S_3)$ is the right translation operator. Here $R_{uv}=R_{vw}=R_{wu}$ and $R_{vu}=R_{wv}=R_{uw}$ where products are in the group $S_3$. The $\bullet_k$ actions can be similarly given in such terms.

In terms of the $S_\sigma(\Omega^1)$ algebra, one can compute that in degree 2
\[ \ker(\id+\sigma)={\rm span}_{\C(S_3)}\<e_w\tens e_v-e_u\tens e_w, e_v\tens e_u-e_u\tens e_w, e_v\tens e_w-e_u\tens e_v, e_w\tens e_u-e_u\tens e_v\>.\]
Hence, $S_\sigma(\Omega^1)$  is generated over $A$ by the algebra $S_\sigma(\C^3)$ with at least the quadratic relations $e_we_v=e_ue_w=e_ve_u, e_ve_w=e_ue_v=e_we_u$. (These can be expected to be all the relations.) We recognise this is the braided enveloping algebra of the quandle on $\{u,v,w\}$ regarded as a braided-Lie algebra \cite{BegMa}. Moreover, this is an example where $S^k_\sigma(\Omega^1)\isom \Omega^k_S$ (as it is classically). This is because for $k=2$ the dimension of $S_\sigma(\C^3)$ is 5 and hence $S^2_\sigma(\Omega^1)$ is not only included in but isomorphic  $\Omega^2_S$ via $\id+\sigma$. Hence,  one could also build the jet bimodule essentially on $S_\sigma(\Omega^1)$ at least for each finite $k$ and using the $\bullet_k$ bimodule structure transferred to this.

\subsection{Bicrossproduct Minkowski spacetime in 1+1 dimensions with its 2D calculus}

Another example of interest \cite{BegMa:gra,BegMa} is the 1+1 bicrossproduct model quantum spacetime $[r,t]=\lambda r$ with its 2D calculus
\[  [r,t]=\lambda r,\quad [r,\extd t]=\lambda\extd r,\quad [ t,\extd t]=\lambda\extd t,\quad [r,\extd r]=[t,\extd r]=0,\]
\[ (\extd r)^2=0,\quad \extd r\wedge v+v\wedge \extd r=0,\quad v^2=\lambda v\wedge\extd r,\]
where $\extd r$ and $v=r\extd t-t\extd r$ form a central basis. Here $\extd t,\extd r$ obey the usual exterior algebra relations, resulting in the relations stated in terms of the central basis. The calculus is inner with $\theta=-{1\over\lambda r}(v+t\extd r)$ and one can check that the proposed form of bimodule connection in \cite{BegMa:gra,BegMa}
  \begin{align*}\nabla\extd r&={1\over r}\left(  \alpha v \tens v + \beta v \tens \extd r +  \gamma \extd r \tens v + \delta \extd r\tens \extd r \right),\\
\nabla v&={1\over r}\left(  \alpha' v \tens v + \beta' v \tens \extd r +  \gamma' \extd r \tens v +  \delta' \extd r\tens \extd r  \right),\\
    \sigma(\extd r \tens \extd r) &= \extd r \tens \extd r,\quad \sigma(v \tens \extd r) = \extd r \tens v,\\
    \sigma(\extd r \tens v)& = \lambda \alpha v \tens v + (1+\lambda \beta) v \tens \extd r + \lambda \gamma \extd r \tens v + \lambda \delta \extd r\tens \extd r,\\
    \sigma(v \tens v) &= (1+\lambda \alpha') v \tens v + \lambda \beta' v \tens \extd r + \lambda \gamma' \extd r \tens v + \lambda \delta' \extd r\tens \extd r
  \end{align*}
with $\C$-number coefficients is indeed inner. Hence $\nabla$ is Leibniz-compatible by Proposition~\ref{inner}. This  ansatz is motivated from geometry and sufficient to includes a QLC (which is then unique if we ask for a classical limit)\cite{BegMa:gra}, but here we look more broadly within the same ansatz.

\begin{lemma}\label{ybe} $\sigma$ of the general form of the stated ansatz has four cases where it obeys the braid relations and where the parameters have a classical limit:
\begin{align*}(i):&\quad \alpha= \gamma=0,\quad \alpha'={\beta\gamma'\over\delta},\quad \beta'={\beta\delta'\over\delta},\quad \gamma'=\delta+{\beta\delta'\over\delta};\quad \delta\ne 0,\beta,\delta'\in \C\\
(ii):&\quad \alpha= \delta=\alpha'=\delta'=0,\quad \beta=-\gamma,\quad \gamma'=-\beta';\quad \gamma,\beta'\in \C\\
(iii):&\quad \alpha=\beta=\gamma=\delta=0,\quad  \alpha'={\beta'{}^2\over \delta'},\quad \gamma'=\beta';\quad\delta'\ne 0,\beta'\in \C\\
(iv):&\quad \alpha=\gamma=\delta=\beta'=\gamma'=\delta'=0;\quad \beta,\alpha'\in \C
\end{align*}
\end{lemma}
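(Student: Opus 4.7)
The approach is a direct computation of the Yang--Baxter or braid relation $\sigma_1\sigma_2\sigma_1 = \sigma_2\sigma_1\sigma_2$ applied to each of the eight basis triples of $\Omega^{1\tens_A 3}$ formed from the central basis $\{v,\extd r\}$. The simple action of $\sigma$ on the pairs $\extd r\tens\extd r$ (identity) and $v\tens\extd r$ (the flip $v\tens\extd r \mapsto \extd r\tens v$) means that several triples are moved consistently by both sides of the braid relation with no condition on parameters at all: for instance, one checks immediately that $v\tens\extd r\tens\extd r$, $\extd r\tens v\tens\extd r$ and $v\tens v\tens\extd r$ are automatic. The remaining triples, those for which either $\sigma_1$ or $\sigma_2$ lands on an $\extd r\tens v$ or a $v\tens v$ that then gets acted on again, produce the genuine polynomial constraints in the eight parameters $\alpha,\beta,\gamma,\delta,\alpha',\beta',\gamma',\delta'$.

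Next, I would expand $\sigma_1\sigma_2\sigma_1(\xi)$ and $\sigma_2\sigma_1\sigma_2(\xi)$ for each non-automatic basis triple $\xi$, writing each output in the basis of eight monomials in $\{v,\extd r\}^{\tens 3}$. Equating the two expansions and collecting coefficients gives a finite system of polynomial relations among the parameters, most of them carrying an overall factor of $\lambda$ (or $\lambda^2$). Dividing through by the appropriate power of $\lambda$ is legitimate provided one insists that the parameters remain bounded as $\lambda\to 0$, which is precisely the classical-limit hypothesis and rules out spurious solutions blowing up like $1/\lambda$.

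Then one solves this polynomial system by case analysis. Natural dichotomies such as $\alpha=0$ versus $\alpha\neq 0$, $\delta=0$ versus $\delta\neq 0$, and $\delta'=0$ versus $\delta'\neq 0$ split the solution variety into disjoint components, each of which should collapse to exactly one of the four stated families (i)--(iv). The main obstacle is organising the resulting coupled quadratic system, which comprises roughly a dozen relations in eight unknowns: this bookkeeping is substantial and is most efficiently handled with computer algebra. Once the candidate families are identified, it is straightforward and worthwhile to verify each directly by substitution into the braid relation on the non-automatic triples, and to confirm that in every case the surviving parameters are finite complex numbers, so that the classical limit $\lambda\to 0$ is genuine.
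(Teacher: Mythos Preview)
Your proposal is correct and follows essentially the same approach as the paper: both write out the braid relation on the basis $\{\extd r, v\}$, use the classical-limit hypothesis to exclude parameter values diverging as $\lambda\to 0$, and then solve the resulting polynomial system by case analysis. The paper is slightly more targeted, singling out the equation $\alpha^2(1+\lambda(\alpha'+\beta))=0$ as the one that forces $\alpha=0$ at the outset (since the alternative $\alpha'+\beta=-1/\lambda$ has no classical limit), which streamlines the subsequent case analysis; but this is a matter of organisation rather than a different method.
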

\begin{proof} This is a matter of writing out the YBE for $\sigma$ on the basis $\extd r, v$. One of the equations
is $\alpha^2(1+\lambda(\alpha'+\beta))=0$ which requires $\alpha=0$ or $\alpha'+\beta=-1/\lambda$ and we exclude
the latter case as having no limit as $\lambda\to 0$. We then set $\alpha=0$ and analyse the simplified equations that result.  \end{proof}

Torsion-freeness and flatness/extendability (these turn out to be the same for this form of connection) have already been analysed in \cite{BegMa:bia,BegMa}, so all that remains is to intersect these with the braid relations and check for $\wedge$-compatibility.

\begin{proposition}\label{types12} For the bicrossproduct Minkowski spacetime with its  2D calculus and within the stated ansatz, bimodule connections with a classical limit which are torsion free and obey the braid relations are given by:

\noindent (i) $\alpha=\beta=\gamma=\alpha'=\beta'=0$, $\delta=\gamma'=2$, i.e.
\[\sigma(\extd r \tens v) = v\tens\extd r+2\lambda \extd r \tens \extd r,\quad \sigma(v \tens v) = v\tens v+2\lambda\extd r\tens( v +\delta'\extd r),\quad \delta'\in\C\]
(ii) $\alpha=\beta=\gamma=\delta=\alpha'=\delta'=0$,  $-\beta'=\gamma'=1$, i.e.
\[ \sigma(\extd r \tens v) = v \tens \extd r,\quad \sigma(v \tens v) = v\tens (v-\lambda\extd r)+\lambda\extd r\tens v.\]
These are all extendable,  $\wedge$-compatible and Leibniz-compatible, so all the conditions of Theorem~\ref{thm:JkA} apply.
\end{proposition}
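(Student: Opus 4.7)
The plan follows the outline sketched just before the statement: torsion freeness and flatness/extendability have already been characterised in \cite{BegMa:bia,BegMa}, Lemma~\ref{ybe} has already reduced the braid relations with classical limit to four parameter families, and Proposition~\ref{inner} will make Leibniz compatibility automatic once the braid relations are in hand. What remains is therefore to (a) intersect the braid-relation families (i)--(iv) of Lemma~\ref{ybe} with the torsion-free and flat/extendable locus from \cite{BegMa}, and (b) verify $\wedge$-compatibility on the survivors.

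For step (a) I would take each of the four families in turn, substitute its parameter constraints into $T_\nabla = \wedge\nabla-\extd$ evaluated on the central basis $\extd r,v$, and simplify using the 2-form relations $(\extd r)^2=0$, $\extd r\wedge v+v\wedge \extd r=0$ and $v^2=\lambda v\wedge \extd r$. Each family collapses to a lower-dimensional subspace on which the flatness/extendability conditions from \cite{BegMa} cut down the parameters further. I expect families (iii) and (iv) of Lemma~\ref{ybe} to be killed by one of these intersections or by the classical-limit requirement, while families (i) and (ii) should survive to produce exactly the two cases claimed, with $\delta'$ remaining free in case (i) because it couples only pieces whose 2-form image lies in $\extd r\wedge \extd r=0$.

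For step (b), $\wedge$-compatibility is the condition that the $\sigma_{\Omega^2}$ defined by the second part of Fig.~\ref{fig1}(c) is well defined, equivalently that $\wedge_2\sigma_1\sigma_2(\omega\tens\eta\tens\zeta)$ vanishes whenever $\eta\wedge\zeta=0$. Since $\extd r,v$ form a central basis this reduces to checking the three defining relations of $\Omega^2$ against each tensor triple of basis elements in the first slot, a finite calculation using the explicit $\sigma$ of cases (i) and (ii). Leibniz compatibility then follows from the last part of Proposition~\ref{inner} since the connections in question are inner and their $\sigma$ obey the braid relations, so Theorem~\ref{thm:JkA} applies.

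The main obstacle I anticipate is the $\wedge$-compatibility check in case (ii), where $\sigma(v\tens v)=v\tens(v-\lambda\extd r)+\lambda\extd r\tens v$ has three nonzero components and the relation $v^2=\lambda v\wedge \extd r$ mixes orders of $\lambda$; the required cancellations will only appear after a careful normal-ordering of the 2-form output at the right moment, and I would organise the computation so that the $\lambda^2$ contributions visibly cancel by the same relation used at first order.
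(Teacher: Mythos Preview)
Your plan matches the paper's proof: intersect the torsion-free conditions $\lambda\alpha+\beta=\gamma$, $\lambda\alpha'+\beta'+2=\gamma'$ with the four braid families of Lemma~\ref{ybe} (families (iii) and (iv) then force $\alpha'=-2/\lambda$ and hence have no classical limit, while families (i) and (ii) cut down to exactly the two stated cases, with extendability holding without further restriction by the \cite{BegMa:bia} classification), then check $\wedge$-compatibility by hand on the basis and invoke Proposition~\ref{inner} for Leibniz compatibility. One slip to fix before computing: well-definedness of $\sigma_{\Omega^2}$ requires $\wedge_2\sigma_1\sigma_2(\omega\tens\eta\tens\zeta)=0$ whenever $\omega\wedge\eta=0$ (not $\eta\wedge\zeta=0$), since the domain is $\Omega^2\tens_A\Omega^1$ and the relation sits in the first two input slots.
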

\begin{proof} The moduli of torsion free connections of the general  form of our ansatz is given \cite{BegMa} by
\begin{equation*} \lambda\alpha +\beta=\gamma,\quad \lambda\alpha' +\beta'+2=\gamma' . \end{equation*}
The intersection of this with the braid relations in Lemma~\ref{ybe} leads to the cases (i) and (ii) stated. We will show that the extendability and $\wedge$-compatibility hold for both, so they both meet all the conditions of Theorem~\ref{thm:JkA}.

First, it was shown in \cite{BegMa:bia} that the flat/extendable ones within our ansatz are given by four equations  which, under the assumption of a classical limit, results in 3 cases
\begin{align*}(a):&  \quad \alpha=\beta=\alpha'=\beta'=0,\quad \gamma,\gamma',\delta,\delta'\in \C\\\
(b):&\quad \alpha=\beta=\gamma=\alpha'=0,\quad \gamma'=1+\delta,\quad  \beta'\ne 0, \delta,\delta'\in \C\\
(c):&\quad \alpha'=-\beta,\quad \beta'=-{\beta^2\over\alpha},\quad \gamma'=\delta-1-2{\beta\gamma\over\alpha},\quad \delta'=-{\beta(\alpha+\beta\gamma)\over\alpha^2},\quad \alpha\ne 0,\beta,\gamma,\delta\in\C\end{align*}
(where the analysis in \cite{BegMa:bia} missed case (b) here).  From this, we see that both our solutions  are extendable without further restriction.

For $\wedge$-compatibility, in all cases, from the general form of our ansatz, the flip form of $\sigma((\ )\tens\extd r)$ on the basis means that if we define $\sigma_{\Omega^2,\Omega^1}$ as required then
  \begin{align*}
    \sigma_{\Omega^2,\Omega^1}(\extd r \wedge\extd r \tens\extd r) &=\wedge_2 (\sigma(\extd r\tens(\ ))\tens\id)(\extd r\tens\extd r)= \extd r\tens\extd r\wedge\extd r=0,\\
       \sigma_{\Omega^2,\Omega^1}(v \wedge(v-\lambda\extd r) \tens\extd r) &=\wedge_2 (\sigma(v\tens(\ ))\tens\id)(\extd r\tens(v-\lambda\extd r))= \extd r\tens v\wedge(v-\lambda\extd r)=0,\\
          \sigma_{\Omega^2,\Omega^1}((v \wedge\extd r+\extd r\wedge v) \tens\extd r) &=\wedge_2 (\sigma(v \tens(\ ))\tens\id)(\extd r\tens\extd r)+ \wedge_2 (\sigma(\extd r \tens(\ ))\tens\id)(\extd r\tens v) \\
          & = \extd r\tens(v\wedge \extd r+\extd r\wedge v)=0
  \end{align*}
  so this part is automatic and we only have to check $\sigma_{\Omega^2,\Omega^1}((\ )\tens v)$.

 Next, both our solutions are such that $\sigma(\extd r\tens  X)$ has the form $X'\tens\extd r$ for any linear combination $X$ and some linear combination $X'$ of our generators. Hence in both cases
  \begin{align*}
    \sigma_{\Omega^2,\Omega^1} &(\extd r \wedge \extd r \tens v) = \wedge_2 \sigma(\extd r \tens(\ ))\tens\id)( X \tens \extd r)=  \sigma(\extd r \tens X') \wedge \extd r=0
    \end{align*}
 as $\extd r\wedge\extd r=0$. So both our solutions are compatible with this relation.

Next, for the first solution (i), we have
\begin{align*} \sigma((v-\lambda\extd r)\tens v)&=v\tens (v-\lambda\extd r)+2\lambda\extd r\tens(v+(\delta'-\lambda)\extd r),\\
\sigma_{\Omega^2,\Omega^1}(v \wedge(v-\lambda\extd r) \tens v) &=\wedge_2 (\sigma(v\tens(\ ))\tens\id)(v\tens (v-\lambda\extd r)+2\lambda\extd r\tens(v+(\delta'-\lambda)\extd r)  )\\
&=\sigma(v\tens v)\wedge(v-\lambda\extd r)+2\lambda\extd r\tens v\wedge (v+(\delta'-\lambda)\extd r)\\
&=(v\tens v+ 2\lambda\extd r\tens(v+\delta'\extd r))\wedge (v-\lambda\extd r)+2\lambda \delta' \extd r\tens v\wedge\extd r\\
&=2\lambda\delta'\extd r\tens (\extd r\wedge v+v\wedge\extd r)=0
\end{align*}
using the relations in the exterior algebra. Similarly
\begin{align*} \sigma_{\Omega^2,\Omega^1}((v \wedge\extd r+\extd r\wedge v) \tens v) &=\wedge_2 (\sigma(v \tens(\ ))\tens\id)( v\tens\extd r+2\lambda \extd r \tens \extd r     )\\
&\quad + \wedge_2 (\sigma(\extd r \tens(\ ))\tens\id)(  v\tens v+2\lambda\extd r\tens( v +\delta'\extd r)  ) \\
&=(v\tens v+2\lambda\extd r\tens(v+\delta'\extd r))\wedge\extd r+ 2\lambda\extd r\tens v\wedge\extd r\\
&\quad+(v+2\lambda\extd r)\tens\extd r\wedge v+2\lambda\extd r\tens\extd r\wedge(v+\delta'\extd r)\\
&=(v+ 4\lambda\extd r   )\tens(v\wedge\extd r+\extd r\wedge v)=0
\end{align*}
so this solution is $\wedge$-compatible. For the solution (ii), we have more simply
\begin{align*} \sigma((v-\lambda\extd r)\tens v)&= v\tens (v-2\lambda\extd r)+\lambda\extd r\tens v   , \\
\sigma_{\Omega^2,\Omega^1}(v \wedge(v-\lambda\extd r) \tens v) &=\wedge_2 (\sigma(v\tens(\ ))\tens\id)(   v\tens (v-2\lambda\extd r)+\lambda\extd r\tens v     )\\
&=\sigma(v\tens v)\wedge(v-2\lambda\extd r)+\lambda \extd r\tens v\wedge v\\
&=v\tens (v-\lambda\extd r)\wedge(v-2\lambda\extd r)+\lambda\extd r\tens v\wedge(v-2\lambda\extd r)+\lambda\extd r\tens v^2\\
&=-v\tens(v\wedge \extd r+\extd r\wedge v)+2\lambda\extd r\tens v\wedge (v-\lambda\extd r)=0
\end{align*}
using the relations in the exterior algebra. Similarly
\begin{align*} \sigma_{\Omega^2,\Omega^1}((v \wedge\extd r+\extd r\wedge v) \tens v) &=\wedge_2 (\sigma(v \tens(\ ))\tens\id)( v\tens \extd r     ) + \wedge_2 (\sigma(\extd r \tens(\ ))\tens\id)( v\tens (v-\lambda\extd r)+\lambda\extd r\tens v  ) \\
&=\sigma(v\tens v)\wedge\extd r+v\tens\extd r\wedge(v-\lambda\extd r)+\lambda\extd r\tens\extd r\wedge v\\
&=v\tens v\wedge\extd r+\lambda\extd r\tens v\wedge\extd r+v\tens \extd r\wedge v+\lambda\extd r\tens\extd r\wedge v=0
\end{align*}
so this solution is also $\wedge$-compatible. \end{proof}

The solution (ii) is involutive, $\sigma^2=\id$, hence not strictly braided, while the 1-parameter solution (i) is not involutive and hence is strictly braided when $\lambda\ne 0$. In both cases the eigenvalues are $\pm 1$. More typical solutions in Lemma~\ref{ybe} are strictly braided with eigenvalue that depend on some of the parameters. These could be of interest in other contexts.

For the jet bimodule, we focus on the 1-parameter solution (i). We have
\[ \CJ^\infty_A={\rm span}_A\<1, \extd r, v, \extd r\tens \extd r, v\tens (v-\lambda\extd r), v\tens\extd r+\extd r\tens v,\cdots\>\]
where the dots denote higher order, and
\[ j^\infty(a)=a+(\del_r a)\extd r+(\del_va)v+\nabla \extd a+\cdots,\]
\[ \nabla\extd a=\left(\del_r^2a+(\del_r a){2\over r}+ (\del_v a){\delta'\over r}\right)\extd r\tens\extd r+(\del_v^2a)v\tens (v-\lambda\extd r)+(\del_v\del_r a+\lambda\del^2_v a)(v\tens \extd r+\extd r\tens v)\]
where provided $a(r,t)$ is normal ordered with $t$ to the right,
\[ \del_ra={\del a\over\del r} +(\del_\lambda a)r^{-1} t,\quad \del_va=(\del_\lambda a)r^{-1};\quad \del_\lambda a(r,t)={a(r, t)-a(r,t-\lambda)\over\lambda}\]
and $\del/\del r$ is the usual partial derivative leaving $t$ constant. This is obtained by standard formulae~\cite{BegMa} for $\extd a$ in the $\extd r,\extd t$ basis and re-expressing $\extd t$ in terms of $\extd r, v$. One can check that
\[ \del_r\del_v a+(\del_v a){2\over r}=\del_v\del_r a+\lambda\del^2_v a\]
as needed for the formula for $j^\infty$ to collect as shown. The $\bullet_k$ actions are then determined by the latter.

In terms of $S_\sigma(\Omega^1)$, one can compute in degree 2 and for generic $\lambda,\delta'$ that
\[ {\rm ker}(\id+\sigma)={\rm span}_A\<v\tens\extd r-\extd r\tens v+\lambda\extd r\tens\extd r  \>\]
from which it follows that the braided symmetric algebra $S_\sigma(\Omega^1)$ is spanned over $A$ by the algebra $S_\sigma(\C^2)$ generated by $\extd r,v$ with at least the quadratic relations $[\extd r,v]=\lambda(\extd r)^2$. (These are expected to be all the relations). The image in degree 2 is 3-dimensional over the algebra in agreement with $\CJ_A$ in degree 2.  Thus, this is another example where, for generic parameter values, the braided symmetric algebra and the jet bimodule essentially coincide and one could build the jet bundle theory directly on the former for each degree $k$, but with $\bullet_k$.

\subsection{The quantum group $\C_q[SL_2]$}

In this example we take the quantum group $A = \C_q[SL_2]$ \cite{Ma:book} with $q \in \C^{\times}$ and generators $a,b,c,d$ obeying the relations
\begin{align*}
	&ba = qab,&
	&ca = qac,&
	&db = qbd,&
	&dc = qcd,&
	&da-ad = (q-q^{-1})bc,&
	&bc = cb,&
	&ad - q^{-1}bc = 1.
\end{align*}
We will focus on its 3D calculus with basis \cite{Wor,BegMa}
\begin{align*}
	&e^0 = d\extd a - qb\extd c,&
	&e^+ = q^{-1}a \extd c -q^{-2} c\extd a,&
	&e^- = d\extd b -qb \extd d
\end{align*}
and relations $e^\pm f = q^{|f|}f e^\pm$, $e^0 f = q^{2|f|} fe^0$, for $f \in  \C_q[SL_2]$ an element of homogeneous degree and $|a| = |c| = 1$, $|b| = |d| = -1$. Note that the above definitions imply that $\extd$ acts on the generators as
\begin{align*}
	&\extd a = qb e^+ + ae^0,&
	&\extd b = ae^- -q^{-2} be^0,&
	&\extd c = qd e^+ + ce^0,&
	&\extd d = ce^- - q^{-2}de^0.
\end{align*}

This calculus in \emph{left-invariant} in the sense that the coproduct $\Delta$ induces a ``left coaction'' $\Delta_L \colon \Omega^1 \to  \C_q[SL_2] \tens \Omega^1$ via $\Delta_L \extd = (\id \tens \extd) \Delta$, and can be prolongated by setting 
\begin{align*}
	\label{eq:relationsCqSU2}
	&\extd e^0 = q^3 e^+ \wedge e^-,&
	&\extd e^\pm = \mp q^{\pm2} [2]_{q^{-2}} e^\pm \wedge e^0, \nonumber\\
	&e^\pm \wedge e^\pm = e^0 \wedge e^0 = 0,&
	&q^2 e^+ \wedge e^- + e^- \wedge e^+ = 0,&
	&e^0\wedge e^\pm + q^{\pm4} e^{\pm} \wedge e^0 = 0,
\end{align*}
where $[n]_q = (1-q^n)/(1-q)$ denotes the $q$-integer. When discussing higher tensor powers of these generators, we will use the notation
\(
e^{i_1 \dots i_n}_n \coloneq e^{i_1} \tens \dots \tens e^{i_n} \in (\Omega^1_A)^{\tens_A n}.
\) for $i_j \in \{0,\pm\}$.
 To define the jet bundle of $\C_q[SL_2]$, we will restrict ourselves to \emph{left-invariant} bimodule connections, i.e. connections $\nabla$ satisfying $\Delta_L \nabla = (\id \tens \nabla) \Delta$, where $\Delta_L$ here is the tensor product coaction on $\Omega^1 \tens \Omega^1$. Such connections where investigated in \cite[Example 3.77]{BegMa}. We now build the jet bundle for $\C_q[SL_2]$ with $q=\pm i$. 
\begin{lemma} Left-invariant bimodule connections on $\Omega^1$ which are torsion-free, flat, $\wedge$-compatibility,  extendable and Leibniz compatible exist only for $q=\pm i$, and in this case are characterised by $\nu \in \C$ with
\begin{align*}
	&\nabla e^0 = \nu (e^{+-} - e^{-+}) -q e^{-+},&
	&\nabla e^\pm = 0,&
	&\sigma(e^r \tens e^s) = (-1)^{rs} e^s \tens e^r,
\end{align*}
where $r,s \in \{0,\pm\}$.
\end{lemma}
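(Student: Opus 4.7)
The strategy is to start from the known classification of left-invariant bimodule connections on $\Omega^1$ for $\C_q[SL_2]$ with the 3D calculus, as in \cite[Example 3.77]{BegMa}, and successively cut this finite-parameter moduli space down by the five conditions in the statement. Because the $e^i$ form a left-invariant central-up-to-a-$q$-factor basis, left-invariance forces $\nabla e^i = \Gamma^i_{jk}\, e^{jk}_2$ with scalar coefficients $\Gamma^i_{jk} \in \C$, and the bimodule property then uniquely determines $\sigma$ on the basis in terms of these coefficients and the commutation factors $q^{n_i}$ between $e^i$ and the generators of $A$. So the whole problem reduces to a finite system of polynomial equations in the $\Gamma^i_{jk}$ and $q$.

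First I would impose torsion-freeness by demanding $\wedge \nabla e^i = \extd e^i$ on each basis 1-form, using the explicit $\extd e^0 = q^3 e^+\wedge e^-$ and $\extd e^\pm = \mp q^{\pm 2}[2]_{q^{-2}}\, e^\pm\wedge e^0$ formulas, together with the wedge relations. This is a linear system in the $\Gamma^i_{jk}$. Next I would impose flatness $R_\nabla = 0$, which gives a further quadratic system. At this stage one should recognise the answer as the restriction of the family of torsion-free flat left-invariant connections already identified in \cite[Example 3.77]{BegMa}. I would then impose $\wedge$-compatibility (Fig.~\ref{fig1}(c)) and extendability (Fig.~\ref{fig1}(d)), each of which is a finite check on basis elements once $\sigma$ has been computed.

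The decisive step is the braid relation for $\sigma$, which by Lemma~\ref{lem:nabla3leib} is implied by Leibniz-compatibility. Writing $\sigma(e^r\tens e^s) = \lambda^{rs}_{r's'}\, e^{r's'}_2$, the Yang--Baxter equation on triples $e^r\tens e^s\tens e^t$ becomes a finite collection of scalar equations in $q$ and the surviving parameters of the connection. Here I expect the main obstacle to lie: the mixed triples involving both $e^0$ and $e^\pm$ produce constraints of the form $q^2 + 1 = 0$ (essentially because the left and right actions of $A$ on $e^0$ differ by the factor $q^2$ whereas on $e^\pm$ they differ by $q$, so compatibility of the braided flip on all three forces $q^4 = 1$ and then $q^2 = -1$ to eliminate the $q=\pm 1$ branch which has no solution with a nontrivial $\nabla e^0$). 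This pins down $q = \pm i$ and simultaneously forces $\sigma$ to take the stated graded-flip form $\sigma(e^r\tens e^s) = (-1)^{rs} e^s\tens e^r$ with the convention that the sign is $-1$ precisely when both indices are in $\{+,-\}$.

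Finally I would substitute $q = \pm i$ and the determined $\sigma$ back into the connection ansatz and solve the remaining torsion-free, flat, $\wedge$-compatible equations. A direct computation shows that $\nabla e^\pm = 0$ is forced, whereas $\nabla e^0 = \nu(e^{+-}_2 - e^{-+}_2) - q\, e^{-+}_2$ leaves one free parameter $\nu \in \C$, corresponding to the observation that only the quantum-symmetric part of $\nabla e^0$ is constrained by torsion-freeness and $\wedge$-compatibility, while flatness is automatic because $\nabla e^\pm = 0$ and $\sigma^2 = \id$. Extendability and Leibniz-compatibility are then verified by a direct check on basis elements, using that $\sigma$ is a graded flip and hence manifestly braid-relation-satisfying and extendable through the quadratic relations of $\Omega^2$. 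This completes the classification.
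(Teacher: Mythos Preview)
Your plan is essentially the paper's approach: start from the finite-parameter family of left-invariant bimodule connections in \cite[Example~3.77]{BegMa} and cut it down by the listed conditions. The paper differs only in execution order: it imposes torsion-freeness, flatness and the Yang--Baxter equation simultaneously (by direct computation, in fact via Mathematica) to obtain $\alpha_\pm=\beta_\pm=\gamma=0$, $q=\pm i$, $\mu=-\nu-q$, and only \emph{then} verifies $\wedge$-compatibility, extendability and Leibniz compatibility on the resulting graded-flip $\sigma$. Your sequential ordering is fine in principle, but two of your justifications are not quite right. First, your heuristic for $q^2=-1$ (the mismatch of $q$ versus $q^2$ commutation factors) is not the actual mechanism; the constraint emerges from the joint torsion-free/flat/YBE system rather than from the braid relation on mixed triples alone, so you should expect to need the full computation rather than a clean conceptual shortcut. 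Second, your claim that flatness is ``automatic because $\nabla e^\pm=0$ and $\sigma^2=\id$'' is the right conclusion for the wrong reason: $\sigma^2=\id$ has nothing to do with it; rather, once $\alpha_\pm=\beta_\pm=\gamma=0$ every entry of the curvature matrix vanishes identically in $\mu,\nu$, which is what leaves $\nu$ free. Finally, remember that braid relations are only \emph{implied by} Leibniz compatibility in general, so your final direct check of the latter is not optional.
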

\begin{proof}
A generic left-invariant connection is given by as \cite[Example 3.77]{BegMa}
\begin{align*}
	&\nabla e^0 = \gamma e^0 \tens e^0 + \nu e^+ \tens e^- + \mu e^- \tens e^+,&
	&\nabla e^\pm = \alpha_\pm e^0 \tens e^\pm + \beta_\pm e^\pm \tens e^0
\end{align*}
on the generators, for parameters $ \alpha_\pm,  \beta_\pm, \nu, \mu, \gamma \in \C$. Note that we can write this connection as $\nabla e^i = - \Gamma^i_j \tens e^j$ for the matrix with basis order $e^+, e^0, e^-$
\[
\Gamma = -
\begin{pmatrix}
	\alpha_+e^0 & \beta_+ e^+ & 0 \\
	\mu e^- & \gamma e^0 & \nu e^+\\
	0 & \beta_- e^- & \alpha_- e^0
\end{pmatrix}.
\]
The corresponding map $\sigma$ is computed in \cite[Example 3.77]{BegMa} as well as the torsion
\begin{align*}
	&T_\nabla(e^0) = (\nu -q^2 \mu - q^3)e^+ \wedge e^-,&
	&T_\nabla(e^\pm) = (\beta_\pm - q^{\pm 4} \alpha_\pm \pm q^{\pm 2} (1+q^{-2}) )e^\pm \wedge e^0.
\end{align*}
The new part is to compute the curvature from $R_\nabla = -(\extd \Gamma + \Gamma \wedge \Gamma)$  as
\[ R_\nabla = 
\begin{pmatrix}
	(\alpha_+ q^3 + \beta_+ \mu) e^+ \wedge e^- & 
	\beta_+ (-q^2 [2]_{q^{-2}} -q^4 \alpha_+ + \gamma) e^+ \wedge e^0 & 
	0\\
	\mu(q^{-2} [2]_{q^{-2}} + \alpha_+ - q^{-4} \gamma) e^- \wedge e^0&
	(\gamma q^3 - q^2 \mu \beta_+ + \nu \beta_-) e^+ \wedge e^-&
	\nu(-q^2 [2]_{q^{-2}} - q^4 \gamma + \alpha_-) e^+ \wedge e^0\\
	0 &
	\beta_- (q^{-2} [2]_{q^{-2}} + \gamma - q^{-4} \alpha_-) e^- \wedge e^0&
	\alpha_- q^3 - \beta_- \nu q^2 e^+ \wedge e^-
\end{pmatrix}.
\]
We then use Mathematica to impose flatness, torsion freeness and the YBE to find the joint solution as $\alpha_\pm =  \beta_\pm = \gamma = 0$, $q= \pm i$ and $\mu = -\nu - q$. This then gives the braiding $\sigma(e^r \tens e^s) = (-1)^{rs}e^s \tens e^r$ for  $r,s \in \{0,\pm\}$, meaning that $\sigma$ is the flip map whenever $r=0$ or $s= 0$ and $-\mathrm{flip}$ otherwise. It is therefore clear the it is $\wedge$-compatible and extendable, so only Leibniz compatibility needs to be checked, namely $\sigma_2 \nabla_2 = \nabla_2 \sigma$. Due to $\nabla e^\pm = 0$ this is immediate on $e^r \tens e^s$ for $r,s \in {\pm}$. For the rest this is a matter of straight forward computation, we have for example
\begin{align*}
	&\nabla_2 \sigma(e^{0r}) = \nabla_2 (e^{r0}) = \sigma_1(\nu e^{r+-} + \mu e^{r-+}) = -\nu e^{+r-} - \mu e^{-r+}, \\
	&\sigma_2 \nabla_2 (e^{0r}) = \sigma_2 (\nu e^{+-r} + \mu e^{-+r} ) = -\nu e^{+r-} - \mu e^{-r+}\\
\end{align*}
and similarly on $e^{r0}, e^{00}$.
\end{proof}

Hence by our general results we have  a jet bundle for these values of $q$. To compute the jet bimodule note that the $\wedge$-relations reduce to $e^0 \wedge e^0 = e^\pm \wedge e^\pm = 0$, $e^+ \wedge e^- = e^- \wedge e^+$, $e^0\wedge e^\pm = - e^{\pm} \wedge e^0$ in the $q=\pm i$ case. This is very close to a Grassmann algebra, except for the commutation relations for $e^+$ and $e^-$. The infinite jet bimodule is then spanned by 
\[
\CJ^\infty_{\C_q[SL_2]} = {\rm span}_{\C_q[SL_2]} \< 1, e^0, e^\pm, e^0 \tens e^0, e^{\pm} \tens e^{\pm}, e^{+} \tens e^{-} - e^{-} \tens e^{+}, e^{0} \tens e^{\pm} + e^{\pm} \tens e^{0}, \dots \>
\]
up to degree 2. Higher degrees can be computed by starting with a basis element $e^{i_1\dots i_n}_n$ and adding terms to ensure that it is indeed in the kernel of $\wedge$ at every factor. To analyse the jet prolongation maps $j^n$ let us introduce the following notation for $n>1$
\begin{align*}
&e^{\{\pm 0 \dots 0\}}_n = e^{\pm 0\dots 0}_n + e^{0\{\pm 0 \dots 0\}}_{n}, &
&e^{\{\pm \mp 0 \dots 0\}}_n = e^{\pm \{\mp 0 \dots  0\}}_n - e^{\mp \{\pm 0 \dots  0\}}_n + e^{0 \{\pm \mp 0 \dots  0\}}_n,
\end{align*}
with $e^{\{\pm\}}_1 = e^{\pm}$ and $e^{\{\pm \mp\}}_2 = e^{\pm \mp}_2 - e^{\mp \pm}_2$. These are essentially all the permutations of the sets $\{+0\dots 0\}$ and $\{\pm \mp 0 \dots 0\}$, where in the latter we take into account that when we swap signs, the corresponding basis element gains a $-1$ factor. Note that in both cases we have $e^{\{\pm 0\dots 0\}}_n \sim [n,\sigma]! e^{\pm 0\dots 0}_n$ and  $e^{\{\pm \mp 0\dots 0\}}_n \sim [n,\sigma]! e^{\pm \mp 0\dots 0}_n$, up to some normalisation constant.

\begin{proposition}
The jet prolongation map acts on the generators of $\C_q[SL_2]$ as
\begin{align*}
	&j^n(a) = a \sum^n_{k=1} e^{0\dots 0}_n + qb \sum^n_{k=1} e_n^{\{+0 \dots 0\}} + \nu a \sum^n_{k=2} e_n^{\{+-0 \dots 0\}},\\
	&j^n(b) = b \sum^n_{k=1} e^{0\dots 0}_n + a \sum^n_{k=1} e_n^{\{-0 \dots 0\}} + \nu b \sum^n_{k=2} e_n^{\{-+0 \dots 0\}},\\
	&j^n(c) = c \sum^n_{k=1} e^{0\dots 0}_n + qd \sum^n_{k=1} e_n^{\{+0 \dots 0\}} + \nu c \sum^n_{k=2} e_n^{\{+-0 \dots 0\}},\\
	&j^n(d) = d \sum^n_{k=1} e^{0\dots 0}_n + c \sum^n_{k=1} e_n^{\{-0 \dots 0\}} + \nu d \sum^n_{k=2} e_n^{\{-+0 \dots 0\}}.
\end{align*}
\end{proposition}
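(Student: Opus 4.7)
The four formulas for $a,b,c,d$ are parallel, so I will focus on $a$; the others follow by relabelling, using $\extd b = ae^- + be^0$ and $\extd d = ce^- + de^0$ in place of $\extd a$. The plan is induction on $n$ via the recursive identity $\nabla^{n+1} = \nabla_n \circ \nabla^n$ from Section~\ref{sec:JAgeneral}. First, I observe that $q^{-2} = -1$ for $q = \pm i$ simplifies the differentials to
\[
\extd a = qb\, e^+ + a\, e^0, \quad \extd b = a\, e^- + b\, e^0, \quad \extd c = qd\, e^+ + c\, e^0, \quad \extd d = c\, e^- + d\, e^0.
\]
Since $j^n(a) = \sum_{k=0}^n \nabla^k a$ is a telescoping sum over $k$, it suffices to show by induction that the top-degree component satisfies $\nabla^n a = a\, e^{0\dots 0}_n + qb\, e^{\{+0\dots 0\}}_n + \nu a\, e^{\{+-0\dots 0\}}_n$ for all $n \geq 2$ (with only the first two summands at $n=1$). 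The base cases $n=1,2$ are verified by direct substitution, the second checking that the $-qe^{-+}$ piece of $\nabla e^0$ combines correctly with $qb e^+$ from $\extd a$.

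The inductive step hinges on three identities for the tensor product connection $\nabla_n$ acting on the basis tensors appearing in $\nabla^n a$, all with outputs of length $n+1$:
\begin{align*}
&\nabla_n\, e^{\{+-0\dots 0\}}_n = 0,\\
&\nabla_n\, e^{\{+0\dots 0\}}_n = -\nu\, e^{+\{+-0\dots 0\}}_{n+1},\\
&\nabla_n\, e^{0\dots 0}_n = \nu\bigl(e^{+\{-0\dots 0\}}_{n+1} - e^{-\{+0\dots 0\}}_{n+1}\bigr) - q\, e^{-\{+0\dots 0\}}_{n+1}.
\end{align*}
These follow from $\nabla e^\pm = 0$, $\nabla e^0 = \nu(e^{+-} - e^{-+}) - q e^{-+}$, and the graded-flip form $\sigma(e^r \tens e^s) = (-1)^{|r||s|} e^s \tens e^r$. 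The first is an antisymmetry cancellation: each contribution from applying $\nabla$ at an $e^0$-position in the signed expansion of $e^{\{+-0\dots 0\}}_n$ is matched and annihilated by another of opposite sign. The other two follow by direct expansion of the tensor-product formula for $\nabla_n$, with the braid transports $\sigma_1\cdots\sigma_{j-1}$ acting as pure flips because they transport the new factor past $e^0$-factors only.

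For the inductive step, I apply $\nabla_n$ to $\nabla^n a$ via the bimodule Leibniz rule, yielding six terms. Two cancellations are essential: the $-qa\, e^{-\{+0\dots 0\}}_{n+1}$ from $a\, \nabla_n e^{0\dots 0}_n$ exactly cancels the $qa\, e^{-\{+0\dots 0\}}_{n+1}$ coming from the $ae^-$ summand of $q\, \extd b \tens e^{\{+0\dots 0\}}_n$ (this is where the hypothesis $q^2=-1$ is essential, through the simplified form of $\extd b$); and the $-\nu qb\, e^{+\{+-0\dots 0\}}_{n+1}$ from $qb\, \nabla_n e^{\{+0\dots 0\}}_n$ cancels the $\nu qb\, e^{+\{+-0\dots 0\}}_{n+1}$ from $\nu\, \extd a \tens e^{\{+-0\dots 0\}}_n$. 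The surviving terms reassemble into $a\, e^{0\dots 0}_{n+1} + qb\, e^{\{+0\dots 0\}}_{n+1} + \nu a\, e^{\{+-0\dots 0\}}_{n+1}$ via the recursive definitions
\[
e^{\{+0\dots 0\}}_{n+1} = e^{+0\dots 0}_{n+1} + e^{0\{+0\dots 0\}}_{n+1}, \quad e^{\{+-0\dots 0\}}_{n+1} = e^{+\{-0\dots 0\}}_{n+1} - e^{-\{+0\dots 0\}}_{n+1} + e^{0\{+-0\dots 0\}}_{n+1}.
\]

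The main obstacle is verifying the first key identity $\nabla_n\, e^{\{+-0\dots 0\}}_n = 0$: each of the six signed summands of $e^{\{+-0\dots 0\}}_n$ admits several $e^0$-positions at which $\nabla$ produces $e^{+-}$ and $e^{-+}$ pairs, and tracking the sign changes under the graded-flip transports requires careful combinatorial matching across all twelve resulting terms. A more conceptual approach via the braided-symmetric algebra $S_\sigma(\Omega^1)$ of Section~\ref{secSsigma} and a braided co-Leibniz identity might shorten the argument, but the direct combinatorial route sketched here is explicit and avoids additional structure.
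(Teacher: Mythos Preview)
Your proposal is correct and follows essentially the same route as the paper: induction on $n$ via $\nabla^{n+1}=\nabla_n\nabla^n$, the same three auxiliary identities for $\nabla_n$ on $e^{0\dots0}_n$, $e^{\{+0\dots0\}}_n$, $e^{\{+-0\dots0\}}_n$ (your versions agree with the paper's once one unpacks $\mu=-\nu-q$ and uses $e^{\{-+0\dots0\}}=-e^{\{+-0\dots0\}}$), and the same two cancellations in the induction step. The one methodological difference worth noting is in the identity $\nabla_n e^{\{+-0\dots0\}}_n=0$: you describe a direct combinatorial matching of terms, whereas the paper proves all three auxiliary identities by a nested induction that exploits the recursive definitions $e^{\{+-0\dots0\}}_n=e^{+\{-0\dots0\}}_n-e^{-\{+0\dots0\}}_n+e^{0\{+-0\dots0\}}_n$ together with the splitting $\nabla_n=\nabla\tens\id+\sigma_1(\id\tens\nabla_{n-1})$ and the already-established lower cases; this reduces each identity to a short two- or three-line computation rather than an enumeration of all twelve terms, and you may find it the cleaner way to fill in that step.
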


\begin{proof}
We will focus on the computation of $j^n(a)$, the rest of the jet prolongations can be computed in a similar way. We will do this by induction. For $n=0$, $n=1$ this is clear due to $\extd a = ae^0 + qb e^+$. For $n=2$ we have $j^2(a) = a + \extd a + \nabla \extd a$ and
\begin{align*}
	\nabla \extd a = \nabla (ae^0 + qb e^+) = a e^{00} + qb e^{+0} + \nu a (e^{+-} - e^{-+}) - qa e^{-+} + qb e^{0+} + q a e^{-+}  = ae^{00} + qb e^{\{+0\}} + \nu a e^{\{+-\}}
\end{align*}
showing that the formula indeed holds for $n=2$.
Before doing the computation for any $n$, we need to show some identities, namely
\begin{align*}
	\nabla_n e^{0\dots0}_n 
	&= \sum^{n-1}_{k=0} \sigma_1 \dots \sigma_k e^{0\dots 0}_{k} \tens (\nabla e^0) \tens e^{0\dots 0}_{n-k-1}\\
	&= \nu e^+ \sum^{n-1}_{k=0}e^{0\dots 0}_{k} \tens e^- \tens e^{0\dots 0}_{n-k-1} + \mu e^- \sum^{n-1}_{k=0} e^{0\dots 0}_{k} e^+ e^0_{n-k-1}
	= \nu e^{+\{-0\dots0\}} + \mu e^{-\{+0\dots0\}} 
\end{align*}
which follows directly from the form of $\nabla e^0 = \nu e^{+-} + \mu e^{-+}$ and Equation \eqref{eq:Splitnablan}.
The second identity we will need is $\nabla_n e^{\{+0\dots0\}}_n = \nu e^{+\{-+0 \dots 0\}}_{n+1}$, which we prove by induction. For $n=2$ we have
\begin{align*}
	\nabla_2 (e^{+0} + e^{0+}) = \sigma_1 (\nu e^{++-} + \mu e^{+-+}) + \nu e^{+-+} + \mu e^{-++}
	= \nu (e^{+-+} - e^{++-}) + \mu (e^{-++} - e^{-++}) = \nu e^{+\{-+\}}
\end{align*}
the induction step then follows as
\begin{align*}
	\nabla_n e^{\{+0\dots0\}}_n 
	&= \nabla_n(e^{+0\dots 0}_n + e^{0\{+0\dots0\} }_n)
	= \sigma_1 e^+ \nabla_{n-1} e^{0\dots 0}_{n-1}
	+ (\nabla e^0)e^{\{+0\dots0\}}_{n-1}  + \sigma_1 (e^0 \nabla_{n-1}e^{\{+0\dots0\}}_{n-1}  )\\
	&= - \nu e^{++\{-0\dots0\}}_{n+1} - \mu e^{-+\{+0\dots0\}}_{n+1}
	+ \nu e^{+-\{+0\dots0\}} _{n+1}+ \mu e^{-+\{+0\dots0\}}_{n+1}
	+ \nu e^{+0\{-+0\dots0\}}_{n+1}\\
	&= \nu e^{+\{-+0 \dots 0\}}_{n+1}.
\end{align*}
One can show that $\nabla_n e^{\{-0\dots0\}}_n = \mu e^{-\{+-0 \dots 0\}}_{n+1} $ in a similar manner. The third and last identity we will need is $\nabla_n e^{\{+- 0 \dots 0\}}_n = 0$. This is clearly the case for $n=2$ as $\nabla e^\pm = 0$. The induction step is then 
\begin{align*}
	\nabla_n e^{\{+- 0 \dots 0\}}_n
	&= \nabla_n (e^{+\{-0\dots 0\}}_{n} - e^{-\{+0\dots0\}}_n +e^{0\{+-0\dots 0\}}_n)
	= \sigma_1 e^+ \nabla_{n-1} e^{\{-0\dots 0\}}_{n-1} - \sigma_1 e^-\nabla_{n-1} e^{\{+0\dots0\}}_{n-1} + \nabla e^0 e^{\{+-0\dots 0\}}_{n-1}\\
	&= -\mu e^{-+\{+-0\dots0\}}_{n+1} + \nu e^{+-\{-+0\dots 0\}}_{n+1} + \nu e^{+-\{+-0\dots 0\}}_{n+1} + \mu e^{-+\{+-0\dots 0\}}_{n+1}
	= 0.
\end{align*}
With these 3 identities in hand, we can compute $\nabla^n a$ by induction as
\begin{align*}
	\nabla^n a &= \nabla_{n-1} \nabla^{n-1} a 
	= \nabla_{n-1}(a e^{0\dots0}_{n-1} + qb e^{\{+0\dots 0\}}_{n-1} + \nu a e^{\{+-0\dots\}}_{n-1}) \\
	&= a e^{0\dots0}_{n} + qb e^{+0\dots0}_{n} 
	+ \nu a (e^{+\{-0\dots 0\}}_n
	- e^{-\{+0\dots 0\}}_n) 
	- q a e^{-\{+0\dots 0\}}_n
	+ qb e^{0\{+0\dots 0\}}_{n}+ qa e^{-\{+0\dots 0\}}_{n} \\
	&\,\,\,+  q \nu b e^{+\{-+0\dots0\}}_n 
	+  q \nu a e^{0\{+-0\dots0\}}_n 
	+  q \nu b e^{+\{+-0\dots0\}}_n 
	\\
	&= a e^{0\dots0}_{n} + qb e^{\{+0\dots0\}}_{n} + \nu a e^{\{+-0\dots0\}}_n .
\end{align*}
\end{proof}

In this example we again have $\im \nabla_n \subset \im [n,\sigma]!$ on the generators, which implies $\im \nabla_n \subset \im [n,\sigma]!$ on the whole algebra. To see this take $f,g \in \C_q[SL_2]$ with $\nabla^i f = [i,\sigma]!d^i f \in \im [i,\sigma]!$, $\nabla^j g = [j,\sigma]!d^jg \in \im [j,\sigma]!$, $i,j \leq n$, which in turn implies $\nabla^n (fg) \in \im [n,\sigma]!$ for their product
\[
\nabla^n (fg) = \sum^n_{k=0} \left[{n \atop k},\sigma\right] ([n-k,\sigma]! \tens [k,\sigma]! )d^{n-k} f \tens d^k g = \left[{n},\sigma\right]! \sum^n_{k=0} d^{n-k} f \tens d^k g.
\]
Thus, there is a reduced jet bundle $\CJ^\infty_\sigma = S_\sigma(\Omega^1)$ in this example as well, though it is smaller than the original jet bundle, as one can already see in degree 2, since
\begin{align*}
	&\im (\id + \sigma) = {\rm span}_{\C_q[SL_2]} \< e^0 \tens e^0, e^{+} \tens e^{-} - e^{-} \tens e^{+}, e^{0} \tens e^{\pm} + e^{\pm} \tens e^{0}\>,
\end{align*}
which is 4 dimensional, whereas $\Omega^2_S$ is 6 dimensional as shown above.

We remark that the infinite jet bimodule $\CJ^\infty_{\C_q[SL_2]}$ in this example can also be cast as a braided symmetric algebra $S_{\tilde \sigma}$ but for a different braiding $\tilde \sigma$ obeying the braid relations and defined on the basis as the identity on the diagonal and $\sigma$ otherwise. This is such that $\im [2,\tilde \sigma] = \ker \wedge=\Omega^2_S$, and similarly has $\Omega^n_S = \im [n,\tilde\sigma]!$  at least up to $n=4$ verified by hand.

\subsection{Constructions for $\CJ^3_E$}

Here we note  that our remark about inner calculi also applies for a bimodule connection $\nabla_E$ on any  bimodule $E$, i.e. we can construct all such in the form
\begin{equation}\label{Einner} \nabla_E= \theta\tens(\ )-\sigma_E((\ )\tens\theta)+\alpha_E,\quad \sigma_E\colon E\tens_A\Omega^1\to \Omega^1\tens_AE,\quad \alpha_E\colon E\to \Omega^1\tens_AE.\end{equation}
The proof is analogous to that in \cite{Ma:gra} for $\Omega^1$, namely given a bimodule connection $\nabla_E$, we define
\[ \alpha_E=\nabla_E-\theta\tens(\ )+ \sigma_E((\ )\tens\theta)\]
and deduced from the connection properties that this is a bimodule map. Conversely, given bimodule maps $\sigma,\alpha$ it is easy to see that $\nabla_E$ defined by the formula (\ref{Einner}) has the properties of a bimodule connection. We say that $\nabla_E$ is inner if $\alpha_E=0$.

\begin{proposition} For an inner differential graded algebra $(\Omega,\extd)$, if $\nabla_E$ is an inner bimodule connection then:
\begin{enumerate}
\item The Leibniz-compatibility in Fig.~\ref{fig:J2E}(b) holds iff the coloured braid relations  Fig.~\ref{fig:J2E}(d) hold. The latter hold is iff they  hold on $E\tens\Omega^1\tens\theta$.
\item If $\nabla_E$ is extendable as in Fig.~\ref{fig:J2E}(c) then it is flat iff $\sigma_E((\ )\tens\theta^2)=\theta^2\tens(\ )$.
\item  If $\nabla_E$ is extendable then the bimodule curvature map condition in Fig.~\ref{fig:J2E}(e) holds.
\end{enumerate}
\end{proposition}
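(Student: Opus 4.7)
The plan is to mirror the proof of Proposition~\ref{inner} for the bimodule $E$ case, substituting the inner form $\nabla_E = \theta\tens(\ ) - \sigma_E((\ )\tens\theta)$ into each of the diagrammatic conditions in Fig.~\ref{fig:J2E} and tracking the cancellations. All three items are essentially routine computational analogues of parts (3)--(5) of Proposition~\ref{inner}, with $\sigma,\nabla$ replaced by $\sigma_E,\nabla_E$ where appropriate; the colouring of strands keeps track of which side is $E$ and which is $\Omega^1$.

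For (1), I would expand Leibniz-compatibility $\doublenabla(\sigma_E)=0$ from Fig.~\ref{fig:J2E}(b) by substituting the inner forms of both $\nabla_E$ (acting on the $E$ strand) and $\nabla$ (acting on the $\Omega^1$ strand) coming from the inner DGA, using $\extd=[\theta,\,\}$. The terms coming from $\theta\tens(\ )$ on either side slide past the braid crossings cleanly and cancel, as do the terms where $\sigma_E$ or $\sigma$ meets its inverse pattern, leaving exactly the coloured braid relations of Fig.~\ref{fig:J2E}(d) evaluated on $E\tens\Omega^1\tens\theta$. This gives the stated equivalence. Then, exactly as in the argument closing Proposition~\ref{inner}(5), the fact that this is a right-module-map equation in $\theta$ promotes validity on $E\tens\Omega^1\tens\theta$ to validity on all of $E\tens\Omega^1\tens\Omega^1$, since any $\eta\in\Omega^1$ can be written as $a\extd b=-a[\theta,b\}+\cdots$ modulo terms already covered.

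For (2), I would substitute the inner form into the flatness diagram Fig.~\ref{fig:J2E}(a), $R_E=(\extd\tens\id-\id\wedge\nabla_E)\nabla_E$, using $\extd=\{\theta,(\ )\}$ on $\Omega^1$. A direct expansion produces four groups of terms: $\theta^2\tens(\ )$, two mixed terms of the form $\theta\wedge\sigma_E((\ )\tens\theta)$ and $\wedge_1\sigma_{E,2}(\theta\tens(\ )\tens\theta)$, and $\wedge_1\sigma_{E,2}\sigma_{E,1}((\ )\tens\theta\tens\theta)$. In the extendable case (Fig.~\ref{fig:J2E}(c)), the two mixed terms collapse into $\theta\wedge\theta\tens(\ )$ in a form that cancels one copy of $\theta^2$, so the net statement reduces to $\wedge_1\sigma_{E,2}\sigma_{E,1}((\ )\tens\theta\tens\theta)=\theta^2\tens(\ )$. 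Using extendability once more, this is equivalent to $\sigma_E((\ )\tens\theta^2)=\theta^2\tens(\ )$ as claimed. This is the direct analogue of the calculation cited from \cite{Ma:gra,BegMa} and used in Proposition~\ref{inner}(3).

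For (3), I would substitute the inner form of $\nabla_E$ into the bimodule-curvature condition Fig.~\ref{fig:J2E}(e) characterising when $R_E$ is a right-module map, writing $\extd=\{\theta,(\ )\}$. Each of the expanded terms falls into a matched pair that cancels provided $\sigma_E$ can be commuted past $\theta$ on either side in the appropriate way, which is precisely extendability Fig.~\ref{fig:J2E}(c) applied to $E\tens\theta$ and $\theta\tens E$ (together with the already-available extendability of $\nabla$ on $\Omega^1\tens\theta$ and $\theta\tens\Omega^1$). This is the verbatim analogue of Proposition~\ref{inner}(4). The main technical step, and the only potential obstacle, is keeping track of the colouring of the strands when the wedge product meets a braid crossing, since on the bimodule $E$ we have two distinct braidings $\sigma$ and $\sigma_E$ appearing simultaneously; however, the proof strategy -- expand, use $\extd=\{\theta,(\ )\}$, and cancel using extendability -- is dictated by the scalar case and should go through without any genuinely new ingredient.
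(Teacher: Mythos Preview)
Your proposal is correct and follows essentially the same approach as the paper, which simply states that the proofs ``follow the same steps as for the $\Omega^1$ case in Proposition~\ref{inner}, putting in the form of $\nabla_E$ into the diagrams in Fig.~\ref{fig:J2E} and cancelling terms,'' with $\extd=\{\theta,(\ )\}$ used for parts (2) and (3). One small imprecision: in your part (3), the parenthetical about needing extendability of $\nabla$ on $\Omega^1$ is unnecessary, since condition Fig.~\ref{fig:J2E}(e) involves only $\nabla_E$, $\sigma_E$ and $\extd$, not a connection on $\Omega^1$; only extendability of $\nabla_E$ is required, exactly as in the scalar analogue.
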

\begin{proof} The proofs  follow the same steps as for the $\Omega^1$ case in Proposition~\ref{inner}, putting in the form of $\nabla_E$ into the diagrams in Fig.~\ref{fig:J2E} and cancelling terms. For the last two parts we also use $\extd=\{\theta,(\ )\}$ on 1-forms. \end{proof}

If we want an example, we can of course take $E=\Omega^1$ giving the example $\CJ^k_{\Omega^1}$ as $\nabla$ also has the properties needed for $\nabla_E$  in this case. Note that $\CJ^k_{\Omega^1}\supset \CJ^{k+1}_A$ but is not required to be zero under $\wedge_k$. We  refrain from giving more concrete examples here since $\CJ^k_E$ is not far from $\CJ^k_A$ when $E$ is a free module. Non-free examples are significantly harder to construct and will be studied elsewhere.

\section{Concluding remarks}\label{seccon}

We constructed jet bimodules $\CJ^k_A$ and $\CJ^k_E$ under the assumption of some geometric data on the differential algebra of $A$ and the $A$-bimodule $E$. We also showed that these data a available in a wide variety of $A$, including a discrete space (functions on a finite group Cayley graph), and a matrix algebra, the enveloping algebras of a solvable Lie algebra (a standard quantum spacetime) and, perhaps surprisingly, $\C_q[SL_2]$ at $q$ a fourth root of unity. This also shows in particular that the class of differential algebras equipped with a flat torsion free connection, to which the theory applies, is a rich class of noncommutative geometries. This class also includes fuzzy $\R^3$ (the enveloping algebra $U(su_2)$ as flat quantum spacetime relevant to 3D quantum gravity with its quantum group invariant 4D differential structure) and the bicrossproduct Minkowski spacetime (another enveloping algebra of the solvable type in Section~\ref{secex} but with its quantum Poincar\'e-invariant 5D differential structure). These both have an obvious quantum metric and flat QLC $\nabla$ given by zero on the basis, see \cite{BegMa}, which one can check meets our requirements for Theorems~\ref{thmJ3} and \ref{thm:JkE}. The proof is similar to the easy case at the start of Section~\ref{secex} as the $\extd x^\mu,\theta'$ basis for the bicrossproduct Minkowski model obey the usual Grassmann exterior algebra and $\nabla=0$, $\sigma=\rm flip$ on the basis. The basis now is not central, but the commutation relations are sufficient to move all coefficients in a tensor to the left so that it suffices to check our conditions on the basis. These are already plenty of examples on which it would be interesting to proceed to the Anderson double complex and Lagrangian field theory in a systematic way, which was so far lacking. This will be looked at elsewhere. 

On the mathematical side, a novel feature of our construction was the use of a braiding $\sigma$ obeying the braid relations on $A$-bimodules to construct the algebra $S_\sigma(\Omega^1)$ for the reduced jet bundle in Section~\ref{secSsigma}. Both this and $\Omega_S$ carry the further structure of a braided-Hopf algebra. Indeed, given an $A$-bimodule $\Omega^1$ equipped with invertible $\sigma$ obeying the braid relations (ignoring the rest of the structure, i.e. what follows applies to any $A$-bimodule equipped with a braiding), we have a braided category generated within the monoidal category of $A$-bimodules by $\Omega^1$. The braiding $\sigma$ is extended to tensor products of $\Omega^1$ and their direct sums in the usual way. Within this braided subcategory, we can make the braided shuffle algebra ${\rm Sh}(\Omega^1)$ (this is $T_A(\Omega^1)=\oplus_n\Omega^{1\tens_A n}$ with product given by the braided binomials as in Corollary~\ref{cor:AlgebraQuantumSymmetricForms}) into a braided-Hopf algebra with coproduct
\[ \underline\Delta(\omega^1\tens\cdots\tens\omega^n)=\sum_{k=0}^n(\omega^1\tens\cdots\tens\omega^k)\underline\otimes(\omega^{k+1}\tens\cdots\tens\omega^n)\]
where the underlines denote the braided Hopf algebra structure and the braided tensor product relevant to it. The $k=1,n$ cases should be understood as $1\underline\tens\id$ and $\id\underline\tens1$ respectively. The counit is zero except on degree 0 where it is the identity map. This is the $A$-bimodule version of the construction of the braided shuffle algebra ${\rm Sh}(V)$ over a field in \cite{MaTao:dua}. Our construction restricts to $\Omega_S$ making this a braided-Hopf algebra in the same way assuming the braided category is extended to include such subobjects. Similarly, one can take $T_A(\Omega^1)$ with its tensor product algebra structure, to make it into a braided Hopf algebra by $\underline\Delta \omega=\omega\tens_A 1+1\tens_A \omega$ and counit zero on $\omega\in \Omega^1$. Its extension to $\omega^1\tens\cdots\tens \omega^n$ is then given by the braided binomials as in the original construction \cite{Ma:book,Ma:hod} except that now we work with $A$-bimodules rather than over a field. This is a dual construction to the first one and this time we quotient by $\ker [n,\sigma]!$ as in Section~\ref{secSsigma} to the algebra $S_\sigma(\Omega^1)$ which remains a braided-Hopf algebra assuming our braided category is extended to include such quotients. This amounts to the $A$-bimodule of the construction of braided-linear spaces (or Nichols-Woronowicz algebras) in the approach of \cite{Ma:book,Ma:hod}. In the special case of $A$ a Hopf algebra,  $S_\sigma(\Omega^1)$ is a symmetric version of the Woronowicz exterior algebra \cite{Wor}. 

These braided-Hopf algebra structures on $\Omega_S$ and $S_\sigma(\Omega^1)$ (classically, and often in general,  they are isomorphic) have a classical meaning as follows. Indeed, classically on a manifold $M$, one can think of either of these as sections of the symmetric tensors bundle
\[ {\rm Sym}_{C^\infty(M)}(\Omega^1)= C^\infty_{poly}(T M)\]
where functions $\phi$ are polynomial in the fibre direction. In this case the generalised Hopf algebra structure over $C^\infty(M)$ merely represents pointwise fibre addition $(\Delta \phi)((v,x),(w,x))=\phi((v+w),x)$ for $(v,x)\in T_xM$.
The braided linear space partial derivatives appear as interior product $i_X$ for any vector field $X$, extended to symmetric tensors as a derivation, which in function terms functions amounts to differentiation in the fibre direction. By contrast, the jet bimodule action $\bullet_k$ of $a\in C^\infty(M)$ for each degree $k$ acts on $\phi$ according to derivatives along the base $M$, as does the jet prolongation map.  This gives some insight into the classical meaning of our construction in Section~\ref{sec:JA123} and Section~\ref{sec:JAgeneral}. The case of $\CJ^k_E$ has a little more structure which can be similarly developed. This classical meaning of $\Omega_S$ and $S_\sigma(\Omega^1)$  opens up the possibility of a noncommutative version of Lagrangian mechanics on $TM$, to be developed elsewhere and to which the braided-Hopf algebra structure is more relevant.

In terms of further developments of the general jet bundle, there are two key directions that should be addressed in further work. The first, which would be important for physics is how our constructions interact with $*$-structures needed to ensure unitarity. The natural setting here would be  $A$ a $*$-algebra, $\Omega,\extd$ a $*$-calculus and $\nabla$ $*$-preserving as in \cite{BegMa:rie,BegMa}. Another point of interest is that variational problems as expressed via jet bundles should also relate to geodesics. There is a parallel theory of quantum geodesics based on more general $A$-$B$-bimodule connections (with $B$ the coordinate algebra of the parameter space) and where the covariance of the generalised braiding requires (as for us) that it obeys the braid relations. In this context, compatibility with $*$ was resolved at least in the presence of a twisted trace \cite{BegMa:cur} and it was also shown how this braid condition could be naturally relaxed. 

The second direction for general development would be to generalise our theory to allow our background torsion free connection $\nabla$ to have curvature. This should be possible given that the jet bundle theory is somewhat dual to the algebra of differential operators on a manifold, as explained in  \cite{KraVer}. This is known to be a Hopf algebroid and a  quantum version of it was constructed in the setting of flat bimodule connections in \cite{Gho}, building on the left-module case treated in \cite[Chap~6]{BegMa}. In the latter work it is again useful to have a reference connection, but it does not need to be flat. We also note that there are contexts where a curvature obstruction can be absorbed by introducing nonassociativity, for example on the semiclassical theory of quantum differential structures. Moreover, flatness of the connection only plays a limited role in our constructions (to ensure that $\nabla^n$ lands in the right place in the construction of the prolongation maps) and is not needed for most of the other results in Section~\ref{sec:JAgeneral} at the tensor algebra level. For example, one could  drop the quantum symmetry restriction altogether and work with the braided shuffle algebra ${\rm Sh}(\Omega^1)$ in place of $\Omega_S$, albeit this would not then have the right classical limit. These are some  directions for further work suggested by our results. 

\section*{Declaration}

Data sharing is not applicable as no datasets were generated or analysed during the current study. The authors have no competing interests to declare that are relevant to the content of this article. 

\end{document}